\newtheorem{theorem}{Theorem}[section]
\newtheorem{lemma}[theorem]{Lemma}
\newtheorem{remark}[theorem]{Remark}
\newtheorem{proposition}[theorem]{Proposition}
\newtheorem{corollary}[theorem]{Corollary}
\theoremstyle{definition}
\newtheorem{example}[theorem]{Example}
\newtheorem{definition}[theorem]{Definition}
\newtheorem{question}[theorem]{Open Question}
\newcommand{\bN}{\mathbb{N}}
\newcommand{\bQ}{\mathbb{Q}}
\newcommand{\bZ}{\mathbb{Z}}
\newcommand{\bR}{\mathbb{R}}
\newcommand{\bC}{\mathbb{C}}
\newcommand{\bE}{\mathbb{E}}
\newcommand{\cB}{\mathcal{B}}
\newcommand{\cL}{\mathcal{L}}
\newcommand{\cP}{\mathcal{P}}
\newcommand{\1}{\mathds{1}}
\newcommand{\ii}{\textnormal{i}}
\newcommand{\re}{\mathrm{e}}
\newcommand{\ri}{\mathrm{i}}
\newcommand{\di}{\mathrm{d}}
\newcommand{\law}{\mathcal{L}}
\newcommand{\QID}{\textnormal{QID}}
\DeclareMathOperator{\supp}{supp}
\DeclareMathOperator{\trace}{trace}
\DeclareMathOperator{\qid}{\textnormal{q.i.d.}}
\DeclareMathOperator{\dx}{d\textit{x} \hspace{1pt}}
\DeclareMathOperator{\dz}{d\textit{z} \hspace{1pt}}
\begin{document}

\title{On multivariate quasi-infinitely divisible distributions\thanks{In honour of Ron Doney on the occasion of his 80th birthday. This research was supported by DFG grant LI-1026/6-1. Financial support is gratefully acknowledged.}}

\author{David Berger\thanks{Technische Universit\"at Dresden,
Institut f\"ur Mathematische Stochastik, D-01062 Dresden, Germany, email: david.berger2@tu-dresden.de} \and Merve Kutlu \and Alexander Lindner\thanks{Ulm University, Institute of Mathematical Finance, D-89081 Ulm, Germany;
emails: merve.kutlu@uni-ulm.de,
alexander.lindner@uni-ulm.de}}
\date{\today}
\maketitle

\begin{abstract}
A quasi-infinitely divisible distribution on $\bR^d$ is a probability distribution $\mu$ on $\bR^d$ whose characteristic function can be written as the quotient of the characteristic functions of two infinitely divisible distributions on $\bR^d$. Equivalently, it can be characterised as a probability distribution whose characteristic function has a L\'evy--Khintchine type representation with a \lq\lq signed L\'evy measure\rq\rq, a so called quasi--L\'evy measure, rather than a L\'evy measure. A systematic study of such distributions in the univariate case has been carried out in Lindner, Pan and Sato \cite{lindner}. The goal of the present paper is to collect some known results on multivariate quasi-infinitely divisible distributions and to extend some of the univariate results to the multivariate setting. In particular, conditions for weak convergence, moment and support properties are considered. A special emphasis is put on examples of such distributions and in particular on $\bZ^d$-valued quasi-infinitely divisible distributions.
\end{abstract}

2020 {\sl Mathematics subject classification.}  60E07.\\


\section{Introduction}
The class of L\'evy processes may be considered to be one of the most important classes of stochastic processes. On the one hand, L\'evy processes generalise Brownian motion, stable L\'evy processes or compound Poisson processes, on the other hand they are the natural continuous time analogue  of random walks and as those are interesting both from a purely theoretical point of view, but also from a practical point of view as drivers of stochastic differential equations, similar to time series being driven by i.i.d. noise. Excellent books on L\'evy processes include the expositions by Applebaum \cite{Applebaum2009}, Bertoin \cite{Bertoin1996}, Doney~\cite{Doney2007}, Kyprianou \cite{Kyprianou2014} or Sato \cite{sato}. We do not attempt to summarise Ron Doney's numerous, important and deep contributions to the understanding of L\'evy processes, but confine ourselves to mentioning some of his works on small and large time behaviour of L\'evy processes, such as Bertoin et al. \cite{BertoinDoneyMaller2008}, Doney \cite{Doney2004} or Doney and Maller \cite{DoneyMaller2002}, which have been of particular importance for the research of one of the authors of this article.

The class of $\bR^d$-valued L\'evy processes corresponds naturally to the class of infinitely divisible distributions on $\bR^d$, which is   an important and  well-studied class of distributions, see e.g. \cite{sato} for various of its properties. Infinitely divisible distributions (and hence L\'evy processes) are completely characterised by the L\'evy-Khintchine formula, according to which $\mu$ is infinitely divisible if and only if its
characteristic function $\bR^d \ni z \mapsto \widehat{\mu}(z) = \int_{\bR^d} \re^{\ri \langle z, x\rangle} \, \mu(\di x)$  can be expressed for all $z\in \bR^d$ as
\begin{align} \label{eq-LK}
\widehat{\mu}(z)
= \exp\left(
	\ii \langle \gamma, z \rangle
	- \frac{1}{2} \langle z, Az \rangle
	+ \int_{\bR^d} \left(
		e^{\ii \langle z,x \rangle} - 1
		- \ii \langle z,x \rangle \1_{[0,1]}(|x|)
	\right)\nu(\dx)
\right),
\end{align}
with a symmetric non-negative definite matrix $A \in \bR^{d \times d}$, a constant $\gamma \in \bR^d$ and a L\'evy measure $\nu$ on $\bR^d$, that is, a Borel measure on $\bR^d$ satisfying
$\nu(\{0\}) = 0$ and
$\int_{\bR} (1 \land |x|^2) \nu(\dx) < \infty$.
The triplet $(A,\nu,\gamma)$ is unique and called the \emph{(standard) characteristic triplet} of the infinitely divisible distribution $\mu$. By \eqref{eq-LK}, the characteristic function of an infinitely divisible distribution must obviously be zero-free.

The class of quasi-infinitely divisible distributions on $\bR^d$ is much less known. Their definition is as follows (see Lindner et al. \cite[Remark 2.4]{lindner}):

\begin{definition} \label{def-qid}
A probability distribution $\mu$ on $\bR^d$ is called \emph{quasi-infinitely divisible}, if its characteristic function $\widehat{\mu}$ admits the representation $\widehat{\mu}(z) = \widehat{\mu}_1(z) / \widehat{\mu}_2(z)$ for all $z\in \bR^d$ with infinitely divisible distributions $\mu_1$ and $\mu_2$.
\end{definition}
Rewriting this as $\widehat{\mu}(z) \widehat{\mu}_2(z) = \widehat{\mu}_1(z)$, we see that a probability distribution $\mu$ on $\bR^d$ is quasi-infinitely divisible if and only if there are two infinitely divisible distributions $\mu_1$ and $\mu_2$ on $\bR^d$ such that
$$\mu \ast \mu_2 = \mu_1.$$ So quasi-infinitely divisible distributions arise naturally in the factorisation problem of infinitely divisible distributions, where one of the factors is infinitely divisible, and the other is then necessarily quasi-infinitely divisible. If $\mu, \mu_1,\mu_2$ are related as in Definition~\ref{def-qid}, and if $(A_1,\nu_1,\gamma_1)$ and $(A_2,\nu_2,\gamma_2)$ denote the characteristic triplets of $\mu_1$ and $\mu_2$, respectively, then it is easy to see that the characteristic function $\widehat{\mu}$ of $\mu$ has a L\'evy--Khintchine type representation as in \eqref{eq-LK}, with $\gamma = \gamma_1-\gamma_2$, $A = A_1 - A_2$ and $\nu = \nu_1 - \nu_2$, where in the definition of $\nu$ one has to be a bit careful since $\nu_1(B) - \nu_2(B)$ will not be defined for  Borel sets $B\subset \bR^d$ with $\nu_1(B) = \nu_2(B) = \infty$. It is however defined if $\min \{\nu_1(B), \nu_2(B) \} < \infty$, in particular if $B$ is bounded away from zero. We will formalise this in Definition \ref{qlm} and call $\nu=\nu_1-\nu_2$ a \emph{quasi-L\'evy (type) measure}, so basically a \lq\lq signed\rq\rq~L\'evy measure with some extra care taken for sets that are not bounded away from zero. In Theorem \ref{lk} we shall then give a L\'evy--Khintchine type formula for quasi-infinitely divisible distributions with these quasi-L\'evy (type) measures.

Quasi-infinitely divisible distributions have already appeared (although not under this name) in early works of Linnik \cite{Linnik64}, Linnik and Ostrovski\u{i} \cite{LO77}, Gnedenko and Kolmogorov \cite{GK68} or Cuppens \cite{Cuppens1969,cuppens75}, to name just a few, but a systematic study of them in one dimension was only initiated in \cite{lindner}. The name \lq\lq quasi-infinitely divisible\rq\rq~ for such distributions seems to have been used the first time in Lindner and Sato \cite{LiSa2011}.

The class of quasi-infinitely divisible distributions is larger than it might appear on first sight. For example, Cuppens \cite[Prop. 1]{Cuppens1969}, \cite[Thm.~4.3.7]{cuppens75} showed that a probability distribution on $\bR^d$ that has an atom of mass greater than $1/2$ is quasi-infinitely divisible, and  in \cite[Theorem 8.1]{lindner} it was shown that a distribution supported in $\bZ$ is quasi-infinitely divisible if and only if its characteristic function has no zeroes, which in \cite[Theorem 3.2]{cramerwold} was extended to $\bZ^d$-valued distributions. For example, a binomial distribution $b(n,p)$ on $\bZ$ is quasi-infinitely divisible if and only if $p\neq 1/2$. Quasi-infinite divisibility of one-dimensional distributions of the form $\mu = p \delta_{x_0} + (1-p) \mu_{ac}$ with $p\in (0,1]$ and an absolutely continuous $\mu_{ac}$ on $\bR$ has been characterised in Berger \cite[Thm. 4.6]{berger}.
Further, as shown in \cite[Theorem 4.1]{lindner}, in dimension 1 the class of quasi-infinitely divisible distributions on $\bR$ is dense in the class of all probability distributions with respect to weak convergence. Since there are probability distributions that are not quasi-infinitely divisible, the class of quasi-infinitely divisible distributions obviously cannot be closed.

Recently, applications of quasi-infinitely divisible distributions have been found in physics (Demni and Mouayn \cite{DemniMouayn2015}, Chhaiba et al. \cite{ChhaibaDemniMouayn2016}) and insurance mathematics (Zhang et al. \cite{ZhangLiuLi}). Quasi-infinitely divisible processes and quasi-infinitely divisible random measures and integration theory with respect to them have been considered in Passeggieri \cite{Passeggeri2020a}. Quasi-infinitely divisible distributions have also found applications in number theory, see e.g. Nakamura \cite{Nakamura13, Nakamura15} or Aoyama and Nakamura \cite{AN13}.
We also mention the recent work of Khartov \cite{Khartov2019}, where compactness criteria for quasi-infinitely divisible distributions on $\bZ$ have been derived, and the paper by Kadankova et al. \cite{KadankovaSimonWang2020}, where an example of a quasi-infinitely divisible distribution on the real line whose quasi-L\'evy measure has a strictly negative density on $(a_*,0)$ for some $a_* < 0$ has been constructed.

As mentioned, a systematic study of one-dimensional quasi-infinitely divisible distributions has only been initiated in \cite{lindner}.
The goal of the present paper is to give a systematic account of quasi-infinitely divisible distributions on $\bR^d$. We will collect some known results and also extend some of the one-dimensional results in \cite{berger} and \cite{lindner} to the multivariate setting. To get a flavour of the methods, we have decided to include occasionally also proofs of already known results, such as the previously mentioned result of Cuppens \cite{cuppens75}, according to which a probability distribution on $\bR^d$ with an atom of mass greater than $1/2$ is quasi-infinitely divisible.

The paper is structured as follows. In Section \ref{S2} we will formalise the concept of quasi-L\'evy measures and the L\'evy--Khintchine type representation of quasi-infinitely divisible distributions. We show in particular that the matrix $A$ appearing in the characteristic triplet must be non-negative definite (Lemma \ref{nnd}) and that the introduction of complex quasi-L\'evy type measures and complex symmetric matrices does not lead to new distributions, in the sense that if a probability distribution has a L\'evy--Khintchine type representation with a complex symmetric matrix $A\in \bC^{d\times d}$ and a complex valued quasi-L\'evy measure, then $A\in \bR^{d\times d}$ and $\nu$ is real valued, i.e. a quasi-L\'evy measure (Theorem \ref{clk}). In Section \ref{S3} we give examples of quasi-infinitely divisible distributions, in particular we reprove Cuppens' result (Theorem \ref{cuppens}) and state some of the examples mentioned previously. We also show how to construct multivariate quasi-infinitely divisible distributions from independent one-dimensional quasi-infinitely divisible distributions. Section~\ref{S4} is concerned with sufficient conditions for absolute continuity of quasi-infinitely divisible distributions, by extending a classical condition of Kallenberg \cite[pp.794-795]{kallenberg} for one-dimensional infinitely divisible distributions to multivariate quasi-infinitely divisible distributions (Theorem \ref{abscont}). This condition seems to be new even in the case of multivariate infinitely divisible distributions. Section \ref{S5} is concerned with topological properties of the class of quasi-infinitely divisible distributions, like it being dense with respect to weak convergence in dimension 1. In Section \ref{S6} we give a sufficient condition for weak convergence of quasi-infinitely divisible distributions in terms of their characteristic triplets, and in Section \ref{S7} we consider some support properties. Section \ref{S8} is concerned with moment conditions for quasi-infinitely divisible distributions and formulae for the moments in terms of the characteristic triplet. We end this section by setting some notation.

Throughout, we denote by $\bN=\{1,2,3,\ldots\}$ the natural numbers, by $\bN_0 = \bN \cup \{0\}$ the natural numbers including 0, and by $\bZ, \bQ, \bR$ and $\bC$ the integers, rational numbers, real numbers and complex numbers, respectively. The real part $a$ of a complex number $z=a+b\ri$ with $a,b\in \bR$ is denoted by $a = \Re(z)$, the imaginary part by $b=\Im(z)$, and the complex conjugate by $\overline{z} = a - b \ri$. Vectors in $\bR^d$ will be column vectors, we denote by $\bR^{n\times d}$ the set of all  $n\times d$ matrices with real entries, and the transpose of a vector or matrix $A$ is denoted by $A^T$.
The Euclidian inner product in the $d$-dimensional space $\bR^d$ is denoted by $\langle \cdot, \cdot \rangle$, and the absolute value of $z=(z_1,\ldots, z_d)^T$ by $|z|=(z_1^2 + \ldots + z_d^2)^{1/2}$. For two real numbers $a,b$ we denote the minimum of $a$ and $b$ by $a\wedge b$, and for a set $A$ the indicator function $\omega \mapsto \1_A(\omega)$ takes the value 1 for $\omega \in A$ and 0 for $\omega \notin A$. For two sets $B_1,B_2 \subset \bR^d$ and a vector $b\in \bR^d$ we write  $b + B_1 = \{ b + c: c \in B_1\}$ and $B_1 + B_2 := \{c+d : c\in B_1, d\in B_2\}$.
By a (probability) distribution on $\bR^d$ we will always mean a probability measure on $(\bR^d, \cB(\bR^d))$, where $\cB(\bR^d)$ denotes the Borel-$\sigma$-algebra on $\bR^d$. The Dirac measure at a point $x\in \bR^d$ is denoted by $\delta_x$, the convolution of two distributions $\mu_1$ and $\mu_2$ by $\mu_1\ast \mu_2$, and the product measure of them by $\mu_1\otimes \mu_2$, with $\mu_1^{\ast n}$ and $\mu_1^{\otimes n}$ denoting the $n$-fold convolution and $n$-fold product measure of $\mu_1$ with itself. The characteristic function $\bR^d \ni z \mapsto \int_{\bR^d} \re^{\ri \langle z, x \rangle} \, \mu(\di x)$ of a probability distribution $\mu$ on $\bR^d$ is denoted by $\widehat{\mu}$, the support of a non-negative measure $\nu$ on $\bR^d$  by $\supp (\nu)$, and weak convergence of probability measures is denoted by  $\stackrel{w}{\to}$. The law (or distribution) of an $\bR^d$-valued random vector will be denoted by $\law(X)$, its expectation by $\bE(X)$. By a signed measure on a $\sigma$-algebra we mean a $\sigma$-additive $[-\infty,\infty]$-valued set function that assigns the value 0 to the empty set, and we say that it is finite if it is $\bR$-valued. The restriction of a (signed) measure $\nu$ on a measurable space $(\Omega,\mathcal{F})$ to $\mathcal{A}\subset \mathcal{F}$ is denoted by $\nu_{|\mathcal{A}}$, and if $\mathcal{A}$ is of the form $\mathcal{A} = \{ F \cap A : F \in \mathcal{F}\}$ with some $A\in \mathcal{F}$ we occasionally also write $\nu_{|A}$ rather than $\nu_{\mathcal{A}}$. The support of a signed measure is the support of its total variation measure.

\section{The L\'evy-Khintchine type representation} \label{S2}
\setcounter{equation}{0}

As already mentioned, quasi-infinitely divisible distributions admit a L\'evy-Khintchine representation, with a quasi-L\'evy type measure instead of a L\'evy measure.
A quasi-L\'evy type measure is, in a sense, the difference between two L\'evy measures $\nu_1$ and $\nu_2$ and can be seen as a \lq\lq signed L\'evy measure\rq\rq.
However, this difference is not a signed measure if both, $\nu_1$ and $\nu_2$ are infinite.
On the other hand, for any neighborhood $U$ of $0$, the restrictions of $\nu_1$ and $\nu_2$ to $\bR^d \setminus U$ are finite, so that the difference is a finite signed measure.
The following concept of \cite{lindner} formalizes this statement.

\begin{definition} \label{qlm}
For $r > 0$ let $\cB_r^d \coloneqq \{B \in \cB(\bR^d): B \subset \{x\in \bR^d : |x|\geq r\}\}$ and let $\cB_0^d \coloneqq \cup_{r>0} \cB_r^d$.
Let $\nu: \cB_0^d \to \bR$ be a function such that $\nu_{|\cB_r^d}$ is a finite signed measure for every $r > 0$ and denote the total variation, the positive and the negative part of $\nu_{|\cB_r^d}$ by $|\nu_{|\cB_r^d}|$, $\nu^+_{|\cB_r^d}$ and $\nu^-_{|\cB_r^d}$, respectively.
\begin{enumerate}
\item[(a)] The \emph{total variation} $|\nu|$, the \emph{positive part} $\nu^+$ and the \emph{negative part} $\nu^-$ of $\nu$ are defined as the unique measures on $(\bR^d, \cB(\bR^d))$ that satisfy
\begin{align*}
|\nu|(\{0\})
= \nu^+(\{0\})
= \nu^-(\{0\})
= 0
\end{align*}
as well as
\begin{align*}
|\nu|(B)= |\nu_{|\cB_r^d}|(B), \
\nu^+(B)= (\nu_{|\cB_r^d})^+(B), \ \textrm{ and } \
\nu^-(B)= (\nu_{|\cB_r^d})^-(B)
\end{align*}
for all $B \in \cB_r^d$ and $r>0$.

\item[(b)] A function $f: \bR^d \to \bC$ is said to be \emph{integrable} with respect to $\nu$, if $f$ is integrable with respect to $|\nu|$, i.e. if $f$ is integrable with respect to $\nu^+$ and $\nu^-$.
In this case, we define
\begin{align*}
\int_{\bR^d} f(x) \nu(\dx)
\coloneqq \int_{\bR^d} f(x) \nu^+(\dx) - \int_{\bR^d} f(x) \nu^-(\dx).
\end{align*}

\item[(c)] $\nu$ is called a \emph{quasi-L\'evy type measure} on $\bR^d$, if the mapping $\bR^d \to \bR$, $x \mapsto 1 \land |x|^2$ is integrable with respect to $\nu$.
\end{enumerate}
\end{definition}

It is easy to see that the measures $|\nu|, \nu^+$ and $\nu^-$ defined in Definition \ref{qlm} are well-defined, where the uniqueness is observed by the condition that the point $0$ gets assigned no mass.
Note that the mapping $\nu$ itself is defined on $\cB_0^d$, which is not a $\sigma$-algebra, and hence $\nu$ is no signed measure.
But whenever $\nu$ has an extension on $\cB(\bR^d)$ which is a signed measure, we will identify $\nu$ with this extension and speak of $\nu$ as a signed measure.
Moreover, for two L\'evy measures $\nu_1$ and $\nu_2$ on $\bR^d$, the mapping $\nu \coloneqq (\nu_1)_{|\cB_0^d} - (\nu_2)_{|\cB_0^d}$ is obviously a quasi-L\'evy type measure on $\bR^d$.

Next, we give the L\'evy-Khintchine type representation for quasi-infinitely divisible distributions, which we immediately state for general representation functions: by a \emph{representation function} on $\bR^d$ we mean a bounded, Borel measurable function $c: \bR^d \to \bR^d$ that satisfies $\lim_{x \to 0}|x|^{-2}|c(x)-x| = 0$. The representation function $\bR^d \ni x\mapsto  x \1_{[0,1]}(|x|)$ is called the \emph{standard representation function}. It is well known that for every fixed representation function $c$, a probability distribution $\mu$ is infinitely divisible if and only if its characteristic function has a representation as in \eqref{lkf} below with $A\in \bR^{d\times d}$ being non-negative definite, $\gamma\in \bR^d$ and $\nu$ being a L\'evy measure on $\bR^d$; Equation \eqref{eq-LK} then corresponds to the use of the standard representation function. The triplet $(A,\nu,\gamma)$ is then unique and called the characteristic triplet of $\mu$ with respect to the representation function $c$, also denoted by $(A,\nu,\gamma)_c$, cf. \cite[Sect. 56]{sato}. Observe that only the location parameter $\gamma$ depends on the specific choice of the representation function. For the standard representation function we get the standard characteristic triplet. Let us now come to the L\'evy--Khintchine type representation of quasi-infinitely divisible distributions. This has already been observed in \cite[Rem. 2.4]{lindner}, but we have decided to give the proof in detail.

\begin{theorem} \label{lk}
Let $c$ be a representation function on $\bR^d$.
A probability distribution $\mu$ on $\bR^d$ is quasi-infinitely divisible if and only if its characteristic function $\widehat{\mu}$ admits the representation
\begin{align} \label{lkf}
\widehat{\mu}(z)
= \exp\left(
	\ii \langle \gamma, z \rangle
	- \frac{1}{2} \langle z, Az \rangle
	+ \int_{\bR^d} \left(
		e^{\ii \langle z,x \rangle} - 1
		- \ii \langle z, c(x) \rangle
	\right)\nu(\dx)
\right)
\end{align}
for all $z \in \bR^d$ with a symmetric matrix $A \in \bR^{d \times d}$, a constant $\gamma \in \bR^d$ and a quasi-L\'evy type measure $\nu$ on $\bR^d$.
In this case, the triplet $(A,\nu,\gamma)$ in the representation \eqref{lkf} of $\widehat{\mu}$ is unique.
\end{theorem}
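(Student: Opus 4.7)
The plan is to prove both implications and then uniqueness, each time reducing to the corresponding statement for honest infinitely divisible distributions. For the forward direction, I would write $\widehat{\mu}=\widehat{\mu}_1/\widehat{\mu}_2$ with $\mu_1,\mu_2$ infinitely divisible and denote their characteristic triplets with respect to $c$ by $(A_j,\nu_j,\gamma_j)_c$. Since both $\widehat{\mu}_j$ are zero-free, inserting the L\'evy--Khintchine formulae and dividing yields \eqref{lkf} for $\widehat{\mu}$ with $A=A_1-A_2$ (still symmetric), $\gamma=\gamma_1-\gamma_2$, and $\nu\coloneqq(\nu_1-\nu_2)_{|\cB_0^d}$. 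To see that $\nu$ is a quasi-L\'evy type measure, I would note that on each $\cB_r^d$ both $(\nu_j)_{|\cB_r^d}$ are finite, so $\nu_{|\cB_r^d}$ is a finite signed measure, and $\int(1\wedge|x|^2)\,|\nu|(\dx)\leq\sum_{j=1}^{2}\int(1\wedge|x|^2)\,\nu_j(\dx)<\infty$.

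For the converse, given \eqref{lkf} I would form the Jordan-type decomposition $\nu=\nu^+-\nu^-$ from Definition~\ref{qlm}; both $\nu^{\pm}$ are honest L\'evy measures because $\int(1\wedge|x|^2)\,\nu^{\pm}(\dx)\leq\int(1\wedge|x|^2)\,|\nu|(\dx)<\infty$. I would then choose $t>0$ large enough that $A_1\coloneqq A+tI_d$ is non-negative definite, set $A_2\coloneqq tI_d$, $\gamma_1\coloneqq\gamma$, $\gamma_2\coloneqq 0$, and let $\mu_1,\mu_2$ be the infinitely divisible distributions with standard triplets $(A_1,\nu^+,\gamma_1)_c$ and $(A_2,\nu^-,\gamma_2)_c$, respectively. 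A direct computation then gives $\widehat{\mu}_1/\widehat{\mu}_2=\widehat{\mu}$, so $\mu$ is quasi-infinitely divisible.

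For uniqueness, first note that \eqref{lkf} forces $\widehat{\mu}$ to be zero-free. If two triplets $(A,\nu,\gamma)$ and $(A',\nu',\gamma')$ yield the same $\widehat{\mu}$ via \eqref{lkf}, then the two continuous L\'evy--Khintchine exponents $\psi$ and $\psi'$ both vanish at the origin and satisfy $\exp(\psi-\psi')\equiv 1$; since $\psi-\psi'$ is continuous and $2\pi\ii\bZ$-valued with value $0$ at $z=0$, I would conclude $\psi\equiv\psi'$ on $\bR^d$. Then I would decompose $\nu-\nu'=\tilde\nu^+-\tilde\nu^-$ into Jordan parts (which are honest L\'evy measures) and $A-A'=B_+-B_-$ with $B_{\pm}$ symmetric non-negative definite, again by an $I_d$-shift, and rearrange the identity $\psi=\psi'$ by moving the negative contributions to the opposite side. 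This produces an equality of two L\'evy--Khintchine exponents of genuine infinitely divisible distributions with standard triplets $(B_+,\tilde\nu^+,\gamma-\gamma')_c$ and $(B_-,\tilde\nu^-,0)_c$. The classical uniqueness of the infinitely divisible triplet then forces $B_+=B_-$ (so $A=A'$), $\tilde\nu^+=\tilde\nu^-$, and $\gamma=\gamma'$; the mutual singularity of the Jordan parts forces $\tilde\nu^{\pm}=0$, i.e.\ $\nu=\nu'$. I expect this last step to be the main obstacle: one must carefully pair positive and negative contributions of both the signed matrix $A-A'$ and the signed quasi-L\'evy measure $\nu-\nu'$ in order to reduce the question to the classical uniqueness of infinitely divisible characteristic triplets.
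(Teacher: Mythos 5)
Your proposal is correct and follows essentially the same route as the paper: both implications are reduced to the classical infinitely divisible case by splitting positive and negative parts, and uniqueness is obtained from the distinguished logarithm together with the classical uniqueness of the L\'evy--Khintchine triplet. The only (harmless) cosmetic differences are that you make $A$ non-negative definite by adding $tI_d$ where the paper diagonalises $A$ into $A^+-A^-$, and in the uniqueness step you Jordan-decompose the difference $\nu-\nu'$ (using mutual singularity at the end) where the paper decomposes each triplet separately and rearranges.
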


\begin{proof}
It is clear that if $\mu$ is quasi-infinitely divisible with $\widehat{\mu}  = \widehat{\mu}_1 / \widehat{\mu}_2$ and $\mu_1,\mu_2$ being infinitely divisible with characteristic triplets $(A_i,\nu_i,\gamma_i)_c$, $i=1,2$, then $\widehat{\mu}$ has  the representation \eqref{lkf} with $A=A_1-A_2$, $\nu= \nu_1 - \nu_2$ and $\gamma=\gamma_1-\gamma_2$. Conversely, let $\mu$ be a probability distribution whose characteristic function $\widehat{\mu}$ admits the representation
 \eqref{lkf} with a symmetric matrix $A \in \bR^{d \times d}$, a constant $\gamma \in \bR^d$ and a quasi-L\'evy measure $\nu$ on $\bR^d$. Since $A\in \bR^{d\times d}$ is symmetric, we can write $A=A^+ - A^-$ with non-negative symmetric matrices $A^+,A^-\in \bR^{d\times d}$, which can be seen by diagonalising $A$, splitting the obtained diagonal matrix into a difference of two diagonal matrices with non-negative entries, and then transforming these matrices back.
Let $\mu_1$ and $\mu_2$ be infinitely divisible distributions with characteristic triplets $(A^+, \nu^+, \gamma)$ and $(A^-, \nu^-, 0)$, respectively.
Then $\mu_2 \ast \mu = \mu_1$, so that $\mu$ is quasi-infinitely divisible.

The uniqueness of the triplet is proved in Cuppens \cite[Thm. 4.3.3]{cuppens75} or also Sato \cite[Exercise 12.2]{sato}, but for clarity in the exposition we repeat the argument.
So let $(A_1,\nu_1,\gamma_1)$ and $(A_2,\nu_2,\gamma_2)$ be two triplets satisfying \eqref{lkf}. Defining
\begin{align*}
\Psi_j: \bR^d \to \bR,
	\quad z \mapsto \ii \langle \gamma_j, z \rangle
	-\frac{1}{2} \langle z,A_jz \rangle
	+\int_{\bR^d} \left(
		e^{\ii \langle z,x \rangle } -1
		-\ii \langle z,c(x) \rangle
	\right) \nu_j(\dx)
\end{align*}
for $j\in \{1,2\}$, it is easily seen that both $\Psi_1$ and $\Psi_2$ are continuous with $\Psi_j(0) = 0$, implying $\Psi_1 = \Psi_2$ by the uniqueness of the distinguished logarithm, cf. \cite[Lem. 7.6]{sato}.
As before we can find symmetric non-negative definite matrices $A_1^+, A_1^-, A_2^+$ and $A_2^-$ such that $A_1 = A_1^+-A_1^-$ and $A_2 = A_2^+-A_2^-$.
Therefore, the equation $\Psi_1 = \Psi_2$ can be rewritten to
\begin{align*}
	&\ii \langle \gamma_1, z \rangle
	-\frac{1}{2} \langle z,(A_1^+ + A_2^-)z \rangle
	+\int_{\bR^d} \left(
		e^{\ii \langle z,x \rangle } -1
		-\ii \langle z,c(x) \rangle
	\right) (\nu_1^+ +\nu_2^-)(\dx) \\
	& \quad =\ii \langle \gamma_2, z \rangle
	-\frac{1}{2} \langle z,(A_2^+ + A_1^-)z \rangle
	+\int_{\bR^d} \left(
		e^{\ii \langle z,x \rangle } -1
		-\ii \langle z,c(x) \rangle
	\right) (\nu_2^+ + \nu_1^-)(\dx)
\end{align*}
for all $z \in \bR^d$.
By the uniqueness of the L\'evy-Khintchine representation of infinitely divisible distributions (e.g. \cite[Thm. 8.1]{sato}), it follows that $\gamma_1 = \gamma_2$, $A_1^+ + A_2^- = A_2^+ +A_1^-$ and $\nu_1^+ +\nu_2^- = \nu_2^+ + \nu_1^-$, which implies that $A_1 = A_2$ and $\nu_1 = \nu_2$ (observe that $\nu_1^+,\nu_1^-,\nu_2^+$ and $\nu_2^-$ are all finite on $\cB_0^d$).
\end{proof}

\begin{definition} \label{triplet}
Let $c$ be a representation function on $\bR^d$. For a quasi-infinitely divisible distribution $\mu$ on $\bR^d$, the representation of $\widehat{\mu}$ in \eqref{lkf} is called the \emph{L\'evy-Khintchine representation} of $\mu$ and the function $\Psi_\mu : \bR^d \to \bC$ given by
\begin{align*}
	\Psi_\mu (z)
	\coloneqq \ii \langle \gamma, z \rangle
	-\frac{1}{2} \langle z,Az \rangle
	+\int_{\bR^d} \left(
		e^{\ii \langle z,x \rangle } -1
		-\ii \langle z,c(x) \rangle
	\right) \nu(\dx)
\end{align*}
for all $z \in \bR^d$ is called the \emph{characteristic exponent} of $\mu$.
The triplet $(A, \nu, \gamma)$ is called the \emph{generating triplet} or \emph{characteristic triplet} of $\mu$ \emph{with respect to} $c$ and denoted by $(A, \nu, \gamma)_c$.
The matrix $A$ is called the \emph{Gaussian covariance matrix} of $\mu$, the mapping $\nu$ the \emph{quasi-L\'evy measure} of $\mu$ and the constant $\gamma\in \bR^d$ the \emph{location parameter} of $\mu$ with respect to $c$. When $d=1$ we also speak of $A$ as the \emph{Gaussian variance} of $\mu$.
We write $\mu \sim \qid(A, \nu, \gamma)_c$ to state that $\mu$ is a quasi-infinitely divisible distribution with characteristic triplet $(A, \nu, \gamma)$ with respect to $c$. If $c$ is the standard representation function, then $(A, \nu, \gamma)_c$ is also called the \emph{(standard) characteristic triplet} of $\mu$ and is denoted by $(A, \nu, \gamma)$, omitting the index $c$.
\end{definition}

\begin{remark} \label{rem-triplet}
{\rm
(a) It is easily seen that the Gaussian covariance matrix and the quasi-L\'evy measure of a quasi-infinitely divisible distribution do not depend on the specific representation function, but the location parameter does.\\
(b) It is well known that the right-hand side of \eqref{lkf} defines the characteristic triplet of a probability distribution $\mu$ for all $\gamma\in \bR^d$, all L\'evy measures $\nu$ on $\bR^d$ and all non-negative definite symmetric $A\in \bR^{d\times d}$, in which case $\mu$ is necessarily infinitely divisible. It is however not true that the right-hand side of \eqref{lkf} defines the characteristic function of a probability distribution for all $\gamma\in \bR^d$, symmetric matrices $A\in \bR^{d\times d}$ and quasi-L\'evy type measures $\nu$. To see this, let $(A,\nu,\gamma)_c$ be the characteristic triplet of a quasi-infinitely divisible distribution such that $A$ is not non-negative definite or such that $\nu$ is not non-negative. If all such triplets were to give rise to  characteristic functions of a probability distribution, then in particular $(n^{-1} A, n^{-1} , n^{-1} \gamma)_c$ must be the characteristic triplet of some probability distribution $\mu_n$, say, for all $n\in \bN$. It is then easy to see that the characteristic function of the $n$-fold convolution of $\mu_n$ with itself has L\'evy-Khintchine type representation \eqref{lkf}, so that $\mu_n^{\ast n} = \mu$ for each $n\in \bN$. Hence $\mu$ is infinitely divisible, and the uniqueness of the characteristic triplet implies that $A$ is non-negative definite and $\nu$ is non-negative, a contradiction.  In Lemma \ref{nnd} below we will actually see that only non-negative definite matrices $A$ are possible. The quasi-L\'evy measure does not need to be a L\'evy measure, examples of which will be given in Section~\ref{S3}. However, not every quasi-L\'evy type measure can occur as the quasi-L\'evy measure of a quasi-infinitely divisible distribution, e.g. a quasi-L\'evy type measure $\nu$ in $\bR$ with $\nu^- \neq 0$ and $\nu^+$ being the zero measure or a one-point measure can never be the quasi-L\'evy measure of a quasi-infinitely divisible distribution, as mentioned in \cite[Ex. 2.9]{lindner}. This is the reason why we distinguish between quasi-L\'evy type measures and quasi-L\'evy measures. A quasi-L\'evy type measure is any function $\nu:\cB_0^d \to \bR$ as in Definition \ref{qlm}, while a quasi-L\'evy measure is a quasi-L\'evy type measure that is linked to a (necessarily quasi-infinitely divisible) probability distribution.
}
\end{remark}

In \cite[Lem. 2.7]{lindner} it was shown that if $(a,\nu,\gamma)$ is the characteristic triplet of a quasi-infinitely divisible distribution on $\bR$, then necessarily $a\geq 0$. We now extend this to higher dimensions, by showing that the Gaussian covariance matrix must necessarily be non-negative definite.

\begin{lemma} \label{nnd}
If $\mu$ is a quasi-infinitely divisible distribution on $\bR^d$ with characteristic triplet $(A, \nu, \gamma)$, then $A$ is non-negative definite.
\end{lemma}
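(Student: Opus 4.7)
The plan is to reduce the claim to the already established one-dimensional result \cite[Lem.~2.7]{lindner} (quoted just above the statement of the lemma) by projecting $\mu$ onto arbitrary directions. For fixed $u \in \bR^d$, let $\mu_u$ denote the image measure of $\mu$ under the linear map $\pi_u : \bR^d \to \bR$, $x \mapsto \langle u, x\rangle$, so that $\widehat{\mu_u}(t) = \widehat{\mu}(tu)$ for all $t \in \bR$. The strategy is to show that $\mu_u$ is quasi-infinitely divisible on $\bR$ with Gaussian variance equal to $\langle u, Au\rangle$, and then invoke the one-dimensional result.

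First, by Definition~\ref{def-qid}, write $\widehat{\mu} = \widehat{\mu_1}/\widehat{\mu_2}$ with infinitely divisible $\mu_1, \mu_2$ on $\bR^d$ with standard characteristic triplets $(A_i,\nu_i,\gamma_i)$, $i=1,2$. By the uniqueness statement in Theorem~\ref{lk}, $A = A_1 - A_2$. Projecting the identity $\widehat{\mu}\cdot\widehat{\mu_2} = \widehat{\mu_1}$ in direction $u$ yields $\widehat{\mu_u}\cdot\widehat{\mu_{2,u}} = \widehat{\mu_{1,u}}$. Since the image of an infinitely divisible distribution under a linear map is again infinitely divisible (this is immediate from the L\'evy--Khintchine formula), $\mu_{1,u}$ and $\mu_{2,u}$ are infinitely divisible on $\bR$, and consequently $\mu_u$ is quasi-infinitely divisible on $\bR$.

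Next, I would identify the Gaussian variance of $\mu_u$. A standard computation (or again the behaviour of characteristic triplets under linear maps) shows that $\mu_{i,u}$ has Gaussian variance $\langle u, A_i u\rangle$ for $i=1,2$. Applying the uniqueness part of Theorem~\ref{lk} in dimension one to $\mu_u$ therefore gives that its Gaussian variance equals $\langle u, A_1 u\rangle - \langle u, A_2 u\rangle = \langle u, Au\rangle$.

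Finally, \cite[Lem.~2.7]{lindner} asserts that the Gaussian variance of any quasi-infinitely divisible distribution on $\bR$ is non-negative, so $\langle u, Au\rangle \geq 0$ for every $u \in \bR^d$. Together with the symmetry of $A$ (part of Theorem~\ref{lk}), this yields that $A$ is non-negative definite. There is no genuine obstacle here: the argument is a pure projection reduction to the one-dimensional case, and the only step requiring care is verifying that the Gaussian variance of the projected distribution is $\langle u, Au\rangle$, which rests on the uniqueness of the one-dimensional L\'evy--Khintchine type representation.
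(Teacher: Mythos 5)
Your proof is correct, but it takes a different route from the paper. You reduce the claim to the one-dimensional result \cite[Lem.~2.7]{lindner} by projecting onto each direction $u$: you verify directly from Definition~\ref{def-qid} that $\mu_u$ is quasi-infinitely divisible on $\bR$ (images of infinitely divisible laws under linear maps are infinitely divisible), identify its Gaussian variance as $\langle u, Au\rangle$ via the behaviour of triplets under linear maps together with uniqueness, and then quote the one-dimensional non-negativity. All steps are sound, and importantly you avoid circularity by not invoking Lemma~\ref{projection} (which appears only after the lemma in the paper). The paper instead gives a direct, self-contained argument: it computes $\lim_{t\to\infty} t^{-2}\Psi_\mu(tz) = -\tfrac12\langle z,Az\rangle$ using \cite[Lem.~43.11~(i)]{sato} and derives a contradiction with $|\widehat\mu|\le 1$ if $\langle z_0,Az_0\rangle<0$. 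The two arguments are close in spirit --- the paper itself remarks immediately afterwards that its proof implicitly concerns the projections onto the lines $\{tz: t\in\bR\}$ --- but yours buys brevity by treating the one-dimensional lemma as a black box, while the paper's version re-proves the asymptotics in $\bR^d$ and thereby also establishes the limit formula $\lim_{t\to\infty}t^{-2}\Psi_\mu(tz)=-\tfrac12\langle z,Az\rangle$, which is reused later (e.g.\ in Example~\ref{ex-counter}).
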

\begin{proof}
For $z \in \bR^d$ and $t \in \bR$ it holds
\begin{align*}
	\Psi_{\mu}(tz)
	= \ii t \langle \gamma, z \rangle
	- t^2 \frac{1}{2} \langle z,Az \rangle
	+\int_{\bR^d} \left(
		e^{\ii t \langle z,x \rangle } -1
		-\ii t \langle z,x \rangle \1_{ [0,1] }(|x|)
	\right) \nu(\dx).
\end{align*}
Due to \cite[Lem. 43.11 (i)]{sato} we have
\begin{align*}
	\lim_{t \to \infty} t^{-2}\int_{\bR^d} \left(
		 e^{\ii t \langle z,x \rangle } -1
		 -\ii t \langle z,x \rangle \1_{ [0,1] }(|x|)
	\right) \nu^{\pm}(\dx)
	= 0,
\end{align*}
hence $ \lim_{t \to \infty} t^{-2}\Psi_{\mu}(tz)
= - \frac{1}{2} \langle z, Az \rangle$.
If $\langle z_0, Az_0 \rangle < 0$ would hold for some $z_0 \in \bR^d$, then we would obtain $|\widehat{\mu}(tz_0)| = |\exp(\Psi_{\mu}(tz_0))| \to \infty$ as $t \to \infty$, which is a contradiction.
\end{proof}

It is natural to ask why one should restrict to symmetric matrices $A\in \bR^{d\times d}$ in the L\'evy-Khintchine type representation and not allow arbitrary matrices $A\in \bR^{d\times d}$. The next remark clarifies that this does not lead to new distributions, but that one would loose uniqueness of the characteristic triplet when allowing more generally non-symmetric matrices.

\begin{remark}
{\rm Given an arbitrary matrix $A \in \bR^{d \times d}$ we can write $A = A_1 + A_2$, where $A_1 = \frac{1}{2}(A+A^{T})$ and $A_2 = \frac{1}{2}(A-A^{T})$.
The matrix $A_1$ is symmetric and $A_2$ satisfies $A_2^T = -A_2$, which implies that
$\langle z, A_2z \rangle
    = z^TA_2z
    = (z^T A_2z)^T
    = z^TA_2^Tz
    = -\langle z, A_2z \rangle$,
and therefore $\langle z, A_2z \rangle = 0$ for all $z \in \bR^d$.
It follows that $\langle z, Az \rangle = \langle z, A_1z \rangle$ for all $z \in \bR^d$.
Hence, if we do not require that the matrix $A$ in Theorem \ref{lk} is symmetric, then the representation of $\widehat{\mu}$ in \eqref{lkf} is not unique.
Further, the class of distributions $\mu$ on $\bR^d$ whose characteristic function $\widehat{\mu}$ allows the representation \eqref{lkf} with an arbitrary matrix $A \in \bR^{d \times d}$ is exactly the class of quasi-infinitely divisible distributions.}
\end{remark}

Having seen the reason why we restrict to symmetric matrices, we would now like to know if we get new distributions (or non-unique triplets) if we also allow for complex $\gamma\in \bR^d$,  complex symmetric $A\in \bC^{d\times d}$ and complex quasi-L\'evy measures in the L\'evy--Khintchine representation.
Berger showed in \cite[Thm. 3.2]{berger} that for $d=1$ this does not lead to a greater class of distributions, and that then necessarily $\gamma\in \bR$, $A\in [0,\infty)$ and that $\nu$ is real-valued, i.e. a quasi-L\'evy measure.
We now generalise this result to distributions on $\bR^d$.
To state this theorem, a \emph{complex quasi-L\'evy type measure} on $\bR^d$ is a mapping $\nu: \cB_0^d \to \bC$ such that $\Re \nu$ and $\Im \nu$ are quasi-L\'evy type measures on $\bR^d$.
A function $f: \bR^d \to \bC$ is said to be \emph{integrable} with respect to $\nu$, if it is integrable with respect to $\Re \nu$ and $\Im \nu$.
In this case, we define
\begin{align*}
    \int_{\bR^d} f(x) \nu(\dx)
    \coloneqq \int_{\bR^d} f(x) (\Re\nu)(\dx)
    + \ii \int_{\bR^d} f(x) (\Im\nu)(\dx).
\end{align*}

\begin{theorem} \label{clk}
Let $\mu$ be a distribution on $\bR^d$ such that
its characteristic function admits the representation
\begin{align*}
	\widehat{\mu}(z)
	=\exp \left(
		\ii \langle \gamma, z \rangle
		-\frac{1}{2} \langle z,Az \rangle
		+\int_{\bR^d} \left(
			e^{\ii \langle z,x \rangle } -1
			-\ii \langle z,x \rangle \1_{ [0,1] }(|x|)
		\right) \nu(\dx)
	\right)
\end{align*}
for every $z \in \bR^d$ with a symmetric matrix $A \in \bC^{d \times d}$, a complex quasi-L\'evy type measure $\nu$ on $\bR^d$ and $\gamma \in \bC^d$.
Then $A \in \bR^{d \times d}$, $\gamma \in \bR^d$ and $\Im \nu = 0$, that is, $\nu$ is a quasi-L\'evy type measure and $\mu$ is quasi-infinitely divisible.
\end{theorem}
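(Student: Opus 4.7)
The starting point is the symmetry $\widehat{\mu}(-z) = \overline{\widehat{\mu}(z)}$, which holds for any probability distribution on $\bR^d$. Since the given representation shows $\widehat{\mu}$ has no zeros on $\bR^d$ and the exponent $\Psi$ is continuous with $\Psi(0) = 0$, $\Psi$ is the distinguished logarithm of $\widehat{\mu}$; applying uniqueness of the distinguished logarithm (cf.\ \cite[Lem.~7.6]{sato}) to $e^{\Psi(-z)} = e^{\overline{\Psi(z)}}$ forces $\Psi(-z) = \overline{\Psi(z)}$ for every $z\in\bR^d$. I would then decompose $A = A_1 + \ii A_2$, $\gamma = \gamma_1 + \ii \gamma_2$ and $\nu = \Re\nu + \ii\Im\nu$, with $A_1, A_2\in\bR^{d\times d}$ real symmetric, $\gamma_1,\gamma_2\in\bR^d$, and $\Re\nu, \Im\nu$ real quasi-L\'evy type measures. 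Plugging this into $\Psi(-z) - \overline{\Psi(z)} = 0$, splitting into real and imaginary parts and using the evenness of $\cos$ and oddness of $\sin$, one obtains, for every $z\in\bR^d$, the two identities
\begin{align*}
&\text{(I)}\quad \langle\gamma_2,z\rangle + \int_{\bR^d}\bigl(\sin\langle z,x\rangle - \langle z,x\rangle\1_{[0,1]}(|x|)\bigr)\,\Im\nu(\dx) = 0,\\
&\text{(II)}\quad \tfrac{1}{2}\langle z, A_2 z\rangle = \int_{\bR^d}(\cos\langle z,x\rangle - 1)\,\Im\nu(\dx).
\end{align*}

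The next step is to eliminate $A_2$. Replacing $z$ by $tz$ in (II) and dividing by $t^2$, the argument from the proof of Lemma~\ref{nnd}, based on \cite[Lem.~43.11(i)]{sato} applied separately to the L\'evy measures $(\Im\nu)^+$ and $(\Im\nu)^-$, shows that the right-hand side is $o(t^2)$ as $t\to\infty$. Hence $\langle z, A_2 z\rangle = 0$ for every $z\in\bR^d$, and symmetry of $A_2$ forces $A_2 = 0$, whereupon (II) reduces to $\int(\cos\langle z,x\rangle - 1)\,\Im\nu(\dx) = 0$. Adding $\ii$ times (I) to this reduced form of (II) and using $(\cos\langle z,x\rangle - 1) + \ii\sin\langle z,x\rangle = e^{\ii\langle z,x\rangle} - 1$, both identities merge into the single statement
\begin{align*}
\ii\langle\gamma_2,z\rangle + \int_{\bR^d}\bigl(e^{\ii\langle z,x\rangle}-1 - \ii\langle z,x\rangle\1_{[0,1]}(|x|)\bigr)\,\Im\nu(\dx) = 0, \quad z\in\bR^d.
\end{align*}

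Exponentiating, the left-hand side becomes the L\'evy-Khintchine type characteristic function associated with the triplet $(0, \Im\nu, \gamma_2)$, and it equals the constant function $1 = \widehat{\delta_0}(z)$. Since $\delta_0$ is infinitely (hence quasi-infinitely) divisible with standard triplet $(0,0,0)$, the uniqueness clause of Theorem~\ref{lk} forces $\gamma_2 = 0$ and $\Im\nu = 0$; combined with $A_2 = 0$ this yields $A\in\bR^{d\times d}$, $\gamma\in\bR^d$ and $\Im\nu = 0$, and Theorem~\ref{lk} then confirms that $\mu$ is quasi-infinitely divisible. The main obstacle I anticipate is the careful bookkeeping when separating real and imaginary parts of the complex integral and checking that each individual integrand in (I) and (II) is integrable with respect to $|\Im\nu|$; this is not really an issue since each of the four integrands is $O(|x|^2)$ near the origin and bounded for large $|x|$, and is therefore integrable by the quasi-L\'evy type property of $\Im\nu$.
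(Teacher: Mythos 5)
Your proof is correct and follows essentially the same route as the paper: the symmetry $\Psi(-z)=\overline{\Psi(z)}$ obtained from the distinguished logarithm yields exactly the two identities the paper derives from $|\widehat{\mu}(z)|^2$ and $\widehat{\mu}(z)/\widehat{\mu}(-z)$, and both arguments conclude by recognising the imaginary-part data as a L\'evy--Khintchine type triplet for $\delta_0$ and invoking the uniqueness clause of Theorem~\ref{lk}. Your intermediate elimination of $A_2$ via the large-$t$ asymptotics is valid but redundant, since the uniqueness of the triplet $(\Im A,\Im\nu,\Im\gamma)$ for $\delta_0$ already forces $\Im A=0$ along with the other two conclusions.
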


\begin{proof}
The proof is very much the same as that of \cite[Thm. 3.2]{berger} in dimension 1, but we give the full proof for convenience.

For $z \in \bR^d$ we have
$$
    |\widehat{\mu}(z)|^2
    =\widehat{\mu}(z) \widehat{\mu} (-z)
        = \exp \left(
        -\langle z,Az \rangle + 2 \int_{\bR^d} \left(
            \cos \langle z,x \rangle -1
        \right) \nu(\dx)
    \right).$$
The function
$g: \bR^d \to \bC$, $
    z \mapsto -\langle z,Az \rangle + 2 \int_{\bR^d} \left(
        \cos \langle z,x \rangle -1
    \right) \nu(\dx)$
is continuous and satisfies $g(0) = 0$, implying that $g$ is the distinguished logarithm of $|\widehat{\mu}|^2$, see \cite[Lem. 7.6]{sato}.
The uniqueness of the distinguished logarithm implies that $g$ also has to be the natural logarithm of $|\widehat{\mu}|^2$, so that $g(z) \in \bR$ for all $z \in \bR^d$.
Hence,
\begin{align} \label{complex}
    - \frac{1}{2}\langle z,( \Im A)z \rangle + \int_{\bR^d} \left(
        \cos \langle z,x \rangle -1
    \right) (\Im\nu) (\dx)
    = 0
    \quad \textrm{for all } z \in \bR^d.
\end{align}
Further, for $z \in \bR^d$ it holds
$$
    \frac{\widehat{\mu}(z)}{\widehat{\mu}(-z)}
    = \exp \left(
        2 \ii \left(
            \langle \gamma, z \rangle
            + \int_{\bR^d} \left(
                \sin \langle z,x \rangle - \langle z,x \rangle \1_{[0,1]}(|x|)
            \right) \nu(\dx)
        \right)
    \right)$$
and $|\widehat{\mu}(z)| = |\overline{\widehat{\mu}(z)}| = |\widehat{\mu}(-z)|$, so $\left| \frac{\widehat{\mu}(z)}{\widehat{\mu}(-z)} \right| = 1$ and thus
\begin{align*}
    \langle \Im \gamma, z \rangle
    + \int_{\bR^d} \left(
        \sin \langle z,x \rangle - \langle z,x \rangle \1_{[0,1]}(|x|)
    \right) (\Im \nu)(\dx)
    = 0
    \quad \textrm{for all } z \in \bR^d.
\end{align*}
Adding this identity multiplied by $\ri$ to \eqref{complex} we obtain
\begin{align*}
   \Psi_{\delta_0}(z) =  0
    &= \ii \langle \Im \gamma ,z \rangle
    - \frac{1}{2} \langle z, (\Im A) z \rangle
    + \int_{\bR^d} \left(
        e^{\ii \langle z,x \rangle} - 1 - \ii \langle z,x \rangle \1_{[0,1]}(|x|)
    \right) (\Im \nu)(\dx)
\end{align*}
for all $z \in \bR^d$.
By the uniqueness of the L\'evy-Khintchine type representation for the Dirac measure $\delta_0$, it follows that $\Im A = 0$, $\Im \nu = 0$ and $\Im \gamma = 0$.
\end{proof}

In the proof of Lemma \ref{nnd}, through $\Psi_\mu(tz)$  we implicitly were concerned with the projections of quasi-infinitely divisible distributions onto the lines $\{tz : t \in \bR\}$ for given $z\in \bR^d$.
These projections are again quasi-infinitely divisible, and this holds more generally for affine linear images of random vectors with quasi-infinitely divisible distribution:

\begin{lemma} \label{projection}
Let $X$ be a random vector in $\bR^d$ with $\mu=\law(X)$ being  quasi-infinitely divisible with characteristic triplet $(A, \nu, \gamma)$. Let $b \in \bR^m$ and $M \in \bR^{m \times d}$.
Then the distribution of the $\bR^m$-valued random vector $U := M X + b$ is quasi-infinitely divisible
with characteristic triplet $(A_U, \nu_U, \gamma_U)$, where
\begin{align*}
    A_U &= MAM^{T},\\
    \gamma_U &= b + M \gamma
    + \int_{\bR^d} Mx \left(
        \1_{[0,1]}(|Mx|) - \1_{ [0,1] }(|x|)
    \right) \nu(\dx) \quad \textrm{and} \\
    \nu_U(B) &= \nu( \{ x \in \bR^d : Mx \in B\})
    \quad \textrm{for } B \in \cB_0^m.
\end{align*}
\end{lemma}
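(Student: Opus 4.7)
My plan is a direct computation: starting from the identity $\widehat{\mu}_U(z) = \re^{\ii\langle z, b\rangle}\widehat{\mu}(M^T z)$, I would substitute the L\'evy--Khintchine representation of $\widehat{\mu}$ supplied by Theorem \ref{lk}, rearrange it into standard L\'evy--Khintchine form on $\bR^m$, and invoke the converse direction of Theorem \ref{lk} together with uniqueness of the triplet to conclude.

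Using the identities $\langle M^T z, x\rangle = \langle z, Mx\rangle$ and $\langle M^T z, A M^T z\rangle = \langle z, MAM^T z\rangle$, together with the factor $\re^{\ii\langle z, b\rangle}$, the exponent of $\widehat{\mu}_U(z)$ becomes
\begin{align*}
\ii\langle b + M\gamma, z\rangle - \tfrac{1}{2}\langle z, MAM^T z\rangle + \int_{\bR^d}\left(\re^{\ii\langle z, Mx\rangle}-1-\ii\langle z, Mx\rangle\1_{[0,1]}(|x|)\right)\nu(\dx).
\end{align*}
To convert this into an L\'evy--Khintchine representation in the $y=Mx$ variable, I would split $\1_{[0,1]}(|x|) = \1_{[0,1]}(|Mx|) - (\1_{[0,1]}(|Mx|)-\1_{[0,1]}(|x|))$, move the second piece into the drift (producing precisely the correction term in the claimed formula for $\gamma_U$), and then change variables to obtain $\int_{\bR^m}(\re^{\ii\langle z, y\rangle}-1-\ii\langle z, y\rangle\1_{[0,1]}(|y|))\nu_U(\dy)$ with $\nu_U$ as defined.

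The principal obstacle is to verify that $\nu_U$ is a bona fide quasi-L\'evy type measure on $\bR^m$ and that the correction integral converges absolutely. Assuming $M\neq 0$ (the trivial case $M=0$ gives $\mu_U=\delta_b$), I would push forward $\nu^+$ and $\nu^-$ separately: for $B\in\cB_r^m$ one has $M^{-1}(B)\subset\{x\in\bR^d:|x|\geq r/\|M\|\}\in\cB_{r/\|M\|}^d$, so $(\nu^\pm)_U(B):=\nu^\pm(M^{-1}(B))$ is finite, and $|Mx|^2\leq\|M\|^2|x|^2$ yields $\int(1\wedge|y|^2)(\nu^\pm)_U(\dy)=\int(1\wedge|Mx|^2)\nu^\pm(\dx)<\infty$. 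Hence $\nu_U := (\nu^+)_U - (\nu^-)_U$ is a quasi-L\'evy type measure on $\bR^m$. For the correction integral, the integrand $Mx(\1_{[0,1]}(|Mx|)-\1_{[0,1]}(|x|))$ is supported in $\{|x|\leq 1<|Mx|\}\cup\{|Mx|\leq 1<|x|\}$; on the first set $|x|\geq 1/\|M\|$ and on the second $|x|>1$, so $|\nu|$ is finite on both, while $|Mx|$ is bounded on both, whence absolute convergence. Comparing exponents and applying the uniqueness assertion of Theorem \ref{lk} then identifies the triplet of $\mu_U$ with the claimed $(A_U, \nu_U, \gamma_U)$.
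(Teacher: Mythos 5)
Your proposal is correct and follows essentially the same route as the paper, which computes the identity $\widehat{\law(U)}(z)=\re^{\ri\langle b,z\rangle}\widehat{\mu}(M^Tz)$ and then defers the remaining bookkeeping to the analogue of Sato's Proposition 11.10. The details you supply — pushing forward $\nu^{\pm}$ separately, checking $\nu_U$ is a quasi-L\'evy type measure, and verifying absolute convergence of the truncation-correction integral on the symmetric difference $\{|x|\le 1<|Mx|\}\cup\{|Mx|\le 1<|x|\}$ — are exactly the steps that reference covers, and they are carried out correctly.
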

\begin{proof}
We see that
\begin{align*}
\widehat{\law(U)}(z)
= \int_{\bR^d} \re^{\ii \langle Mx +b, z \rangle} \, \mu(\dx)
= \re^{\ri \langle b , z \rangle} \int_{\bR^d} \re^{\ii \langle x, M^T z \rangle} \, \mu(\dx)
= \re^{\ri \langle b , z \rangle} \widehat{\mu}(M^T z)
\end{align*}
for $z\in \bR^m$.
The rest follows similar to \cite[Prop. 11.10]{sato}. 
\end{proof}

We conclude this section with a remark that it is also possible to define the drift or center of a quasi-infinitely divisible distribution, provided the quasi-L\'evy measure satisfies a certain integrability condition.

\begin{remark} \label{mean}
{\rm Let $\mu \sim \qid(A, \nu, \gamma)$ and suppose that
$
\int_{|x| \leq 1} |x|\,  |\nu|(\dx) < \infty.
$
Then the characteristic function $\widehat{\mu}$ of $\mu$ can be rewritten to
\begin{align} \label{lkdrift}
\widehat{\mu}(z)
= \exp \left(
	 \ii \langle \gamma_0, z \rangle
	- \frac{1}{2} \langle z, Az \rangle
	+ \int_{\bR^d} \left(
		e^{\ii \langle z,x \rangle} -1
	\right)\nu(\dx)
\right)
\end{align}
for all $z \in \bR^d$, where $\gamma_0 = \gamma - \int_{|x| \leq 1} x \nu(\dx)$.
This representation is unique and $\gamma_0$ is called the \emph{drift} of $\mu$.
Conversely, is $\mu$ is a distribution on $\bR^d$ such that its characteristic function admits the representation
\eqref{lkdrift} for a symmetric matrix $A \in \bR^{d \times d}$, a quasi-L\'evy measure $\nu$ on $\bR^d$ and $\gamma_0 \in \bR^d$,
then one can easily verify that $\mu$ is quasi-infinitely divisible with characteristic triplet $(A,\nu,\gamma)$, where $\gamma = \gamma_0 + \int_{|x| \leq 1} x \nu(\dx)$.
Then $(A, \nu, \gamma_0)$ is also called the \emph{characteristic triplet} of $\mu$ with respect to $c(x) = 0$ and denoted by $(A, \nu, \gamma_0)_0$.\\
Similarly, if
$
\int_{|x|>1} |x| \, |\nu|(\dx) < \infty,
$
then $\widehat{\mu}$ admits the representation
\begin{align} \label{lkmean}
\widehat{\mu}(z)
= \exp \left(
	 \ii \langle \gamma_m, z \rangle
	- \frac{1}{2} \langle z, Az \rangle
	+ \int_{\bR^d} \left(
		e^{\ii \langle z,x \rangle} -1 - \ii \langle z,x \rangle
	\right)\nu(\dx)
\right)
\end{align}
for all $z \in \bR^d$ with $\gamma_m = \gamma + \int_{|x| > 1} x \, \nu(\dx)$.
This representation is again unique and $\gamma_m$ is called the \emph{center} of $\mu$.}
\end{remark}

\section{Examples} \label{S3}
\setcounter{equation}{0}

A helpful tool to find examples of quasi-infinitely divisible distributions is the fact that the convolution of quasi-infinitely divisible distributions is again quasi-infinitely divisible.

\begin{remark} \label{convolution}
{\rm Let $c: \bR^d \to \bR^d$ be a representation function.
If $\mu_1 \sim \qid(A_1, \nu_1, \gamma_1)_c$ and $\mu_2 \sim \qid(A_2, \nu_2, \gamma_2)_c$, then
$\mu_1 \ast \mu_2
\sim \qid(A_1+A_2, \nu_1+\nu_2, \gamma_1+\gamma_2)_c.$}
\end{remark}

An important class of quasi-infinitely divisible distributions was established by Cuppens \cite[Prop. 1]{Cuppens1969}, \cite[Thm. 4.3.7]{cuppens75}. 
He showed that every distribution which has an atom of mass $\lambda > \frac{1}{2}$ is quasi-infinitely divisible. We state his result and also prove it, in order to get an idea of what is behind the theorem.

\begin{theorem} \label{cuppens}
Let $\mu = \lambda \delta_a + (1-\lambda) \sigma$ some $\lambda \in (\frac{1}{2},1]$, $a \in \bR^d$ and a distribution $\sigma$ on $\bR^d$ that satisfies $\sigma(\{a\}) = 0$.
Then $\mu$ is quasi-infinitely divisible with finite quasi-L\'evy measure
$\nu
= \left(\sum_{k = 1}^\infty
\frac{(-1)^{k+1}}{k} \left(
	\frac{1-\lambda}{\lambda}
\right)^k (\delta_{-a} \ast \sigma)^{\ast k}\right)_{|\cB_0^d}$,
Gaussian covariance matrix $0$ and drift $a$.
\end{theorem}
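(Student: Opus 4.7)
The approach is to rewrite $\widehat{\mu}$ in the drift-form L\'evy-Khintchine expression \eqref{lkdrift} of Remark~\ref{mean}, from which the triplet can be read off and quasi-infinite divisibility follows from Theorem~\ref{lk}. The key point is that the condition $\lambda > 1/2$ makes a certain geometric series absolutely summable, so that $\log(\lambda + (1-\lambda)\widehat{\rho})$ can be expanded into a series in powers of $\widehat{\rho}$, each of which is itself a characteristic function.

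Concretely, set $q := (1-\lambda)/\lambda \in [0,1)$ and $\rho := \delta_{-a} \ast \sigma$. From $\mu = \delta_a \ast (\lambda \delta_0 + (1-\lambda)\rho)$ one obtains
\begin{align*}
\widehat{\mu}(z) = \lambda\, e^{\ii\langle a,z\rangle}\bigl(1 + q\widehat{\rho}(z)\bigr).
\end{align*}
Since $|q\widehat{\rho}(z)| \le q < 1$ uniformly in $z$, the factor $1 + q\widehat{\rho}(z)$ stays in the disk $\{w \in \bC : |w-1| < 1\}$, on which the principal branch of $\log$ agrees with the absolutely convergent power series $\log(1+w) = \sum_{k=1}^\infty \frac{(-1)^{k+1}}{k} w^k$. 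Applying this with $w = q\widehat{\rho}(z)$, using $\widehat{\rho}(z)^k = \widehat{\rho^{\ast k}}(z) = 1 + \int_{\bR^d}(e^{\ii\langle z,x\rangle}-1)\,\rho^{\ast k}(\dx)$, and interchanging sum and integral via Fubini (justified by $|e^{\ii\langle z,x\rangle}-1| \le 2$ together with $\sum_{k\ge 1}q^k/k = -\log(1-q)<\infty$), we arrive at
\begin{align*}
\log\bigl(1 + q\widehat{\rho}(z)\bigr) = -\log\lambda + \int_{\bR^d} \bigl(e^{\ii\langle z,x\rangle}-1\bigr)\, \widetilde{\nu}(\dx),
\end{align*}
where $\widetilde{\nu} := \sum_{k\ge 1} \tfrac{(-1)^{k+1}}{k} q^k \rho^{\ast k}$ and the collapsed constant is $\log(1+q) = -\log\lambda$.

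Since the integrand vanishes at $x=0$, the integral against $\widetilde{\nu}$ equals the integral against its restriction $\nu := \widetilde{\nu}_{|\cB_0^d}$, which is precisely the quasi-L\'evy measure in the statement. Plugging back and cancelling $\lambda \cdot \lambda^{-1}$ gives
\begin{align*}
\widehat{\mu}(z) = \exp\Bigl(\ii\langle a,z\rangle + \int_{\bR^d}(e^{\ii\langle z,x\rangle}-1)\,\nu(\dx)\Bigr),
\end{align*}
which is \eqref{lkdrift} with $A = 0$ and $\gamma_0 = a$. Since $\widetilde{\nu}$ has total variation at most $-\log(1-q) < \infty$, so does $\nu$; in particular $\nu$ is a finite signed measure and hence a quasi-L\'evy type measure with $\int_{|x| \le 1} |x|\,|\nu|(\dx) < \infty$, so Theorem~\ref{lk} combined with Remark~\ref{mean} identifies $\mu$ as quasi-infinitely divisible with Gaussian covariance $0$, drift $a$, and quasi-L\'evy measure $\nu$. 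The only delicate step is the termwise interchange of sum and integral together with the identification of the principal branch logarithm, and both rest on the uniform bound $q<1$ forced by $\lambda>1/2$.
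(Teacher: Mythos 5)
Your proof is correct and follows essentially the same route as the paper: factor out $\delta_a$, expand the principal logarithm of $1+q\widehat{\rho}(z)$ as a power series (absolutely convergent since $q=(1-\lambda)/\lambda<1$), interchange sum and integral, and read off the triplet from the drift-form representation. The only cosmetic difference is that you compute the constant $\sum_k \frac{(-1)^{k+1}}{k}q^k=\log(1+q)=-\log\lambda$ directly, whereas the paper identifies it by evaluating at $z=0$; these are equivalent.
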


\begin{proof}
Shifting $\mu$ by $a$, we can and do assume without loss of generality by Remark \ref{convolution} that
$a = 0$. Define $\rho := \left(\sum_{k = 1}^\infty
\frac{(-1)^{k+1}}{k} \left(
	\frac{1-\lambda}{\lambda}
\right)^k \sigma^{\ast k}\right)$, where the sum converges absolutely to the finite signed measure $\rho$ since  $0 \leq \frac{1-\lambda}{\lambda} < 1$. Denote $\nu := \rho_{|\cB_0^d}$ as in the statement of the theorem, which then is a finite quasi-L\'evy type measure (observe that $\rho(\{0\})  \neq 0$ is possible although $\sigma(\{0\}) = 0$; hence it is important to subtract any mass of $\rho$ at 0, which is in particular achieved by restricting $\rho$ to $\cB_0^d$).

Next, observe that
$\widehat{\mu}(z)= \lambda + (1-\lambda) \widehat{\sigma}(z) = \lambda (1 + \frac{1-\lambda}{\lambda} \widehat{\sigma}(z))$ for  $z \in \bR^d$.
Again,
since $0 \leq \frac{1-\lambda}{\lambda} < 1$ and $|\widehat{\sigma}(z)|\leq 1$ for all $z \in \bR^d$, the series expansion of the principal branch of the complex logarithm of $\log (1+w)$ for $w\in \bC$ such that $|w|<1$ gives
\begin{align*}
    \log \widehat{\mu}(z)
    &= \log \lambda + \sum_{k=1}^{\infty}\frac{(-1)^{k+1}}{k} \left(
         \frac{1-\lambda}{\lambda}
    \right)^k \widehat{\sigma}(z)^k \\
    &= \log \lambda + \sum_{k=1}^{\infty}\frac{(-1)^{k+1}}{k} \left(
         \frac{1-\lambda}{\lambda}
    \right)^k \int_{\bR^d} \re^{\ii \langle z, x \rangle} \, \sigma^{\ast k}(\dx) \\
    &= \log \lambda + \int_{\bR^d} \re^{\ii \langle z, x \rangle} \, \rho(\dx)\\
    & =  \log \lambda + \rho(\bR^d) + \int_{\bR^d}  (\re^{\ii \langle z, x \rangle}-1) \, \nu(\dx)
\end{align*}
for every $z \in \bR^d$.
Using the fact that $\widehat{\mu}(0) = 1$, we find $0
    = \log \widehat{\mu}(0)
    = \log \lambda + \rho(\bR^d)$, finishing the proof.
\end{proof}

Lindner et al. \cite[Thm. 8.1]{lindner} showed that a probability distribution $\mu$ on $\bZ$ is quasi-infinitely divisible if and only if its characteristic function has no zeroes. This has been extended recently to distributions on $\bZ^d$ by Berger and Lindner \cite[Thm. 3.2]{cramerwold}. The precise result is as follows:

\begin{theorem} \label{gitter}
Let $\mu$ be a distribution that is supported in $\bZ^d$.
Then $\mu$ is quasi-infinitely divisible if and only if $\widehat{\mu}(z) \neq 0$ for all $z \in [0, 2\pi]^d$. In that case, the Gaussian covariance matrix of $\mu$ is zero, the quasi-L\'evy measure is finite and supported in $\bZ^d \setminus \{0\}$ and the drift of $\mu$ is in $\bZ^d$.
\end{theorem}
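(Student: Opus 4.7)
The \emph{only if} direction is immediate: if $\mu$ is quasi-infinitely divisible with $\widehat\mu=\widehat{\mu}_1/\widehat{\mu}_2$ and $\mu_1,\mu_2$ infinitely divisible, then by \eqref{eq-LK} each $\widehat{\mu}_i$ is zero-free on $\bR^d$, whence $\widehat\mu$ is zero-free on $\bR^d$ and in particular on $[0,2\pi]^d$. For the converse, assume $\mu$ is supported in $\bZ^d$ and $\widehat\mu\neq 0$ on $[0,2\pi]^d$. Since $\mu$ is supported in $\bZ^d$, $\widehat\mu$ is $2\pi$-periodic in each coordinate, so it is zero-free on all of $\bR^d$ and, by continuity and compactness, bounded away from zero.

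The plan is to construct the characteristic triplet by hand. Let $f:\bR^d\to\bC$ be the distinguished continuous logarithm of $\widehat\mu$ with $f(0)=0$ (cf.\ \cite[Lem.~7.6]{sato}). For each standard basis vector $e_j$ the continuous function $z\mapsto f(z+2\pi e_j)-f(z)$ takes values in $2\pi\ii\bZ$, hence equals a constant $2\pi\ii n_j$ with $n_j\in\bZ$. Setting $n:=(n_1,\ldots,n_d)^T\in\bZ^d$ and $g(z):=f(z)-\ii\langle n,z\rangle$, the function $g$ is continuous, $2\pi$-periodic in every coordinate, satisfies $g(0)=0$, and gives $\widehat\mu(z)=\exp(\ii\langle n,z\rangle+g(z))$. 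Since $\widehat\mu(z)=\sum_{k\in\bZ^d}p_k\re^{\ii\langle k,z\rangle}$ with $\sum_k p_k=1$, $\widehat\mu$ belongs to the Wiener algebra $A(\bT^d)$ of continuous functions on $\bT^d$ with absolutely summable Fourier coefficients. A local Wiener-L\'evy argument then yields $g\in A(\bT^d)$, i.e.\ $g(z)=\sum_{k\in\bZ^d}c_k\re^{\ii\langle k,z\rangle}$ with $\sum_k|c_k|<\infty$. Since $g(0)=0$ forces $\sum_k c_k=0$, we obtain
\begin{align*}
g(z)=\sum_{k\in\bZ^d\setminus\{0\}}c_k(\re^{\ii\langle k,z\rangle}-1)=\int_{\bR^d}\bigl(\re^{\ii\langle z,x\rangle}-1\bigr)\,\nu(\dx),
\end{align*}
where $\nu:=\sum_{k\neq 0}c_k\delta_k$ is a finite (a priori complex-valued) measure supported in $\bZ^d\setminus\{0\}$.

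Consequently $\widehat\mu$ admits a L\'evy-Khintchine type representation with Gaussian covariance matrix $A=0$, drift $n\in\bZ^d$ and complex quasi-L\'evy type measure $\nu$; passing to the standard representation function as in Remark~\ref{mean} and invoking Theorem~\ref{clk} forces $\nu$ to be real-valued. The uniqueness part of Theorem~\ref{lk} then yields that $\mu$ is quasi-infinitely divisible with $A=0$, with finite quasi-L\'evy measure supported in $\bZ^d\setminus\{0\}$ and with drift $n\in\bZ^d$. The main technical obstacle is showing $g\in A(\bT^d)$: this rests on the local Wiener-L\'evy theorem on the torus $\bT^d$ and is the only non-trivial input beyond elementary Fourier analysis and the general results of Section~\ref{S2}.
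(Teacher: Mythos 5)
Your proposal is correct and follows essentially the same route as the paper's (referenced) proof: reduce to a zero-free, $2\pi$-periodic element of the Wiener algebra by subtracting the linear part $\ii\langle n,z\rangle$ from the distinguished logarithm, apply the (local) Wiener--L\'evy theorem to get absolutely summable Fourier coefficients $c_k$, and then invoke Theorem~\ref{clk} to conclude that the resulting finite measure $\sum_{k\neq 0}c_k\delta_k$ is real-valued --- which is exactly the variant the paper itself advertises. The only point to keep in mind is that the passage ``$g\in A(\bT^d)$'' genuinely requires the local form of the Wiener--L\'evy theorem (covering the compact, possibly winding range of $\widehat\mu\,\re^{-\ii\langle n,\cdot\rangle}$ by finitely many disks on which a holomorphic branch of $\log$ exists), which you correctly identify as the sole non-elementary input.
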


The proof given in \cite{cramerwold, lindner} relies on the L\'evy--Wiener theorem, according to which for a continuous function $f:\bR^d \to \bC$ that is $2\pi$-periodic in all coordinates and is such that it has  absolutely summable Fourier coefficients, and a holomorphic function $h:D \to \bC$ on an open subset $D\subset \bC$ such that $f([0,2\pi]^d) \subset \bC$, also the composition $h\circ f$ has absolutely summable Fourier coefficients (i.e. it is an element of the so called Wiener algebra). The given zero-free characteristic function $\widehat{\mu}$ then has to be modified appropriately in order to apply this Wiener--L\'evy theorem to the distinguished logarithm, and then an argument is needed in order to show that the Fourier coefficients are indeed real-valued and not complex. This is carried out in detail in \cite{cramerwold, lindner} and we refer to these articles for the detailed proof. We only mention here that it is also possible to replace the proofs given there for the fact that the Fourier coefficients are real and not complex by Theorem \ref{clk} given in the present article.

There is nothing special about the lattice $\bZ^d$ and Theorem \ref{gitter} continues to hold for more general lattices, which is the contents of the next result that generalises \cite[Cor. 3.10]{lindner} to higher dimensions.

\begin{corollary} \label{c-lattice}
Let $M\in \bR^{d\times d}$ be invertible, $b\in \bR^d$ and $\mu$ be a probability distribution supported in the lattice $M \bZ^d + b = \{M z + b: z \in \bZ^d\}$. Then $\mu$ is quasi-infinitely divisible if and only if the characteristic function of $\mu$ has no zeroes on $(M^T)^{-1} ([0,2\pi]^d) = \{ (M^T)^{-1} x : x\in [0,2\pi]^d\}$. In that case, the Gaussian covariance matrix of $\mu$ is zero, the quasi-L\'evy measure is finite and supported in $M \bZ^d\setminus \{0\}$ and the drift of $\mu$ is in $M \bZ^d +b$.
\end{corollary}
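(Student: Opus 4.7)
The strategy is to reduce to the case of Theorem \ref{gitter} by an invertible affine change of variable and then transport the characteristic triplet back via Lemma \ref{projection}. Let $Y$ be a random vector with $\law(Y)=\mu$ and set $X\coloneqq M^{-1}(Y-b)$. Then $X$ is supported in $\bZ^d$; write $\mu'\coloneqq \law(X)$. The key identity is
\begin{align*}
\widehat{\mu'}(z) = \re^{-\ii\langle M^{-1}b,z\rangle}\,\widehat{\mu}\bigl((M^T)^{-1}z\bigr),\qquad z\in\bR^d,
\end{align*}
so $\widehat{\mu'}$ has no zeroes on $[0,2\pi]^d$ if and only if $\widehat{\mu}$ has no zeroes on $(M^T)^{-1}([0,2\pi]^d)$.

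Next I would establish that $\mu$ is quasi-infinitely divisible if and only if $\mu'$ is. One direction follows directly from Lemma \ref{projection} applied to $Y=MX+b$, and the reverse direction follows by applying Lemma \ref{projection} again to $X=M^{-1}Y-M^{-1}b$, since $M$ is invertible and affine images of QID distributions are QID. Combining this equivalence with Theorem \ref{gitter} applied to $\mu'$ yields the claimed characterisation: $\mu$ is quasi-infinitely divisible iff $\widehat{\mu}$ has no zeroes on $(M^T)^{-1}([0,2\pi]^d)$.

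It remains to read off the characteristic triplet. By Theorem \ref{gitter}, $\mu'$ has Gaussian covariance matrix $0$, a finite quasi-L\'evy measure $\nu'$ supported in $\bZ^d\setminus\{0\}$, and a drift $\gamma_0'\in\bZ^d$. By Lemma \ref{projection} (and the formula for the image of the quasi-L\'evy measure under a linear map) applied to $Y=MX+b$, the Gaussian covariance matrix of $\mu$ is $M\cdot 0\cdot M^T=0$, and the quasi-L\'evy measure $\nu_\mu$ of $\mu$ is the image of $\nu'$ under $x\mapsto Mx$, which is finite and supported in $M\bZ^d\setminus\{0\}$. Since $\nu_\mu$ is finite, the drift of $\mu$ is well defined (Remark \ref{mean}), and a direct computation using $\widehat{\mu}(z)=\re^{\ii\langle b,z\rangle}\widehat{\mu'}(M^Tz)$ with the drift-form \eqref{lkdrift} of $\widehat{\mu'}$ gives
\begin{align*}
\widehat{\mu}(z)
= \exp\!\left( \ii\langle b+M\gamma_0',z\rangle + \int_{\bR^d}\bigl(\re^{\ii\langle z,y\rangle}-1\bigr)\,\nu_\mu(\dy)\right),
\end{align*}
so the drift of $\mu$ equals $b+M\gamma_0'\in M\bZ^d+b$.

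The only mildly delicate point is the bookkeeping between the standard representation function used in Lemma \ref{projection} and the ``no representation function'' form in which Theorem \ref{gitter} describes the drift. This is why I would go through the drift-form \eqref{lkdrift} directly rather than tracking the correction integral in Lemma \ref{projection}; the finiteness of $\nu'$ (hence of $\nu_\mu$) makes this painless. I do not expect any genuine obstacle beyond this.
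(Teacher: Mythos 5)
Your proposal is correct and is essentially the paper's own proof: the paper likewise sets $X=M^{-1}(U-b)$, uses the identity $\widehat{\mu}(z)=\re^{\ri\langle b,z\rangle}\widehat{\law(X)}(M^Tz)$, and invokes Theorem \ref{gitter} together with Lemma \ref{projection}, noting that the drift transforms as $M\gamma_X+b$. Your explicit computation of the drift via the representation \eqref{lkdrift} simply fills in what the paper calls an \lq\lq easy extension of Lemma \ref{projection}\rq\rq, and it is carried out correctly.
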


\begin{proof}
Let $U$ be a random vector with distribution $\mu$ and define $X=M^{-1} (U-b)$. Then $\law(X)$ is supported in $\bZ^d$ and $\widehat{\mu}(z) = \widehat{{\law(U)}}(z)= \re^{\ri \langle b, z \rangle} \widehat{\law(X)} (M^T z)$ for $z\in \bR^d$. The result is then an immediate consequence of Theorem \ref{gitter} together with Lemma \ref{projection}; here, an easy extension of Lemma \ref{projection} shows that the drift $\gamma_U$ of $U$  is $M \gamma_X + b$, where $\gamma_X$ is the drift of $\law(X)$.
\end{proof}

An interesting application of the previous theorem on $\bZ^d$ has been given in \cite[Thm. 4.1]{cramerwold}, where a Cram\'er--Wold device for infinite divisibility of $\bZ^d$-distributions was established. The precise statement is as follows:

\begin{corollary} \label{cor-cramerwold}
Let $X$ be a $\bZ^d$-valued random vector with distribution $\mu$. Then the following are equivalent:\\
(i) $\mu$ is infinitely divisible.\\
(ii) $\law(a^T X)$ is infinitely divisible for all $a\in \bR^d$.\\
(iii) $\law(a^T X)$ is infinitely divisible for all $a\in \bN_0^d$.\\
(iv) The characteristic function of $\mu$ has no zeroes on $\bR^d$ and there exists some $a=(a_1,\ldots, a_d)^T \in \bR^d$ such that $a_1,\ldots, a_d$ are linearly independent over $\bQ$ and such that $\law(a^T X)$ is infinitely divisible.
\end{corollary}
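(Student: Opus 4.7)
The implications (i) $\Rightarrow$ (ii) $\Rightarrow$ (iii) are immediate: if for every $n$ one has $X \stackrel{d}{=} X_1^{(n)}+\cdots+X_n^{(n)}$ with $X_j^{(n)}$ i.i.d., then $a^TX \stackrel{d}{=} \sum_j a^T X_j^{(n)}$ exhibits $\law(a^TX)$ as an $n$-fold convolution, and $\bN_0^d\subset\bR^d$ makes the step from (ii) to (iii) trivial. The substantive content lies in establishing (iv) $\Rightarrow$ (i) and (iii) $\Rightarrow$ (iv), which together with the easy implications close the chain of equivalences.

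For (iv) $\Rightarrow$ (i) the plan is as follows. Since $\widehat{\mu}$ has no zeroes on $\bR^d$ and $\mu$ is supported in $\bZ^d$, Theorem~\ref{gitter} gives $\mu\sim\qid(0,\nu,\gamma)$ with $\nu$ a finite quasi-L\'evy measure supported in $\bZ^d\setminus\{0\}$. Applying Lemma~\ref{projection} with $M=a^T$ and $b=0$ shows that $\law(a^TX)$ is quasi-infinitely divisible with quasi-L\'evy measure $\nu_a(B)=\nu(\{x\in\bZ^d : a^Tx\in B\})$ for $B\in\cB_0^1$. The $\bQ$-linear independence of $a_1,\ldots,a_d$ forces $a^Tx=0$ on $\bZ^d$ to imply $x=0$, so the map $\bZ^d\ni x\mapsto a^Tx$ is injective and sends $\bZ^d\setminus\{0\}$ into $\bR\setminus\{0\}$. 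Consequently $\nu_a$ is purely atomic with $\nu_a(\{a^Tx\})=\nu(\{x\})$ for every $x\in\bZ^d\setminus\{0\}$, and hence $\nu_a\ge 0$ if and only if $\nu\ge 0$. Infinite divisibility of $\law(a^TX)$ together with the uniqueness of the characteristic triplet in Theorem~\ref{lk} forces $\nu_a$ to be a genuine L\'evy measure, whence $\nu\ge 0$, so $\mu$ is infinitely divisible.

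For (iii) $\Rightarrow$ (iv) I would first upgrade the hypothesis to hold for all $a\in\bR_{\ge 0}^d$. Rescaling preserves infinite divisibility, so (iii) immediately yields that $\law(a^TX)$ is infinitely divisible for every $a\in\bQ_{\ge 0}^d$ (write $a=n/c$ with $n\in\bN_0^d$, $c\in\bN$). For arbitrary $a\in\bR_{\ge 0}^d$ approximate by $a_n\in\bQ_{\ge 0}^d$; then $a_n^TX\to a^TX$ almost surely, hence in distribution, and closedness of the class of infinitely divisible distributions under weak convergence (\cite[Lem.~7.8]{sato}) gives that $\law(a^TX)$ is infinitely divisible. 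Part (iv)(b) then follows by picking any $a\in\bR_{>0}^d$ with $\bQ$-linearly independent coordinates, for instance $(1,\sqrt{p_2},\ldots,\sqrt{p_d})$ with $p_2,\ldots,p_d$ distinct primes. For (iv)(a), note that $X\in\bZ^d$ makes $\widehat{\mu}$ periodic with period $2\pi$ in each coordinate, so every $z\in\bR^d$ agrees modulo $2\pi\bZ^d$ with some $z'\in[0,2\pi)^d\subset\bR_{\ge 0}^d$, and then $\widehat{\mu}(z)=\widehat{\mu}(z')\ne 0$ because $\law((z')^TX)$ is infinitely divisible with zero-free characteristic function $t\mapsto\widehat{\mu}(tz')$.

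The only genuinely delicate step is the injective-pushforward argument in (iv) $\Rightarrow$ (i): it is precisely the $\bQ$-linear independence of $a_1,\ldots,a_d$ that prevents cancellation between positive and negative atoms of $\nu$ under projection, so that the sign structure is faithfully transported from the multivariate quasi-L\'evy measure down to its one-dimensional image. Every other ingredient reduces to standard facts---periodicity of characteristic functions of lattice distributions, closedness of the infinitely divisible class under weak convergence, and uniqueness of the characteristic triplet from Theorem~\ref{lk}.
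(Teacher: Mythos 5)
Your proof is correct, and it follows exactly the route the paper indicates (the paper itself defers the details to the cited Cramér--Wold article but notes that the argument rests on Theorem \ref{gitter}): zero-freeness plus the lattice theorem yields a finite quasi-L\'evy measure concentrated on $\bZ^d\setminus\{0\}$, and the $\bQ$-linear independence of the coordinates of $a$ makes $x\mapsto a^Tx$ injective on $\bZ^d$, so no cancellation of signed atoms can occur under the projection of Lemma \ref{projection}, which is precisely the decisive point. The auxiliary steps --- upgrading (iii) to all of $\bR_{\geq 0}^d$ by scaling and by density of $\bQ_{\geq 0}^d$ together with closedness of the infinitely divisible class under weak convergence, and the $2\pi$-periodicity of $\widehat{\mu}$ to obtain zero-freeness on all of $\bR^d$ --- are all sound.
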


This result is striking in the sense that a Cram\'er-Wold device does not hold in full generality for infinite divisibility of $\bR^d$-valued distributions. Indeed, it is even known that for every $\alpha \in (0,1)$ there exists a  $d$-dimensional random vector $X$ such that $\law(a^T X)$ is $\alpha$-stable for all $a\in \bR^d$, but that $\law(X)$ is not infinitely divisible (see \cite[Sect. 2.2]{SamorodnitskyTaqqu}). The proof of Corollary \ref{cor-cramerwold} heavily relies on Theorem \ref{gitter}, and we refer to \cite[Thm. 4.1]{cramerwold} for the details of the proof.

In view of Theorem \ref{gitter} it is natural to ask if every distribution $\mu$ whose characteristic function is zero-free must be quasi-infinitely divisible. That this is not the case was shown in \cite[Ex. 3.3]{lindner} by giving a counter example. Let us give another counter example to this fact:

\begin{example} \label{ex-counter}
Consider the function $\varphi:\bR \to (0,\infty)$ given by $\varphi(x) = \exp (1- \re^{|x|})$. Then $\varphi$ is continuous on $\bR$, $\varphi(0) = 1$ and $\varphi''(x) = (\re^x - 1) \exp (1+x-\re^x)> 0$ for all $x>0$. Hence $\varphi$ is strictly convex on $(0,\infty)$. Since $\varphi(x)$ tends to 0 as $|x|\to \infty$, P\'olya's theorem implies that $\varphi$ is the characteristic function of an absolutely continuous distribution $\mu$ on $\bR$. The distinguished logarithm of $\mu$ is given by $\Psi_\mu(x) = 1 - \re^{|x|}$. Hence $\lim_{t\to\infty} t^{-2} \Psi_\mu(t) = -\infty$. It follows that $\mu$ cannot be quasi-infinitely divisible, for if it were, then $\lim_{t\to\infty} t^{-2} \Psi_\mu(t) = -A/2$ as shown in the proof of Lemma \ref{nnd}, where $A\in \bR$ denotes the Gaussian variance of $\mu$. Hence we have a one-dimensional distribution $\mu$ that is not quasi-infinitely divisible but whose characteristic function is zero-free. Using  Lemma \ref{projection} it then is easily seen that $\delta_0^{\otimes (d-1)} \otimes \mu$ is a distribution in $\bR^d$ that is not quasi-infinitely divisible but whose characteristic function is zero-free. Further examples of such distributions can be constructed using Proposition \ref{prop-indep} below.
\end{example}

Let us now give some concrete examples of quasi-infinitely divisible distributions on $\bZ^d$:

\begin{example}
(a)
Consider the distribution $\mu \coloneqq a \delta_{(0,0)} + b \delta_{(1,0)} + c \delta_{(0,1)}$ on $\bR^2$ with $a,b,c \in (0,1)$.
If $\max\{a,b,c\} > 1/2$, then $\mu$ is quasi-infinitely divisible by Cuppens' result (cf. Theorem \ref{cuppens}).
If $\max\{a,b,c\} \leq 1/2$, then $\mu$ cannot be quasi-infinitely divisible, since $\widehat{\mu}$ is not zero-free.
Indeed, for $(x,y) \in \bR^2$,
$ \widehat{\mu}(x,y)
= a + b e^{\ii x} + c e^{\ii y} = 0$
if and only if
$ b e^{\ii x}  = -a - c e^{\ii y}$.
The set $\{be^{\ii x}: x \in \bR\}$ describes a circle in the complex plane with center $0$ and radius $b$, intersecting the real axis at the points $-b$ and $b$, and $\{-a-ce^{\ii y}: y \in \bR\}$ describes a circle in the complex plane with center $-a$ and radius $c$, intersecting the real axis at the points $-a-c$ and $-a+c$.
Now, since $\max\{a,b,c\} \leq 1/2$, this implies that $a \leq b + c$, $b \leq a + c$ and $c\leq a+b$, and hence $-a-c \leq -b \leq -a+c\leq b$.
Therefore, the two circles intersect or touch each other, so they share at least one common point, which corresponds to a zero of the characteristic function of $\mu$.\\
(b)
Let $p, q \in (0,1) \setminus \{1/2\}$ and consider the distributions $\mu_1 \coloneqq (1-p) \delta_{(0,0)}+ p \delta_{(1,0)}$ and $\mu_2 \coloneqq (1-q) \delta_{(0,0)} + q \delta_{(0,1)}$.
Due to Theorem \ref{cuppens}, the distributions $\mu_1$ and $\mu_2$ are quasi-infinitely divisible.
Hence, also the  distribution
$ \mu \coloneqq \mu_1 \ast \mu_2 = (1-p)(1-q) \delta_{(0,0)} + p(1-q) \delta_{(1,0)} + (1-p)q \delta_{(0,1)} + pq \delta_{(1,1)}$ is quasi-infinitely divisible. Observe that it is possible to choose $p$ and $q$ such that $\max \{pq, (1-p) (1-q) , p(1-q), q(1-p)\} < 1/2$.\\
(c)
Let $\mu_1, \ldots, \mu_d$ be distributions on $\bR$ supported in $\bZ$ such that $\Re \, \widehat{\mu_k}(z) > 0$ for all  $k \in \{1, \ldots, d\}$ and $z \in \bR^d$.
Examples of such distributions can be obtained as  symmetrisations of distributions supported in $\bZ$, whose characteristic functions have no zeros.
Let $0 \leq p_1, \ldots, p_d \leq 1$ be such that $\sum_{k=1}^{d}p_k = 1$ and define the distribution
\begin{align*}
\mu \coloneqq \sum_{k=1}^{d} p_k \sum_{l\in \bZ} \mu_k(\{l\}) \delta_{le_k}
\end{align*}
on $\bR^d$, where $e_k$ is the k-th unit vector.
Then for $z =(z_1,\ldots, z_d)^T \in \bR^d$ we have
\begin{align*}
\widehat{\mu}(z)
&= \sum_{l \in \bZ^d} \mu(\{l\})e^{\ii \langle z,l \rangle}
= \sum_{k = 1}^d \sum_{l \in \bZ \setminus \{0\}} p_k \mu(\{le_k\})e^{\ii \langle z, le_k \rangle} + \mu(\{0\}) \\
&= \sum_{k = 1}^d p_k \sum_{l \in \bZ \setminus\{0\}} \mu_k(\{l\})e^{\ii l z_k}
+ \sum_{k=1}^{d} p_k \mu_k(\{0\})
= \sum_{k=1}^d p_k \widehat{\mu_k}(z_k) \neq 0
\end{align*}
since $\Re \, \widehat{\mu_k}(z_k) > 0$.
In particular, $\widehat{\mu}$ has no zeros, so $\mu$ is quasi-infinitely divisible by Theorem~\ref{gitter}.\\
(d)
Let $p \in [0, 1/4)$ and consider the symmetric distribution $\sigma \coloneqq p \delta_{-1} +(1-2p) \delta_0 + p \delta_1$.
We have $\widehat{\sigma}(z) = pe^{-\ii z} + 1-2p + pe^{\ii z} = 1-2p+2p\cos(z) > 0$ for $z \in \bR$ since $p < 1/4$.
With the construction in part (c), choosing $\mu_1 = \mu_2 = \sigma$ it follows that the distribution \begin{align*}
\mu
\coloneqq rp \delta_{(-1,0)} + rp \delta_{(1,0)} + (1-r)p \delta_{(0,-1)} + (1-r)p \delta_{(0,1)} + (1-2p) \delta_{(0,0)}
\end{align*}
is quasi-infinitely divisible for any $r\in [0,1]$.
\end{example}

Similar as in the one-dimensional case in \cite[Cor. 8.3]{lindner}, one can show that a consequence of Theorem~\ref{gitter} is that every factor of a quasi-infinitely divisible distribution that is supported in $\bZ^d$ is also quasi-infinitely divisible.

\begin{corollary}
Let $\mu$ be a quasi-infinitely divisible distribution supported in $\bZ^d$.
If $\mu_1$ and $\mu_2$ are distributions on $\bR^d$ such that $\mu = \mu_1 \ast \mu_2$, then also $\mu_1$ and $\mu_2$ are quasi-infinitely divisible.
\end{corollary}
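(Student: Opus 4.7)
The plan is to exploit Theorem \ref{gitter} (and its extension Corollary \ref{c-lattice}) together with the characterisation of factors of lattice-supported distributions. Since $\mu = \mu_1\ast\mu_2$ implies $\widehat{\mu}(z) = \widehat{\mu_1}(z)\,\widehat{\mu_2}(z)$ for all $z\in \bR^d$, and since $\mu$ is quasi-infinitely divisible and supported in $\bZ^d$, Theorem \ref{gitter} gives $\widehat{\mu}(z)\neq 0$ for all $z\in \bR^d$. Consequently, neither $\widehat{\mu_1}$ nor $\widehat{\mu_2}$ has any zeroes on $\bR^d$. So the strategy is to reduce $\mu_1,\mu_2$ to (translates of) lattice-supported distributions and then invoke Corollary \ref{c-lattice}.

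The key step is to show that $\mu_1$ and $\mu_2$ are supported in translated copies of $\bZ^d$. For this, observe that $\mu_1\ast\mu_2(\bZ^d) = 1$ together with Fubini yields a set $A\in \cB(\bR^d)$ with $\mu_1(A)=1$ such that $\mu_2(\bZ^d - x) = 1$ for every $x\in A$. Fixing any $x_0\in A$ and using that, for each $x\in A$, the set $(\bZ^d-x_0)\cap(\bZ^d-x)$ has full $\mu_2$-measure (hence is non-empty), we conclude $x - x_0 \in \bZ^d$ for all $x\in A$. Thus $\mu_1$ is concentrated on $x_0 + \bZ^d$ and, symmetrically, $\mu_2$ is concentrated on some $y_0 + \bZ^d$, with $x_0+y_0 \in \bZ^d$ (adjusting $x_0,y_0$ by elements of $\bZ^d$ if desired).

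With this at hand, the conclusion is immediate: by Corollary \ref{c-lattice} applied to $M = I \in \bR^{d\times d}$, $b = x_0$ (resp.\ $b=y_0$), a distribution on $\bR^d$ supported in $x_0 + \bZ^d$ is quasi-infinitely divisible if and only if its characteristic function has no zeroes on $[0,2\pi]^d$. Since $\widehat{\mu_1}$ and $\widehat{\mu_2}$ are zero-free on $\bR^d$ by the factorisation argument above, both $\mu_1$ and $\mu_2$ are quasi-infinitely divisible.

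The only genuinely non-trivial point is the second paragraph, i.e.\ showing that the factors of a $\bZ^d$-supported probability distribution are themselves supported in a translate of $\bZ^d$. Everything else reduces cleanly to results already established in the paper. As a matter of presentation I would state the lattice-support step as a short lemma or even embed it directly, since its proof fits in a few lines and makes the application of Corollary \ref{c-lattice} transparent.
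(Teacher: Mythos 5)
Your proposal is correct and follows essentially the same route as the paper: the paper's proof likewise rests on Corollary \ref{c-lattice} together with the fact that the factors of a $\bZ^d$-supported distribution must be supported in $\bZ^d+k$ and $\bZ^d-k$ for some $k\in\bR^d$, combined with the zero-freeness of $\widehat{\mu_1}$ and $\widehat{\mu_2}$ inherited from $\widehat{\mu}=\widehat{\mu_1}\widehat{\mu_2}$. Your Fubini argument correctly supplies the lattice-support step that the paper only asserts (by reference to the one-dimensional case in the literature).
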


\begin{proof}
 This follows in  complete analogy to the proof of \cite[Cor. 8.3]{lindner}, and is an easy consequence of Corollary \ref{c-lattice} and the fact that if $\mu$ is supported in $\bZ^d$, then there must be $k\in \bR^d$ such that $\mu_1$ is supported in $\bZ^d + k$ and $\mu_2$ is supported in $\bZ^d - k$.
\end{proof}

Theorem \ref{gitter} is nice since it gives a complete characterisation of quasi-infinite divisibility in terms of the characteristic function. In the univariate setting Berger \cite[Thm. 4.12]{berger} extended this characterisation of quasi-infinitely divisibility to a greater class of distributions, which we now state without proof:

\begin{theorem} \label{thm-berger}
Let $\mu$ be a distribution on $\bR$ of the form $\mu = \mu_d + \mu_{ac}$ with an absolutely continuous measure $\mu_{ac}$ and a non-zero discrete measure $\mu_d$ which is supported in the lattice $h\bZ + d$ for some $r \in \bR$, $h > 0$, such that $\widehat{\mu_d}(z) \neq 0$ for all $z \in \bR$.
Then $\mu$ is quasi-infinitely divisible if and only if $\widehat{\mu}(z) \neq 0$ for all $z \in \bR$. In that case, the Gaussian variance of $\mu$ is zero and the quasi-L\'evy measure $\nu$ satisfies $\int_{[-1,1]} |x|\, |\nu|(\di x) < \infty$.
\end{theorem}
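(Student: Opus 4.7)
The ``only if'' direction is immediate: if $\mu$ is quasi-infinitely divisible, then $\widehat{\mu} = \widehat{\mu_1}/\widehat{\mu_2}$ for infinitely divisible $\mu_1,\mu_2$, so $\widehat{\mu}$ is zero-free on $\bR$. For the converse, assume $\widehat{\mu}$ is zero-free. The plan is to factorise
\begin{equation*}
\widehat{\mu}(z) = \widehat{\mu_d}(z)\bigl(1 + \widehat{\mu_{ac}}(z)/\widehat{\mu_d}(z)\bigr)
\end{equation*}
and treat the two factors by two Wiener--L\'evy-type arguments: for the discrete factor one essentially reproduces the mechanism of Theorem~\ref{gitter} (periodic version), while the ratio factor is handled by the Wiener--L\'evy theorem on the non-compact group $\bR$.

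Set $p \coloneqq \mu_d(\bR) \in (0,1]$. The case $p=1$ reduces directly to Corollary~\ref{c-lattice}, so assume $p<1$. The probability measure $\mu_d/p$ is supported in $h\bZ + r$ and has zero-free characteristic function, whence by Corollary~\ref{c-lattice}
\begin{equation*}
\log\widehat{\mu_d}(z) = \log p + \ii\gamma_d z + \int_{\bR}(e^{\ii z x} - 1)\,\nu_d(\di x)
\end{equation*}
with a finite quasi-L\'evy measure $\nu_d$ on $h\bZ\setminus\{0\}$ and drift $\gamma_d\in h\bZ + r$. Moreover, $G(z)\coloneqq e^{-\ii r z}\widehat{\mu_d}(z)$ is a continuous, $2\pi/h$-periodic, zero-free function with Fourier coefficients $(\mu_d(\{kh+r\}))_{k\in\bZ}\in\ell^1(\bZ)$, so by the Wiener--L\'evy theorem on the circle $1/G$ also has absolutely summable Fourier coefficients, equivalently $1/\widehat{\mu_d}(z) = \widehat{\sigma_d}(z)$ for a finite signed measure $\sigma_d$ supported in $h\bZ - r$. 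Therefore $\mu_d \ast \sigma_d = \delta_0$, and
\begin{equation*}
\widehat{\mu}(z)/\widehat{\mu_d}(z) = 1 + \widehat{\eta}(z),\qquad \eta \coloneqq \mu_{ac}\ast\sigma_d,
\end{equation*}
where $\eta$ is a finite absolutely continuous signed measure, $1+\widehat{\eta}$ is zero-free on $\bR$, and $\widehat{\eta}\in C_0(\bR)$ by Riemann--Lebesgue.

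Now I apply the Wiener--L\'evy theorem on $\bR$: since the Radon--Nikodym density of $\eta$ lies in $L^1(\bR)$ and the principal branch $F(w)=\log(1+w)$ is holomorphic on the open set $\bC\setminus(-\infty,-1]$, which is a neighbourhood of the compact set $\widehat{\eta}(\bR)\cup\{0\}$ (not containing $-1$ since $1+\widehat{\eta}$ is zero-free), with $F(0)=0$, there is a finite absolutely continuous signed measure $\rho$ on $\bR$ with $\log(1+\widehat{\eta}(z)) = \widehat{\rho}(z)$ for every $z\in\bR$, the left-hand side being the distinguished logarithm normalised to equal $-\log p$ at $z=0$, so that $\rho(\bR)=-\log p$. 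Combining the two factorisations, the constants $\pm\log p$ cancel and one arrives at
\begin{equation*}
\log\widehat{\mu}(z) = \ii\gamma_d z + \int_{\bR}(e^{\ii z x} - 1)\,(\nu_d + \rho)(\di x), \qquad z\in\bR,
\end{equation*}
which is a L\'evy--Khintchine type representation in drift form. By Theorem~\ref{lk}, $\mu$ is quasi-infinitely divisible with Gaussian variance $0$ and quasi-L\'evy measure $\nu_d + \rho$; as the latter is a finite signed measure, $\int_{[-1,1]}|x|\,|\nu_d+\rho|(\di x)<\infty$ is automatic.

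The main obstacle is the rigorous deployment of the non-periodic Wiener--L\'evy theorem together with the careful bookkeeping of distinguished logarithms so that the additive constants at $z=0$ cancel and the resulting measure $\rho$ is real- rather than complex-valued; the latter follows either from Theorem~\ref{clk} or directly from the Hermitian symmetry $\widehat{\eta}(-z)=\overline{\widehat{\eta}(z)}$ combined with uniqueness of the distinguished logarithm.
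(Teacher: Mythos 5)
The paper itself states this theorem without proof, deferring to Berger \cite[Thm.~4.12]{berger}, and your skeleton --- remove the discrete factor via the periodic Wiener--L\'evy theorem and Corollary~\ref{c-lattice}, then treat $\widehat{\mu}/\widehat{\mu_d}=1+\widehat{\eta}$ with the Wiener--L\'evy theorem on $L^1(\bR)$ --- is indeed the right starting point. But the step you yourself flag as ``the main obstacle'' contains a genuine error, not just a bookkeeping issue. You invoke the principal branch of $\log(1+w)$ on $\bC\setminus(-\infty,-1]$ and assert this is a neighbourhood of $\widehat{\eta}(\bR)\cup\{0\}$ ``since $1+\widehat{\eta}$ is zero-free''. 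Zero-freeness only excludes the single value $\widehat{\eta}(z)=-1$; it does not prevent $\widehat{\eta}(z)\in(-\infty,-1)$, i.e.\ $\widehat{\mu}(z)/\widehat{\mu_d}(z)$ being a negative real number, which happens routinely (e.g.\ $\mu_d=\tfrac14\delta_0$ and $\mu_{ac}$ symmetric with $\widehat{\mu_{ac}}(z_0)$ near $-\tfrac34$ gives a ratio near $-2$ while $\widehat{\mu}$ stays zero-free). The correct tool is the index-zero version of the Wiener--L\'evy theorem: a zero-free $1+\widehat{\eta}$ with $\eta$ a finite a.c.\ signed measure equals $\exp(c+\widehat{\rho})$ with $\rho\in L^1(\bR)$ \emph{if and only if} the winding number of the closed curve $z\mapsto 1+\widehat{\eta}(z)$ around the origin vanishes, and this index is not automatically zero (e.g.\ $((z-\ri)/(z+\ri))^2=1+\widehat{f}$ for some $f\in L^1(\bR)$ is zero-free, Hermitian symmetric, tends to $1$ at $\pm\infty$, and has index $2$).

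That this gap is fatal to the argument as written, and not merely a missing verification, is shown by the paper's own Remark~\ref{continuity}: the distribution $\mu=0.001\,\delta_0+0.999\,N(1,1)$ satisfies the hypotheses of the theorem with $\mu_d=0.001\,\delta_0$, yet its quasi-L\'evy measure is \emph{infinite}. Your construction would always produce $\nu=\nu_d+\rho$ with $\nu_d$ and $\rho$ finite, hence a finite quasi-L\'evy measure --- a false conclusion. For this $\mu$ one can check directly that the winding number of $\widehat{\mu}/\widehat{\mu_d}=1000\,\widehat{\mu}$ around $0$ equals $2$, so $\log(1+\widehat{\eta})$ is not of the form $c+\widehat{\rho}$ with $\rho\in L^1(\bR)$. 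The missing idea is to first factor out the index, e.g.\ writing $1+\widehat{\eta}(z)=\bigl((1-\ri z)/(1+\ri z)\bigr)^{n}\bigl(1+\widehat{\eta_0}(z)\bigr)$ with $1+\widehat{\eta_0}$ of index zero; the prefactor is a quotient of characteristic functions of exponential laws and contributes a quasi-L\'evy component of the type $\pm x^{-1}e^{-x}\1_{(0,\infty)}(x)\,\di x$, which is infinite but satisfies $\int_{[-1,1]}|x|\,|\nu|(\di x)<\infty$ --- precisely the reason the theorem asserts only this weaker integrability rather than finiteness of $\nu$. Everything else in your proposal (the Wiener $1/f$ step for $\widehat{\mu_d}$, the cancellation of the constants $\pm\log p$, realness of the resulting measure via Theorem~\ref{clk}) is sound.
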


It is remarkable that the quasi-L\'evy measure $\nu$ in Theorem \ref{thm-berger} can indeed be infinite although the distribution $\mu$ there has atoms; a concrete example for this phenomenon will be given in Example \ref{continuity} below. A special case of Theorem \ref{thm-berger} is when $\mu_d = p \delta_x$ for some $x\in \bR$ and $p>0$. Then $\widehat{\mu_d}(z) \neq 0$ for all $z\in \bR$ and a distribution of the form $\mu(\di x) = p \delta_x (\di x) + (1-p) f(x) \, \di x$, where $f:\bR \to [0,\infty)$ is integrable with integral~1, is quasi-infinitely divisible if and only if $\widehat{\mu}(z) \neq 0$ for all $z\in \bR$, cf. \cite[Thm. 4.6]{berger}. Further applications of Theorem \ref{thm-berger} include convex combinations of normal distributions: let $\mu = \sum_{i=1}^n p_i N(b_i,a_i)$, where $0<p_1,\ldots, p_n < 1$, $\sum_{i=1}^n p_i = 1$, $0<a_1 < a_2 < \ldots <a_n$ and $b_1,\ldots, b_n \in \bR$, where $N(b_i,a_i)$ denotes the normal distribution with mean $b_i$ and variance $a_i$. Then $\mu$ is quasi-infinitely divisible if and only if $\widehat{\mu}(z) \neq 0$ for all $z\in \bR$, as shown in \cite[Rem. 4.12]{berger}. Observe that the latter condition is in particular satisfied when additionally $b_1=\ldots = b_n=0$, and one can even show that $\sum_{i=1}^n p_i N(0,a_i)$ is quasi-infinitely divisible, even when some of the variances $a_i$ coincide, cf. \cite[Ex. 4.16]{berger}.

It seems likely that Theorem \ref{thm-berger} continues to hold in the multivariate setting, but so far we have not proved that. We intend to invest this case in future work. For the moment, we content ourselves with a recipe for constructing multivariate quasi-infinitely divisible distributions from independent one-dimensional quasi-infinitely divisible distributions.

\begin{proposition} \label{prop-indep}
Let $X_1, \ldots, X_d$ be independent real-valued random variables and let $X = (X_1, \ldots, X_d)^T$.
Then the law $\cL(X)$ of $X$ is quasi-infinitely divisible if and only if $\cL(X_k)$ is quasi-infinitely divisible for all $k \in \{1, \ldots, d\}$.
In this case, if $(A, \nu, \gamma)$ denotes the (standard) characteristic triplet of $\law(X)$ and $(a_k, \nu_k, \gamma_k)$ the (standard) characteristic triplet of $\law(X_k)$, then
\begin{align*}
A = \begin{pmatrix}
a_1 & 0 & \ldots & 0\\
0 & a_2 & \ddots & \vdots\\
\vdots & \ddots & \ddots & 0\\
0 & \ldots & 0& a_d
\end{pmatrix},
\quad
\gamma =
\begin{pmatrix}
\gamma_1 \\ \vdots \\ \gamma_d
\end{pmatrix}
\quad
\mbox{and} \quad
\nu = \sum_{k=1}^d  \delta_0^{\otimes (k-1)}  \otimes \nu_k \otimes \delta_0^{\otimes (d-k)} .
\end{align*}
\end{proposition}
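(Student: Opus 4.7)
My plan is to handle necessity and sufficiency separately. For necessity, suppose $\law(X)$ is quasi-infinitely divisible. Applying Lemma \ref{projection} with $M=e_k^T\in\bR^{1\times d}$ (the $k$-th standard unit vector written as a row) and $b=0$ immediately shows that $\law(X_k)=\law(e_k^T X)$ is quasi-infinitely divisible for every $k\in\{1,\ldots,d\}$, so this direction requires no further work.

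For sufficiency, assume $\law(X_k)\sim\qid(a_k,\nu_k,\gamma_k)$ for each $k$. First I would verify that the candidate object
\[
\nu := \sum_{k=1}^d \delta_0^{\otimes(k-1)}\otimes\nu_k\otimes\delta_0^{\otimes(d-k)}
\]
is a well-defined quasi-L\'evy type measure on $\bR^d$, constructed separately on positive and negative parts: each $\nu_k^\pm$ is a Borel measure on $\bR$ that is finite away from $0$, so each product $\delta_0^{\otimes(k-1)}\otimes\nu_k^\pm\otimes\delta_0^{\otimes(d-k)}$ is a Borel measure on $\bR^d$ supported in the $k$-th coordinate axis. Because different axes intersect only at the origin and $\nu_k^+\perp\nu_k^-$ on each fixed axis, the summed positive and negative parts are mutually singular on $\bR^d\setminus\{0\}$, and
\[
\int_{\bR^d}(1\wedge|y|^2)\,|\nu|(\di y)=\sum_{k=1}^d\int_{\bR}(1\wedge|x|^2)\,|\nu_k|(\di x)<\infty.
\]
The matrix $A$ is diagonal and hence trivially symmetric.

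Next, by independence $\widehat{\law(X)}(z)=\prod_{k=1}^d\widehat{\law(X_k)}(z_k)$, and since the characteristic function of a quasi-infinitely divisible distribution is zero-free, the additivity of the distinguished logarithm gives $\log\widehat{\law(X)}(z)=\sum_{k=1}^d\Psi_{\law(X_k)}(z_k)$. Since $\nu$ is concentrated on $\bigcup_k\bR e_k$ and the L\'evy--Khintchine integrand at $y=xe_k$ reduces to its one-dimensional counterpart at $(z_k,x)$ (because $\langle z,xe_k\rangle=z_k x$ and $|xe_k|=|x|$), a direct computation yields
\[
\int_{\bR^d}\bigl(e^{\ii\langle z,y\rangle}-1-\ii\langle z,y\rangle\1_{[0,1]}(|y|)\bigr)\nu(\di y)=\sum_{k=1}^d\int_{\bR}\bigl(e^{\ii z_k x}-1-\ii z_k x\1_{[0,1]}(|x|)\bigr)\nu_k(\di x).
\]
Together with $\langle z,Az\rangle=\sum_k a_k z_k^2$ and $\langle\gamma,z\rangle=\sum_k\gamma_k z_k$, this shows that the right-hand side of \eqref{lkf} evaluated at the candidate triplet $(A,\nu,\gamma)$ reproduces $\log\widehat{\law(X)}(z)$. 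Theorem \ref{lk} then yields quasi-infinite divisibility of $\law(X)$, and its uniqueness statement identifies the characteristic triplet as claimed.

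The main technical obstacle is the careful bookkeeping around the quasi-L\'evy type measure $\nu$, specifically the positive/negative-part construction and the verification that the different coordinate-axis contributions are mutually singular so that the total-variation estimate cleanly reduces to the one-dimensional integrability conditions. Everything else is a direct manipulation of L\'evy--Khintchine exponents combined with independence.
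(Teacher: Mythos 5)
Your proposal is correct and follows essentially the same route as the paper: necessity via Lemma \ref{projection} applied to the coordinate projections, and sufficiency by multiplying the one-dimensional characteristic exponents and invoking Theorem \ref{lk} with its uniqueness statement. The paper compresses the sufficiency direction into ``it is easy to see''; your verification that $\nu$ is a quasi-L\'evy type measure and that the axis-supported integrals reduce to their one-dimensional counterparts is exactly the bookkeeping that remark leaves implicit.
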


\begin{proof}
That quasi-infinite divisibility of $\law(X)$ implies quasi-infinite divisibility of $\law(X_k)$ for $k=1,\ldots, d$ is clear from Lemma \ref{projection}. Conversely, let $\cL(X_k) \sim \qid(a_k,\nu_k, \gamma_k)$ for $k \in \{1, \ldots, d\}$ with independent $X_1,\ldots, X_d$. Using
$\widehat{\law(X)}(z)
= \prod_{k=1}^d \widehat{\law(X_k)}(z_k)$
for all $z = (z_1, \ldots, z_d)^T \in \bR^d$ it is easy to see that $\mu$ has a L\'evy--Khintchine type representation as in \eqref{lkf} with $A$, $\nu$ and $\gamma$ as given in the theorem, and hence that $\law(X)$ is quasi-infinitely divisible with standard characteristic triplet $(A,\nu,\gamma)$.
\end{proof}

Using Lemma \ref{projection}, the following is now immediate:

\begin{corollary}
Let $X = (X_1, \ldots, X_d)^T$ with independent real-valued random variables $X_1, \ldots, X_d$ such that $\cL(X_k)$ is quasi-infinitely divisible for each $k\in \{1,\ldots, d\}$.
Let further $n \in \bN$, $M \in \bR^{n \times d}$ and $b \in \bR^{n}$.
Then also the distribution $\cL(MX+b)$ is quasi-infinitely divisible.
\end{corollary}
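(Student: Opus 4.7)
The corollary follows by combining two results already established in the excerpt, so the proof will be essentially a one-line argument, and the plan is just to verify the hypotheses of each step.

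First, I would invoke Proposition \ref{prop-indep}: since $X_1,\ldots,X_d$ are independent real-valued random variables and each $\cL(X_k)$ is quasi-infinitely divisible, this proposition directly gives that the joint law $\cL(X)$ is quasi-infinitely divisible on $\bR^d$ (with the characteristic triplet explicitly described by the block-diagonal Gaussian covariance, the stacked location parameter, and the sum of $\delta_0$-tensored quasi-L\'evy measures). No extra work is needed here; the hypothesis on independence and componentwise quasi-infinite divisibility is exactly what Proposition \ref{prop-indep} assumes.

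Second, I would apply Lemma \ref{projection} to the random vector $X$ with the affine map $U = MX + b$, taking $M \in \bR^{n \times d}$ and $b \in \bR^{n}$ as given. Lemma \ref{projection} states that any affine-linear image of a quasi-infinitely divisible random vector is again quasi-infinitely divisible and even records the resulting characteristic triplet $(A_U, \nu_U, \gamma_U)$. Applied to $\mu = \cL(X)$, this immediately yields that $\cL(MX+b)$ is quasi-infinitely divisible, concluding the proof.

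There is essentially no obstacle: the entire argument is a composition of Proposition \ref{prop-indep} with Lemma \ref{projection}. If desired, one could optionally spell out the resulting characteristic triplet of $\cL(MX+b)$ by substituting the triplet from Proposition \ref{prop-indep} into the formulas of Lemma \ref{projection}, but this is not required by the statement of the corollary.
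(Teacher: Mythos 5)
Your proposal is correct and is exactly the paper's argument: the corollary is stated immediately after Proposition \ref{prop-indep} with the remark that it follows at once from that proposition combined with Lemma \ref{projection}. Nothing further is needed.
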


\section{Conditions for absolute continuity} \label{S4}
\setcounter{equation}{0}

In this section we study absolute continuity of quasi-infinitely divisible distributions and give some sufficient conditions in terms of the characteristic triplet.
Considering an infinitely divisible distribution on $\bR$, Kallenberg \cite[p. 794 f.]{kallenberg} gave a sufficient condition on the L\'evy measure for the distribution to have a smooth Lebesgue density.
The following theorem generalizes this result for quasi-infinitely divisible distributions on $\bR^d$. In its statement, we denote by $S^{d-1} \coloneqq \{x \in \bR^d : |x|=1\}$ the unit sphere in $\bR^d$.

\begin{theorem} \label{abscont}
Let $\mu$ be a quasi-infinitely divisible distribution on $\mathbb{R}^d$ with characteristic triplet $(A, \nu, \gamma)$.
Define
\begin{align*}
G^-(r) \coloneqq \sup_{\xi \in S^{d-1}}\xi^T \left( \int_{|x|\leq r}xx^T \nu^-(\dx)\right) \xi
\quad \textrm{ and } \quad
G^+(r) \coloneqq  \inf_{\xi \in S^{d-1} } \xi^T \left( \int_{|x| \leq r} xx^T \nu^+(\dx)\right) \xi
\end{align*}
for $r >0$.
Suppose that $A$ is strictly positive definite, or that
\begin{align} \label{kallenberg}
\lim_{r \to 0} r^{-2} |\log r|^{-1}G^+(r) \left(
	\frac{1}{3} -
	2 \, \frac{r^2 \nu^-(\{x \in \bR^d: |x| > r\})}{ G^+(r)}
	-\frac{2}{3} \frac{G^-(r)}{G^+(r)}
\right) = \infty
\end{align}
(when $G^+(r) =0$ for small $r>0$ we interpret the left-hand side of \eqref{kallenberg} as 0 and hence \eqref{kallenberg} to be violated).
Then $\mu$ has an infinitely often differentiable Lebesgue density whose derivatives tend to $0$ as $|x| \to \infty$.
\end{theorem}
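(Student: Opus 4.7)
The plan is to prove the stronger quantitative statement that $|\widehat{\mu}(z)|$ decays faster than any polynomial as $|z|\to\infty$. Once this is known, for every multi-index $\alpha\in\bN_0^d$ the function $z\mapsto z^{\alpha}\,\widehat{\mu}(z)$ lies in $L^1(\bR^d)$, so the Fourier inversion formula $f(x)=(2\pi)^{-d}\int_{\bR^d}\re^{-\ri\langle z,x\rangle}\,\widehat{\mu}(z)\,\di z$ gives $\mu$ a Lebesgue density $f$ that may be differentiated under the integral sign to any order, hence $f\in C^\infty(\bR^d)$; the Riemann--Lebesgue lemma applied to each integrable function $(-\ri z)^{\alpha}\widehat{\mu}(z)$ then forces every $\partial^\alpha f$ to vanish at infinity.

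\textbf{Key estimate.} Using $|\widehat{\mu}(z)|^2=\widehat{\mu}(z)\,\widehat{\mu}(-z)$ together with Theorem~\ref{lk}, and decomposing $\nu=\nu^+-\nu^-$, one has
\begin{align*}
-\log|\widehat{\mu}(z)|^2 = \langle z,Az\rangle + 2\int_{\bR^d}(1-\cos\langle z,x\rangle)\,\nu^+(\di x) - 2\int_{\bR^d}(1-\cos\langle z,x\rangle)\,\nu^-(\di x).
\end{align*}
I would fix a truncation radius $r>0$ with $r\leq 1/|z|$ and set $\xi=z/|z|\in S^{d-1}$. On $\{|x|\leq r\}$ the scalar $t=\langle z,x\rangle$ satisfies $|t|\leq 1$, so the elementary inequalities $\tfrac{1}{3}t^2\leq 1-\cos t\leq \tfrac{2}{3}t^2$ on $[-1,1]$ furnish a lower bound for the $\nu^+$-integral and an upper bound for the $\nu^-$-integral there, while $1-\cos\leq 2$ controls the $\nu^-$-integral on $\{|x|>r\}$. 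Combining this with $\int_{|x|\leq r}\langle z,x\rangle^2\,\nu^{\pm}(\di x)=|z|^2\,\xi^T\bigl(\int_{|x|\leq r}xx^T\,\nu^{\pm}(\di x)\bigr)\xi$ and the very definition of $G^{\pm}(r)$ yields
\begin{align*}
-\log|\widehat{\mu}(z)|^2 \geq \langle z,Az\rangle + \frac{2\,G^+(r)}{r^2}\left(\frac{1}{3} - \frac{2\,r^2\,\nu^-(\{|x|>r\})}{G^+(r)} - \frac{2}{3}\,\frac{G^-(r)}{G^+(r)}\right).
\end{align*}

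\textbf{Finishing both cases.} If $A$ is strictly positive definite with smallest eigenvalue $c>0$, then $\langle z,Az\rangle\geq c|z|^2$. Because the quasi-L\'evy integrability $\int_{\bR^d}(1\wedge|x|^2)\,\nu^-(\di x)<\infty$ forces both $G^-(r)\to 0$ and $r^2\nu^-(\{|x|>r\})\to 0$ as $r\downarrow 0$, I may fix $r_0>0$ so small that the whole $\nu^-$-contribution to $-\log|\widehat{\mu}(z)|^2$ is absorbed into $\tfrac{c}{2}|z|^2+C$; discarding the non-negative $\nu^+$-term then leaves $-\log|\widehat{\mu}(z)|^2\geq \tfrac{c}{2}|z|^2-C$, i.e.\ Gaussian decay. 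If instead \eqref{kallenberg} is assumed, I take $r=1/|z|$ (so that $r^2=|z|^{-2}$ and $|\log r|=\log|z|$ for large $|z|$) and discard $\langle z,Az\rangle\geq 0$; the displayed bound then says exactly that $-\log|\widehat{\mu}(z)|^2/\log|z|\to\infty$, so $|\widehat{\mu}(z)|$ decays faster than $|z|^{-N}$ for every $N\in\bN$. Fourier inversion as in the first paragraph finishes the proof.

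\textbf{Principal obstacle.} The real difficulty compared with the classical Kallenberg argument is the sign indefiniteness of the quasi-L\'evy measure: whereas there $\nu^-=0$ and the whole L\'evy part damps $|\widehat{\mu}|$, here the term $-2\int(1-\cos\langle z,x\rangle)\,\nu^-(\di x)$ pushes $|\widehat{\mu}|$ upwards and must be balanced against the damping produced by $\nu^+$. The truncation at scale $r\leq 1/|z|$ is the decisive device: inside $\{|x|\leq r\}$ both integrals become second-order quadratic forms comparable to $G^{\pm}(r)$, while the contribution of $\nu^-$ outside is bounded crudely by $2$ and, after multiplication by $r^2$, vanishes thanks to quasi-L\'evy integrability. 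In dimension $d\geq 2$ the anisotropy of $\nu^{\pm}$ is dealt with by passing to the worst direction $\xi\in S^{d-1}$, which is precisely how $G^+(r)$ (infimum) and $G^-(r)$ (supremum) are defined.
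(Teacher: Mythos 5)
Your proof is correct and follows essentially the same route as the paper: the cosine bounds $\tfrac13 t^2\le 1-\cos t\le\tfrac23 t^2$ on $[-1,1]$, truncation at $r=1/|z|$, identification of the quadratic forms with $G^{\pm}(r)$, and the conclusion via faster-than-polynomial decay of $\widehat\mu$ and Fourier inversion. The only cosmetic differences are that the paper handles the positive-definite case by citing Sato's Lemma 43.11(i) for the $o(|z|^2)$ bound on the integral term rather than re-estimating it, and that your displayed key inequality is literally valid only at $r=1/|z|$ (for $r<1/|z|$ one should keep $|z|^2$ rather than $r^{-2}$), which is exactly the choice you make where it matters.
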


\begin{proof}
Suppose first that $A$ is strictly positive definite and let $\lambda_0 >0$ be the smallest eigenvalue of $A$.
By \cite[Lem. 43.11~(i)]{sato} we have
\begin{align*}
\lim_{|z| \to \infty} |z|^{-2}\int_{\bR^d} \left(e^{\ii \langle z,x \rangle} -1- \ii \langle z,x \rangle \1_{[0,1]}(|x|)\right) \nu^{\pm}(\dx) = 0.
\end{align*}
Since further $- \frac{1}{2}\langle z, A z \rangle
\leq - \frac{\lambda_0}{2} |z|^2 $
for all $z \in \bR^d$, we have $\limsup_{|z| \to \infty}|z|^{-2} \Psi_\mu(z) \leq -\frac{1}{2} \lambda_0,$ so there exists $K> 0$ such that $|\widehat{\mu}(z)| = e^{\Re \Psi_\mu(z)}\leq e^{-{\lambda_0|z|^2}/{4}}$ for all $z \in \bR^d$ with $|z| \geq K$.
Hence, we have $\int_{\bR^d} |\widehat{\mu}(z)| |z|^k \dz < \infty$ for all $k\in \bN$ and the claim follows by \cite[Prop. 28.1]{sato}.

Now, suppose that \eqref{kallenberg} is satisfied.
Using the fact that $\frac{1}{3}y^2 \leq 1-\cos(y) \leq \frac{2}{3} y^2$ for all $y \in [-1,1]$, we estimate
\begin{align*}
\int_{\bR^d} ( 1- \cos \langle z,x \rangle ) \, \nu^+(\dx)
&\geq \frac{1}{3} \int_{|x| \leq 1/|z|} \langle z,x \rangle^2 \, \nu^+(\dx)
= \frac{1}{3} \int_{|x| \leq 1/|z|} z^Txx^Tz \, \nu^+(\dx)\\
&\geq \frac{1}{3}|z|^2 G^+\left(1/|z|\right)
\end{align*}
and similarly
\begin{align*}
\int_{\bR^d} ( 1- \cos \langle z,x \rangle )\, \nu^-(\dx)
&\leq \frac{2}{3} \int_{|x| \leq 1/|z|} \langle z,x \rangle^2 \, \nu^-(\dx) + 2\nu^-(\{x \in \bR^d: |x|> 1/|z|\})\\
&\leq \frac{2}{3}|z|^2 G^-\left(1/|z| \right)
+ 2\nu^-(\{x \in \bR^d: |x|> 1/|z|\})
\end{align*}
for all $z \in \bR^d$.
Hence, for $|z|\geq 1$,
\begin{align*}
- (\log|z|)^{-1} \Re \Psi_\mu(z)
&= (\log|z|)^{-1}  \left( \frac12 \langle z, Az \rangle + \int_{\bR^d} ( 1- \cos \langle z,x \rangle ) \nu(\dx) \right) \\
&\geq (\log|z|)^{-1} \left(
	\int_{\bR^d} ( 1- \cos \langle z,x \rangle ) \nu(\dx)
\right)
\to \infty \textrm{ as } |z| \to \infty
\end{align*}
by assumption.
As a consequence, for every $k \in \mathbb{N}$ there exists $K > 0$ such that
$\Re \Psi_\mu(z) \leq -(k+2)\log|z| $ for all $z \in \bR^d$ with $|z| > K$, and therefore $|\widehat{\mu}(z)| = e^{\Re \Psi_\mu(z)}\leq e^{-(k+2)\log |z|} = |z|^{-(k+2)}$ when $|z| > K$.
This implies that $\int_{\bR^d} |\widehat{\mu}(z)| |z|^k \dz < \infty$ for every $k \in \bN$ and the claim follows.
\end{proof}

\begin{remark} \label{rem-ac}
{\rm
(a) Observe that for fixed $r\in (0,1)$, the matrices $\int_{|x|\leq r} x x^T \nu^{\pm} (\di x) \in \bR^{d\times d}$ are symmetric and non-negative definite and that $G^+(r)$ is the smallest eigenvalue of the matrix $\int_{|x|\leq r} xx^T\, \nu^+(\dx)$ and $G^-(r)$ is the largest eigenvalue of $\int_{|x|\leq r} xx^T\, \nu^-(\dx)$.\\
(b) If $\mu$ is infinitely divisible, then $\nu^-=0$ and hence $G^- (r) = 0$ and \eqref{kallenberg} reduces to
\begin{equation} \label{eq-kallenberg2}
\lim_{r\to 0} r^{-2} |\log r|^{-1}G^+(r) = +\infty.
\end{equation}
Hence, if an infinitely divisible distribution $\mu$ on $\bR^d$ with characteristic triplet $(A,\nu^+,\gamma)$ is such that $A$ is strictly positive definitive or such that \eqref{eq-kallenberg2} is satisfied, then $\mu$ has an infinitely often differentiable Lebesgue density whose derivatives tend to 0 as $|x| \to \infty$. In dimension $d=1$, this reduces to $A>0$ or $\lim_{r\to 0} r^{-2} |\log r|^{-1} \int_{|x|\leq r} |x|^2 \, \nu^+(\di x) = +\infty$, which is Kallenberg's classical condition \cite[p. 794 f.]{kallenberg}. The described multivariate generalisation of Kallenberg's condition  seems to be new even in the case of infinitely divisible distributions.\\
(c) Let $\mu\sim \qid (A,\nu,\gamma)$ in $\bR^d$. A sufficient condition for \eqref{kallenberg} to hold is that \eqref{eq-kallenberg2} is satisfied along with $G^-(r) = o(G^+(r))$ and $r^2 \nu^- (\{x \in \bR^d: |x|> r\} )= o(G^+(r))$ as $r\to 0$, where we used the \lq\lq little o\rq\rq~ Landau symbol notation.\\
(d) Let $\mu\sim \qid (A,\nu,\gamma)$ in $\bR^d$. Define
$$g^-(r) \coloneqq \int_{|x|\leq r} |x|^2 \, \nu^-(\di x) \quad \textrm{ and } \quad
g^+(r) \coloneqq \int_{|x|\leq r} |x|^2 \, \nu^+(\di x)$$
for $r>0$. Then
$$g^{\pm} (r) = \int_{|x|\leq r} \trace (x x^T) \, \nu^{\pm} (\di x) = \trace \left(\int_{|x|\leq r} x x^T \, \nu^{\pm} (\di x)\right).$$
Since the trace of a symmetric $d\times d$-matrix is the sum of its eigenvalues, we observe from~(a) that
$$G^-(r) \leq g^-(r) \leq d \, G^-(r) \quad \textrm{ and } \quad G^+(r) \leq g^+(r).$$
So we can conveniently bound $G^-(r)$ from below and above in terms of $g^-(r)$, in particular, if we replace $G^-(r)$ in \eqref{kallenberg} by $g^-(r)$ (and leave the rest unchanged, in particular we do not replace $G^+(r)$ by $g^+(r)$) then we also obtain a sufficient condition for $\mu$ to have an infinitely often differentiable Lebesgue density with derivatives tending to 0 as $|x|\to \infty$. A similar remark applies to (c) above, where we can replace $G^-(r)$ by $g^-(r)$ (but not $G^+(r)$ by $g^+(r)$).
}
\end{remark}


In \cite[Thm. 7.1]{lindner}, an Orey-type condition (cf. \cite[Prop. 28.3]{sato}) was given for absolute continuity of one-dimensional quasi-infinitely divisible distributions. We can now generalise this to the multivariate setting:

\begin{corollary}
Let $\mu$ be a quasi-infinitely divisible distribution on $\bR^d$ with characteristic triplet $(A,\nu,\gamma)$. With the notations of Theorem \ref{abscont}, suppose that $A$ is strictly positive definite or that there exists some $\beta\in (0,2)$ such that
\begin{align}\label{eq:1}
\liminf_{r \to 0} r^{-\beta} G^+(r) > \limsup_{r\to 0} r^{-\beta} G^-(r) = 0.
\end{align}
Then $\mu$ is absolutely continuous and its Lebesgue density $f$ is infinitely often differentiable with all its derivatives tending to $0$ as $|x| \to \infty$.
\end{corollary}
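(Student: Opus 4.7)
The plan is to reduce the corollary to Theorem~\ref{abscont}. The case in which $A$ is strictly positive definite is handled directly by that theorem, so the real task is to show that the hypothesis~\eqref{eq:1} forces the more elaborate condition~\eqref{kallenberg}.

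From~\eqref{eq:1} I would first extract constants $c>0$ and $\beta\in(0,2)$ such that $G^+(r)\ge c r^{\beta}$ and $G^-(r)=o(r^{\beta})$ as $r\to 0$. These two facts immediately give $G^-(r)/G^+(r)\to 0$ and
\begin{align*}
r^{-2}|\log r|^{-1}G^+(r)\;\ge\; c\,r^{\beta-2}|\log r|^{-1}\;\longrightarrow\;\infty,
\end{align*}
because $\beta<2$. Hence the bracket in~\eqref{kallenberg} tends to $\tfrac13$ and the outer factor tends to $+\infty$, \emph{provided} one can also verify that $r^{2}\nu^-(\{x\in\bR^d:|x|>r\})/G^+(r)\to 0$ as $r\to 0$.

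This last item is the main obstacle, since~\eqref{eq:1} controls only the truncated second moment of $\nu^-$ near the origin, not its tail mass on $\{|x|>r\}$. To bridge the gap I would appeal to Remark~\ref{rem-ac}(d), which gives
\begin{align*}
g^-(r):=\int_{|x|\le r}|x|^2\,\nu^-(\di x)\;\le\; d\,G^-(r)\;=\;o(r^{\beta}),
\end{align*}
and then perform a dyadic decomposition. On each annulus $\{2^k r<|x|\le 2^{k+1}r\}$ the elementary estimate
\begin{align*}
\nu^-\bigl(\{2^k r<|x|\le 2^{k+1}r\}\bigr)\;\le\;(2^k r)^{-2}\,g^-(2^{k+1}r),
\end{align*}
combined with $g^-(s)\le \varepsilon s^{\beta}$ for $s$ below a suitable threshold $r_0$, gives a contribution of order $\varepsilon\cdot 2^{k(\beta-2)}\cdot r^{\beta-2}$. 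Summing the resulting geometric series in $2^{k(\beta-2)}$ (which converges because $\beta<2$) and absorbing the finite leftover tail $\nu^-(\{|x|>r_0\})$ into an $o(r^{\beta})$ term yields $r^2\nu^-(\{|x|>r\})=o(r^{\beta})$. Together with $G^+(r)\ge c r^{\beta}$ this supplies the missing piece, so~\eqref{kallenberg} holds and Theorem~\ref{abscont} delivers the claimed smoothness and decay of the Lebesgue density.
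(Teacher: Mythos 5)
Your proposal is correct and follows essentially the same route as the paper: reduce to Theorem~\ref{abscont} via Remark~\ref{rem-ac}(c), the only non-trivial point being the tail estimate $r^2\nu^-(\{x:|x|>r\})=o(G^+(r))$, which you derive from $g^-(s)\le d\,G^-(s)=o(s^\beta)$ by a Chebyshev bound on dyadic annuli. The paper obtains the same estimate by Stieltjes integration by parts ($\nu^-(\{r<|x|\le 1\})=\int_r^1 s^{-2}\,\mathrm{d}g^-(s)$) rather than dyadic summation, but the two devices are interchangeable here and your bookkeeping (geometric series convergent since $\beta<2$, leftover tail of order $r^2=o(r^\beta)$) is sound.
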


\begin{proof}
The case when $A$ is strictly positive definite is clear, so suppose that \eqref{eq:1} is satisfied with some $\beta\in (0,2)$.
Then clearly $\lim_{r \to 0} r^{-2} |\log r|^{-1}G^+(r) = \infty$ since $\beta < 2$, and $G^-(r) = o(G^+(r))$ as $r\to 0$. By Remark \ref{rem-ac}~(c) it is hence sufficient to show that $r^2 \nu^- (\{x \in \bR^d: |x|> r\}) = o(G^+(r))$ as $r\to 0$. To see this, define $g^-(r) \coloneqq \int_{|x|\leq r} |x|^2 \, \nu^-(\di x)$. Using integration by parts and $g^-(r) \leq d \, G^-(r)$ by Remark \ref{rem-ac}~(d), we then obtain
\begin{align*}
 \nu^-(\{x \in \bR^d:r <  |x| \leq 1\})
&=\int_{r}^1 s^{-2} \, \textrm{d}g^-(s)\\
& =g^-(1) -r^{-2}g^-(r) - \int_{r}^1 g^-(s)\,  \textrm{d}s^{-2}\\
&\leq d \left( G^-(1) + 2 \int_{r}^1 G^-(s) s^{-3}  \, \textrm{d}s\right)
\end{align*}
for $r\in (0,1]$. By \eqref{eq:1} for every $\varepsilon > 0$ we can find an $r_\varepsilon \in (0,1)$ such that
$G^-(s) \leq \varepsilon s^\beta$ for all $s\in (0,r_\varepsilon]$, so that we continue to estimate for $r<r_\varepsilon$
$$ \nu^-(\{x \in \bR^d:r <  |x| \leq 1\}) \leq d G^-(1) + 2 d G^-(1) r_\varepsilon^{-3}(1-r_\varepsilon) + 2 \varepsilon  d \int_r^{r_\varepsilon} s^{\beta-3} \, \di s .$$
This implies
$$r^2 \frac{\nu^-(\{x \in \bR^d:r <  |x| \leq 1\})}{G^+(r)} \leq r^2 d \frac{G^-(1) + 2 G^-(1) r_\varepsilon^{-3} }{G^+(r)} + 2\varepsilon d \frac{r^{\beta}}{(2-\beta) G^+(r)}. $$
Denoting the limit inferior on the left hand side of \eqref{eq:1} by $L$, and observing that $\lim_{r\to 0} r^{-2} G^+(r) = \infty$ by \eqref{eq:1}, we obtain
$$\limsup_{r\to 0} \left( r^2 \nu^-(\{ x\in \bR^d : r < |x|\leq 1\}) / G^+(r) \right) \leq \frac{2\varepsilon d}{(2-\beta) L},$$ and since $\varepsilon > 0$ was arbitrary we see $\lim_{r\to 0} r^2 \nu^-(\{ x\in \bR^d : r < |x|\leq 1\}) / G^+(r) = 0$. That $\lim_{r\to 0} r^2 \nu^-(\{ x\in \bR^d : |x| > 1\}) / G^+(r) = 0$ is clear so that we obtain $r^2 \nu^- (\{x \in \bR^d: |x|> r\}) = o(G^+(r))$ as $r\to 0$, finishing the proof.
\end{proof}

\begin{example}
(a) Let $\mu$ be a non-trivial strictly $\alpha$-stable rotation invariant distribution on $\bR^d$, where $\alpha \in (0,2)$.
It is well-known that $\mu$ has a $C^{\infty}$-density with all derivatives vanishing at infinity.
Let us check that this can also be derived from Theorem \ref{abscont}.
The L\'evy measure $\nu$ of $\mu$ is given by $\nu(\dx) = C|x|^{-(d+\alpha)} \dx$ for some constant $C > 0$, see \cite[Ex. 62.1]{sato}.
For $r >0$ let $G^+(r) \coloneqq \inf_{\xi \in S^{d-1} } \xi^T\int_{|x| \leq r} xx^T \nu(\dx)\xi$.
Since  $\mu$ is infinitely divisible, condition \eqref{kallenberg} of Theorem \ref{abscont} reduces to \eqref{eq-kallenberg2}.
In order to show that \eqref{eq-kallenberg2} is satisfied, let $r > 0$.
For $k, j \in \{1, \ldots, d\}$, $k \neq j$ we obtain
\begin{align*}
\int_{|x|\leq r} x_k x_j \nu(\dx)
= C \int_{|x|\leq r} \frac{x_kx_j}{|x|^{d+\alpha}} \dx = 0.
\end{align*}
To see that, by symmetry it suffices to consider the case $k = {d}$.
Then we have
\begin{align*}
\int_{|x|\leq r} \frac{x_dx_j}{|x|^{d+\alpha}} \dx
= \int_{-r}^r x_d \int\limits_{\substack{x' \in \bR^{d-1}: \\ |x'| \leq \sqrt{r^2-x_d^2}}} \frac{x_j}{\left(x_d^2+(x')^2\right)^{(d+\alpha)/2}} \dx' \dx_d,
\end{align*}
and the integrand of the outer integral is an odd function.
Hence, the matrix $A_r \coloneqq \int_{|x|\leq r}xx^T \nu(\dx)$ is a diagonal matrix.
We compute the trace of $A_r$ as
\begin{align*}
\sum_{k=1}^{d} \int_{|x|\leq r} x_k^2 \,\nu(\dx)
&= C \int_{|x|\leq r}|x|^{2-\alpha-d} \dx
= C \int_0^r\int_{sS^{d-1}} s^{2-\alpha-d} \textnormal{d}\theta \textnormal{d}s\\
&= C \int_{0}^r s^{2-\alpha-d} \, s^{d-1} \omega_d \textnormal{d}s
 = C\omega_d r^{2 - \alpha}/(2-\alpha),
\end{align*}
where $\omega_d$ denotes the $(d-1)$-dimensional volume of the surface $S^{d-1}$.
Again by symmetry, it follows that $A_r = \frac{C\omega_d}{d(2-\alpha)} r^{2-\alpha} I_d$ with the identity matrix $I_d \in \bR^{d \times d}$.
Therefore, $G^+(r)= \frac{C\omega_d}{d(2-\alpha)} r^{2-\alpha}$
which implies that $\lim_{r \to 0} r^{-2}|\log r|^{-1} G^+(r) = \infty$,
showing that $\mu$ satisfies~\eqref{eq-kallenberg2}.\\
(b) Now let $\mu$ be a non-trivial rotation invariant strictly $\alpha$-stable distribution on $\bR^d$ as in (a), and let $\sigma$ be a probability distribution on $\bR^d$. Since $\mu$ has a $C^\infty$ density with all derivatives tending to $0$ as $|x|\to \infty$, the same is true for the convolution $\mu' := \mu \ast \sigma$. When $\sigma$ is additionally quasi-infinitely divisible and concentrated in $\bZ^d$, then this can be also seen from Theorem \ref{abscont}. To see this, observe that $\sigma$ has finite quasi-L\'evy measure $\nu_\sigma$ concentrated in $\bZ^d$ by Theorem \ref{gitter}. It follows that $\mu'$ has quasi-L\'evy measure $\nu_\sigma(\di x) + C |x|^{-(d+\alpha)} \, \di x$. Hence the quantities $G^{\pm}(r)$ for $\mu$ and $\mu'$ coincide when $r<1$ (with $G^-(r)$ being zero when $r<1$). It follows that also $\mu'$ satisfies the assumptions of Theorem \ref{abscont}, so that $\mu'$ has a $C^\infty$-density with derivatives tending to 0 as $|x|\to \infty$. This is of course a constructed example, but it shows that there are cases of quasi-infinitely divisible distributions that are not infinitely divisible for which the assumptions of Theorem~\ref{abscont} are applicable.
\end{example}

\begin{remark} \label{continuity}
{\rm
While the problem of a complete description of absolute continuity in terms of the L\'evy measure remains  challenging for infinitely divisible distributions, the corresponding question for continuity is completely solved: It is well-known that an infinitely divisible distribution $\mu$ on $\bR^d$ with characteristic triplet $(A, \nu, \gamma)$ is continuous if and only if $A \neq 0$ or $\nu(\bR) = \infty$, see \cite[Theorem 27.4]{sato}. The same characterisation fails however when considering quasi-infinitely divisible distributions. Berger \cite[Ex. 4.7]{berger} showed that the distribution $\mu = 0.001 \delta_0 + 0.999 N(1,1)$, where $N(1,1)$ is the one-dimensional normal distribution with mean and variance 1, is quasi-infinitely divisible with infinite quasi-L\'evy measure. Observe that $\mu$ is not continuous. Using Proposition \ref{prop-indep} it is then also easy to construct non-continuous multivariate quasi-infinitely divisible distributions with infinite quasi-L\'evy measure.}
\end{remark}

\section{Topological properties of the class of infinitely divisible distributions} \label{S5}
\setcounter{equation}{0}

Let $\QID(\bR^d)$ denote the set of all quasi-infinitely divisible distributions on $\bR^d$ and $\cP(\bR^d)$ the set of all distributions on $\bR^d$.
Equipped with the Prokhorov-metric $\pi$, $\cP(\bR^d)$ gets a metric space and the convergence in this space corresponds to the weak convergence of distributions.
In this section we will always identify $\cP(\bR^d)$ with this metric space.
The aim of this section is to state some topological properties of $\QID(\bR^d)$ that were already given by \cite{berger} and  \cite{lindner} in one dimension.
We start with some results that can be shown similarly in any dimension $d \in \bN$. The following was shown in \cite[Prop. 5.1]{berger} and \cite[Sect.~4]{lindner} in dimension 1.

\begin{theorem}
Let $d\in \bN$. The set $\QID(\bR^d)$ is neither open nor closed in $\cP(\bR^d)$.
Moreover, the set $\cP(\bR^d) \setminus \QID(\bR^d)$ is dense in $\cP(\bR^d)$.
\end{theorem}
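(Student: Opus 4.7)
The plan is to prove the three assertions (density of the complement, non-openness, non-closedness) in sequence, with the first two essentially coming together.

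First, I would establish that $\cP(\bR^d) \setminus \QID(\bR^d)$ is dense in $\cP(\bR^d)$ by an approximation argument based on a single structural fact: the characteristic function of any quasi-infinitely divisible distribution is zero-free (immediate from Definition~\ref{def-qid} together with the zero-freeness of characteristic functions of infinitely divisible distributions). Concretely, given an arbitrary $\mu \in \cP(\bR^d)$, I would set $\rho_n := \tfrac{1}{2}(\delta_{-e_1/n} + \delta_{e_1/n})$, where $e_1$ is the first standard basis vector, and take $\mu_n := \mu \ast \rho_n$. Then $\widehat{\mu_n}(z) = \widehat{\mu}(z) \cos(z_1/n)$ vanishes at $z = (n\pi/2)\, e_1$ for every $n$, so $\mu_n$ is not zero-free and hence $\mu_n \notin \QID(\bR^d)$. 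Since $\rho_n \to \delta_0$ weakly and convolution is continuous with respect to weak convergence, $\mu_n \to \mu$ weakly, yielding the desired density.

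The non-openness of $\QID(\bR^d)$ is then a formal consequence: $\QID(\bR^d)$ is non-empty (it contains for instance $\delta_0$), but since its complement is dense it cannot contain any open neighborhood around any of its elements, so it has empty interior and in particular is not open.

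For the non-closedness, I would reduce to dimension one. By \cite[Theorem~4.1]{lindner}, $\QID(\bR)$ is dense in $\cP(\bR)$, and $\cP(\bR) \setminus \QID(\bR)$ is non-empty (for example, the symmetric Bernoulli distribution $\tfrac{1}{2}(\delta_0 + \delta_1)$ has characteristic function vanishing at $\pi$, hence is not quasi-infinitely divisible). I would therefore pick a sequence $(\mu_n)_{n\in\bN} \subset \QID(\bR)$ converging weakly to some $\mu \in \cP(\bR) \setminus \QID(\bR)$, and then lift it to $\bR^d$ by setting $\tilde{\mu}_n := \mu_n \otimes \delta_0^{\otimes(d-1)}$ and $\tilde{\mu} := \mu \otimes \delta_0^{\otimes(d-1)}$. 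By Proposition~\ref{prop-indep}, $\tilde{\mu}_n \in \QID(\bR^d)$ while $\tilde{\mu} \notin \QID(\bR^d)$, and $\tilde{\mu}_n \to \tilde{\mu}$ weakly, which shows that $\QID(\bR^d)$ is not closed.

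There is no substantial obstacle in this plan: once one notices that zero-freeness of $\widehat{\mu}$ is a necessary condition for quasi-infinite divisibility, the perturbation $\rho_n$ produces a forced zero independently of $\widehat{\mu}$, and Proposition~\ref{prop-indep} transfers the one-dimensional non-closedness of $\QID(\bR)$ (itself the only nontrivial ingredient, supplied by \cite{lindner}) to arbitrary dimension.
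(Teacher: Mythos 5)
Your proposal is correct. The density and non-openness parts coincide in substance with the paper's argument: the paper convolves $\mu$ with $\sigma(n\,\di x)$ for an arbitrary $\sigma$ whose characteristic function has zeros, and your $\rho_n = \tfrac12(\delta_{-e_1/n}+\delta_{e_1/n})$ is exactly this construction with the concrete choice $\sigma = \tfrac12(\delta_{-e_1}+\delta_{e_1})$; the deduction of non-openness from density of the complement is the same formal step in both. Where you genuinely diverge is the non-closedness. The paper stays in $\bR^d$ and exhibits the explicit sequence $\frac{n+1}{2n}\delta_0 + \frac{n-1}{2n}\delta_{e_1}$, which is quasi-infinitely divisible by Cuppens' theorem (Theorem~\ref{cuppens}, an atom of mass $>1/2$) and converges weakly to $\tfrac12\delta_0+\tfrac12\delta_{e_1}$, whose characteristic function vanishes. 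You instead reduce to $d=1$, invoke the density of $\QID(\bR)$ in $\cP(\bR)$ from \cite[Thm.~4.1]{lindner} to manufacture a convergent sequence with a non-quasi-infinitely divisible limit, and lift it to $\bR^d$ via Proposition~\ref{prop-indep}. This is valid, but it leans on a much heavier input than necessary: the one-dimensional density theorem is a deep result, whereas the paper's explicit sequence needs only Cuppens' elementary criterion. (Your own one-dimensional witness $\tfrac12(\delta_0+\delta_1)$ is in fact the limit of the paper's sequence, so you could have bypassed the density theorem entirely by taking $\mu_n = \frac{n+1}{2n}\delta_0+\frac{n-1}{2n}\delta_1$ before lifting.) The paper's route is more self-contained; yours has the mild advantage of making explicit that non-closedness in any dimension is inherited from dimension one through the product construction.
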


\begin{proof}
To see that $\cP(\bR^d) \setminus \QID(\bR^d)$ is dense in $\cP(\bR^d)$, let $\mu$ be an arbitrary distribution on $\bR^d$ and let $\sigma$ be a distribution on $\bR^d$ such that its characteristic function $\widehat{\sigma}$ has zeros.
For $n \in \bN$ define the distribution $\mu_n$ by
$\mu_n(\dx) = \mu(\dx) \ast \sigma(n \dx)$.
The characteristic function of $\mu_n$ is given by $\widehat{\mu_n}(z) = \widehat{\mu}(z) \widehat{\sigma}(z/n)$ for $z \in \bR^d$ and has zeros, hence $\mu_n$ cannot be quasi-infinitely divisible.
Furthermore, $\widehat{\mu_n}(z) \to \widehat{\mu}(z)$ as $n \to \infty$ for every $z \in \bR^d$, implying that $\mu_n \xrightarrow{w} \mu$ as $n \to \infty$.
Hence,  $\cP(\bR^d) \setminus \QID(\bR^d)$ is dense in $\cP(\bR^d)$, so  $\cP(\bR^d) \setminus \QID(\bR^d)$ cannot be closed, therefore $\QID(\bR^d)$ cannot be an open set.
In order to show that $\QID(\bR^d)$ is not closed, first observe that for $n \in \bN$ the distribution $ \frac{n+1}{2n}\delta_0 + \frac{n-1}{2n} \delta_{e_1}$ is quasi-infinitely divisible due to Theorem~\ref{cuppens}, where $e_1 = (1,0,\ldots,0)^T \in \bR^d$ is the first unit vector.
We have
$$ \frac{n+1}{2n} \delta_0 + \frac{n-1}{2n}\delta_{e_1} \xrightarrow{w} \frac{1}{2} \delta_0 + \frac{1}{2} \delta_{e_1}
\quad \textrm{as } n \to \infty$$
and $\frac{1}{2} \delta_0 + \frac{1}{2} \delta_{e_1}$ is not  quasi-infinitely divisible since its characteristic function has zeros.
Hence, the set $\QID(\bR^d)$ cannot be closed.
\end{proof}

In any topological space and hence in any metric space one can define the notions of connected and path-connected subsets. Observe that path-connectedness implies connectedness (see \cite[Thm. 3.29]{Armstrong}). The following result shows that $\QID(\bR^d)$ is path-connected (with respect to the Prokhorov metric) and hence connected, generalising \cite[Prop. 5.2]{berger} for $\QID(\bR)$ to arbitrary dimensions.

\begin{theorem}
Let $d\in \bN$. The set $\QID(\bR^d)$ is path-connected and hence connected in $\cP(\bR^d)$.
\end{theorem}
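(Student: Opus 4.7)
The plan is to establish path-connectedness by showing that every $\mu \in \QID(\bR^d)$ can be joined to the Dirac measure $\delta_0$ by a continuous path in $\QID(\bR^d)$; since $\delta_0$ is trivially infinitely divisible and hence quasi-infinitely divisible, path-connectedness then follows by concatenating such paths: for arbitrary $\mu,\nu \in \QID(\bR^d)$, glue the reverse of the path from $\nu$ to $\delta_0$ with the path from $\delta_0$ to $\mu$.

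To produce a path from $\delta_0$ to a given $\mu \in \QID(\bR^d)$, I would pick infinitely divisible distributions $\mu_1, \mu_2$ with $\mu \ast \mu_2 = \mu_1$ (guaranteed by Definition \ref{def-qid}), and denote by $(\eta_s^{(i)})_{s \geq 0}$ the convolution semigroup of infinitely divisible distributions satisfying $\widehat{\eta_s^{(i)}} = \widehat{\mu_i}^{\,s}$ (well-defined via the distinguished logarithm since $\widehat{\mu_i}$ is zero-free). Define
\begin{align*}
\varphi(t) = \begin{cases} \eta_{2t}^{(1)}, & t \in [0,1/2], \\ \mu \ast \eta_{2-2t}^{(2)}, & t \in [1/2,1]. \end{cases}
\end{align*}
At $t = 1/2$ both branches equal $\mu \ast \mu_2 = \mu_1$, so $\varphi$ is well-defined with $\varphi(0) = \delta_0$ and $\varphi(1) = \mu$.

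It remains to verify that $\varphi$ takes values in $\QID(\bR^d)$ and is continuous. On $[0,1/2]$, $\varphi(t) = \eta_{2t}^{(1)}$ is infinitely divisible, hence quasi-infinitely divisible; on $[1/2,1]$, $\varphi(t)$ is the convolution of the quasi-infinitely divisible $\mu$ with the infinitely divisible $\eta_{2-2t}^{(2)}$, hence quasi-infinitely divisible by Remark \ref{convolution}. Continuity is a routine application of L\'evy's continuity theorem: for each $z \in \bR^d$ the map $s \mapsto \widehat{\mu_i}(z)^s$ is continuous in $s$, so $s \mapsto \eta_s^{(i)}$ is weakly continuous, and convolution with the fixed distribution $\mu$ preserves weak continuity; both pieces of $\varphi$ are thus weakly continuous and agree at $t = 1/2$.

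The step that requires the most care, and the reason I avoid the naive construction $\widehat{\varphi(t)} = \exp(t \Psi_\mu)$, is that fractional convolution powers of a quasi-infinitely divisible distribution need not exist: although $\exp(t \Psi_\mu) = \widehat{\eta_t^{(1)}}/\widehat{\eta_t^{(2)}}$ is a ratio of bona fide infinitely divisible characteristic functions, it may fail to be positive definite for $t \in (0,1)$. The two-piece construction above circumvents this obstacle entirely, since only fractional convolution powers of the \emph{infinitely divisible} distributions $\mu_1$ and $\mu_2$ are ever invoked.
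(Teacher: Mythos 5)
Your proof is correct, but it takes a genuinely different route from the paper's. The paper joins two arbitrary $\mu_0,\mu_1\in\QID(\bR^d)$ directly by the scaling interpolation $\mu_t = \law((1-t)X_0+tX_1)$ with $X_0,X_1$ independent, i.e. $\widehat{\mu_t}(z)=\widehat{\mu_0}((1-t)z)\,\widehat{\mu_1}(tz)$; each $\mu_t$ is quasi-infinitely divisible because affine images (Lemma \ref{projection}) and convolutions (Remark \ref{convolution}) of quasi-infinitely divisible laws are again quasi-infinitely divisible, and continuity is read off from the characteristic functions exactly as in your argument. You instead connect everything to the basepoint $\delta_0$ by exploiting the factorisation $\mu\ast\mu_2=\mu_1$ and running along the convolution semigroups $(\eta^{(i)}_s)_{s\ge 0}$ of the two infinitely divisible factors: up the semigroup of $\mu_1$ from $\delta_0$ to $\mu_1$, then down $\mu\ast\eta^{(2)}_{s}$ from $\mu_1$ back to $\mu$. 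All the steps check out: the branches agree at $t=1/2$, every value on the path is quasi-infinitely divisible (infinitely divisible laws trivially so, and convolution of a quasi-infinitely divisible law with an infinitely divisible one by Remark \ref{convolution}), and weak continuity follows from pointwise continuity of $s\mapsto\exp(s\Psi_{\mu_i}(z))$ via L\'evy's continuity theorem. The paper's construction is shorter and more symmetric (one formula, no case split, no appeal to the factorisation), while yours makes explicit use of the defining property $\mu\ast\mu_2=\mu_1$ and correctly flags the genuine obstruction that the naive path $\exp(t\Psi_\mu)$ would require fractional convolution roots of $\mu$, which need not exist for quasi-infinitely divisible $\mu$ (indeed, by Remark \ref{rem-triplet}(b), if they all existed $\mu$ would be infinitely divisible). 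Both arguments are complete proofs of the theorem.
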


\begin{proof}
Suppose that $\mu_0$ and $\mu_1$ are quasi-infinitely divisible distributions on $\bR^d$.
For $t \in (0,1)$ the distributions $\sigma_t^0$ and $\sigma_t^1$ defined by $\sigma_t^0(\dx) \coloneqq \mu_0(1/(1-t) \dx)$ and $\sigma_t^1(\dx) \coloneqq \mu_1(1/t \dx)$ are also quasi-infinitely divisible by Lemma \ref{projection}.
Therefore, also the distribution $\mu_t$ defined by
$$\mu_t(\dx) \coloneqq \mu_0\left( \frac{1}{1-t} \di x \right) \ast \mu_1\left( \frac{1}{t} \dx \right)$$
is quasi-infinitely divisible for every $t \in (0,1)$.
Note that $\widehat{\mu_t}(z) = \widehat{\mu_0}((1-t)z)\widehat{\mu_1}(tz)$ for $z \in \bR^d$.
The mapping
$$p: [0,1] \to \cP(\bR^d), \quad t \mapsto \mu_t$$ is continuous, because $\widehat{\mu_s}(z) \to \widehat{\mu_t}(z)$ as $s \to t$ for all $z \in \bR^d$ and hence $\mu_s \xrightarrow{w} \mu_t$ as $s \to t$.
Since $p(0) = \mu_0$ and $p(1) = \mu_1$, it follows that $\QID(\bR^d)$ is path-connected and hence connected.
\end{proof}

It is not surprising that $\cP(\bR^d) \setminus \QID (\bR^d)$ is dense in $\cP(\bR^d)$. Much more surprising is the fact that also $\QID(\bR)$ is dense in dimension 1. This was proved in \cite[Thm. 4.1]{lindner}. We state the precise result and also give a sketch of the proof along the lines of \cite{lindner}, in order to discuss afterwards where the obstacles arise when trying to generalise the result to higher dimensions.

\begin{theorem} \label{thm-dense}
The set of quasi-infinitely divisible distributions on $\bR$ with Gaussian variance zero and finite quasi-L\'evy measure is dense in $\cP(\bR)$. 
\end{theorem}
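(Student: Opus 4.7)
The plan is to approximate an arbitrary $\mu\in\cP(\bR)$ weakly by finitely supported lattice distributions whose characteristic functions are zero-free; by Corollary~\ref{c-lattice} any such distribution is in $\QID(\bR)$ with finite quasi-L\'evy measure and zero Gaussian variance, so constructing such an approximation completes the proof.

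I would first discretise $\mu$: with $h_n\downarrow 0$ and $K_n\in\bN$ satisfying $K_nh_n\to\infty$, set
\[
\mu_n \;:=\; \sum_{|k|\leq K_n} \mu\bigl([kh_n,(k+1)h_n)\bigr)\,\delta_{kh_n}\; +\; \mu\bigl(\bR\setminus[-K_nh_n,(K_n+1)h_n)\bigr)\,\delta_0,
\]
which is finitely supported on $h_n\bZ$; pointwise convergence of the cumulative distribution functions at continuity points of $F_\mu$ yields $\mu_n\stackrel{w}{\to}\mu$. In the second, more delicate step I would perturb $\mu_n$ inside the simplex $\Delta_n$ of probability measures supported on the finite set $\{-K_nh_n,\ldots,K_nh_n\}$ to produce $\tilde{\mu}_n\in\Delta_n$ with zero-free characteristic function and $\|\tilde{\mu}_n-\mu_n\|_{\mathrm{TV}}<1/n$.

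The existence of such a perturbation rests on the claim that the subset $G_n\subset\Delta_n$ of probability measures with zero-free characteristic function is dense in $\Delta_n$. I would establish this as follows: writing $\widehat{\mu_n}(z)=P(e^{ih_nz})$ with $P$ a Laurent polynomial having non-negative coefficients summing to one, $G_n$ corresponds to those $P$ with no zero on the unit circle; this subset is open by continuity of polynomial roots on the compact unit circle, nonempty (any Dirac at a lattice point corresponds to a monomial, which has no zero on the unit circle), and the \lq\lq bad\rq\rq{} set is the projection to $\Delta_n$ of the real-analytic variety
\[
\bigl\{((p_k),\theta)\in\Delta_n\times[0,2\pi): \textstyle\sum_k p_k\cos(k\theta)=\sum_k p_k\sin(k\theta)=0\bigr\},
\]
which has codimension at least two in $\Delta_n\times[0,2\pi)$ and hence projects to a closed semi-analytic subset of $\Delta_n$ of Hausdorff dimension strictly smaller than $\dim\Delta_n$, in particular with empty interior.

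Having chosen $\tilde{\mu}_n\in G_n$ with $\|\tilde{\mu}_n-\mu_n\|_{\mathrm{TV}}<1/n$, Corollary~\ref{c-lattice} yields $\tilde{\mu}_n\in\QID(\bR)$ with Gaussian variance zero and finite quasi-L\'evy measure, while $\|\tilde{\mu}_n-\mu_n\|_{\mathrm{TV}}\to 0$ combined with $\mu_n\stackrel{w}{\to}\mu$ gives $\tilde{\mu}_n\stackrel{w}{\to}\mu$. The principal obstacle is the density of $G_n$ in $\Delta_n$: one cannot simply convolve $\mu_n$ with a zero-free lattice factor, because convolution multiplies characteristic functions and thus preserves their zeros, so the perturbation must be carried out inside the simplex and the nowhere-denseness of the bad set justified by the above genericity argument.
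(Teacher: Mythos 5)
Your overall strategy (discretise to a finite lattice distribution, then perturb to make the characteristic function zero-free, then invoke Theorem~\ref{gitter}/Corollary~\ref{c-lattice}) is the same as the paper's, but the perturbation step is genuinely different. The paper writes $\widehat{\sigma}(z)=f(\re^{\ri z})$ for a polynomial $f$ with positive coefficients, factorises $f(\omega)=p_m\prod_k(\omega-\xi_k)$, and shifts every root by a common small real $h$; conjugate pairs of roots remain conjugate, so the coefficients stay real, they converge to the original positive $p_k$, and for small $h$ no root lies on the unit circle. This is completely explicit and elementary. You instead argue that the ``bad'' set $B_n=\Delta_n\setminus G_n$ is the projection of a real-analytic variety of codimension two and hence nowhere dense. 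Your route has the conceptual advantage of making visible exactly why the result is one-dimensional: two real equations in one angular variable $\theta$ give a projected bad set of dimension $\dim\Delta_n-1$, whereas for $d\geq 2$ the same count gives dimension $\dim\Delta_n+d-2\geq\dim\Delta_n$, consistent with the failure of density of zero-free functions on $[0,2\pi]^d$ for $d\geq 2$ noted after Open Question~\ref{thm-dense}. The paper's route has the advantage of needing no dimension theory for semi-analytic sets.

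The one step you must not leave as an assertion is precisely the codimension claim, since it carries the whole weight of the argument, and as stated it is not quite accurate: the two defining equations are \emph{not} independent for every $\theta$. At $\theta=\pi$ one has $\sin(k\pi)=0$ for all $k$, so only the single condition $\sum_k(-1)^kp_k=0$ survives and the fibre over $\theta=\pi$ has codimension one in $\Delta_n$ (it is nonempty, e.g.\ $p_0=p_1=\tfrac12$); a blanket ``codimension two'' is therefore false for the variety as a whole unless you stratify. What saves the argument is that (i) for each fixed $\theta$ the fibre is the intersection of $\Delta_n$ with a linear subspace, and since $k=0$ lies in your index set the real-part condition $\sum_kp_k\cos(k\theta)=0$ is never vacuous on $\{\sum_kp_k=1\}$, so every fibre has dimension at most $\dim\Delta_n-1$; and (ii) the functionals $p\mapsto\sum_kp_k\cos(k\theta)$ and $p\mapsto\sum_kp_k\sin(k\theta)$ are linearly independent modulo $\sum_kp_k=1$ for all $\theta\in(0,2\pi)\setminus\{\pi\}$ (a two-line computation using the indices $k=0,\pm1$), so those fibres have dimension at most $\dim\Delta_n-2$. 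Combining, the variety has dimension at most $\dim\Delta_n-1$, and since it is semi-analytic its (Lipschitz) projection $B_n$ also has dimension at most $\dim\Delta_n-1$, hence empty interior; note that a fibrewise dimension count of this kind is valid for semi-analytic sets but not for arbitrary sets, so the analyticity is doing real work here and should be cited. With (i), (ii) and this last point supplied, your proof is complete; the remaining steps (TV-perturbation inside the simplex, $\tilde\mu_n\stackrel{w}{\to}\mu$, and the application of Corollary~\ref{c-lattice}) are all correct.
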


\emph{Sketch of proof.}
Denote by $\mathcal{Q}(\bR)$ the set of all probability measures  on $\bR$ whose support is a finite set contained in a lattice of the form $n^{-1} \bZ$ for some $n\in \bN$, and by $\QID^0(\bR)$ the set of all quasi-infinitely divisible distributions on $\bR$ with Gaussian variance 0 and finite quasi-L\'evy measure. It is easily seen that $\mathcal{Q}(\bR)$ is dense in $\cP(\bR)$. Hence, it suffices to show that $\mathcal{Q}(\bR) \cap \QID^0(\bR)$ is dense in $\mathcal{Q}(\bR)$. To show this, let $\mu$ be a  distribution in $\mathcal{Q}(\bR)$, say $\mu = \sum_{k=-m}^m p_k \delta_{k/n}$ for some $m,n\in \bN$ with $0\leq p_k \leq 1$ for all $k\in \{-m,\ldots, m\}$ and $\sum_{k=-m}^m p_k = 1$, and let $X$ be a random variable with distribution $\mu$. From Lemma~\ref{projection} it is clear that $\mu=\law(X)$ is in the closure of $\mathcal{Q}(\bR) \cap \QID^0(\bR)$ if and only if $\law(n X+m)$ is in the closure of $\mathcal{Q}(\bR) \cap \QID^0(\bR)$. Hence it is sufficient to consider distributions $\mu$ whose support is a finite set contained in $\bN_0$. If the support of $\mu$ is contained in $\{0,\ldots, m\}$ for some $m\in \bN_0$, then it is easily seen that $\mu$ can be approximated arbitrarily well with distributions having support exactly $\{0,\ldots, m\}$, i.e. distributions $\sigma$ of the form $\sigma= \sum_{k=0}^m p_k \delta_k$ with $p_0,\ldots, p_m>0$ (strictly positive). So it is sufficient to show that any such distribution $\sigma$ can be approximated arbitrarily well by distributions in $\mathcal{Q}(\bR) \cap \QID^0(\bR)$.
If the characteristic function $\widehat{\sigma}$ of $\sigma$ has no zeroes, then we are done by Theorem \ref{gitter}.
So suppose that $\bR \ni z \mapsto \widehat{\sigma}(z) = \sum_{k=0}^m p_k \re^{\ri z k}$  has zeroes, which corresponds to zeroes of the polynomial $f$ given by $f(\omega) = \sum_{k=0}^{m} p_k\omega^k$ for $\omega \in \bC$ on the unit circle.
Factorising $f$, we can write $f(\omega) = p_m \prod_{k=1}^{m}(\omega-\xi_k)$ for $\omega \in \bC$ with $\xi_k \in \bC$ for $k \in \{1,\ldots,m\}$.
For $h > 0$ let
\begin{align} \label{eq-modified}
	f_h(\omega)
	\coloneqq p_{m}\prod_{k=1}^{m}(\omega-\xi_k-h)
	\quad \textrm{for all } \omega \in \bC.
\end{align}
If $h$ is chosen small enough, then $f_h$ has no zeros on the unit circle.
The polynomial $f$ has real coefficients, so the non-real roots of $f$ appear in pairs of complex conjugates.
By construction, the same is true for $f_h$, so there exist $\alpha_{h,k} \in \bR$ for $k \in \{0,\ldots,m\}$ such that $f_h(\omega) = \sum_{k=0}^{m}\alpha_{h,k}\omega^k$ for all $\omega \in \bC$.
Moreover, $\alpha_{h,k} \to p_k>0$ as $h \to 0$, so  we can assume that $h$ is small enough such that $\alpha_{h,k} > 0$ for all $k \in \{0,\ldots,m\}$.
Let $\sigma_h \coloneqq \lambda^{-1} \sum_{k=1}^{m} \alpha_{h,k} \delta_k$ with $\lambda \coloneqq \sum_{k=0}^{m}\alpha_{h,k}$. Then for small enough $h>0$, $\sigma_h$ is a probability distribution having support $\{0,\ldots, m\}$ and the characteristic function of $\sigma_h$ has no zeroes. By Theorem \ref{gitter},  $\sigma_h$ is quasi-infinitely divisible with finite quasi-L\'evy measure and Gaussian variance $0$, and $\sigma_h \stackrel{w}{\to} \sigma$ as $h\downarrow 0$.
$\Box$

It is very tempting now to assume that Theorem \ref{thm-dense} also holds for $\bR^d$-valued distributions with general $d\in \bN$. Again it is easily seen that it would suffice to show that any distribution $\sigma = \sum_{k_1,\ldots, k_d=0}^m p_{(k_1,\ldots,k_d)} \delta_{(k_1,\ldots, k_d)}$ with $p_{k_1,\ldots, k_d}>0$ can be approximated arbitrarily well by quasi-infinitely distributions in $\bZ^d$. Since a distribution in $\bZ^d$ is quasi-infinitely divisible if and only if its characteristic function has no zeroes, this might appear to be an easy task at first glance. However, it is not clear how to do a modification as in Equation~\eqref{eq-modified}, the problem being that polynomials in more than one variable do not factorise and also that there may be infinitely many zeroes of such polynomials. We have tried some time to pursue such a path, but have not succeeded. Having failed in proving Theorem~\ref{thm-dense} for dimensions $d\geq 2$, it is natural to wonder whether such a generalisation may be true at all. Let us pose this as an open question:

\begin{question}
Is $\QID(\bR^d)$ dense in $\cP(\bR^d)$ for any dimension $d\in \bN$, or is it dense only for $d=1$, or is it dense for certain dimensions and not for others?
\end{question}

Passeggeri \cite[Conjecture 4.1]{Passeggeri2020b} conjectures that $\QID(\bR^d)$ is dense in $\cP(\bR^d)$ for all dimensions $d\in \bN$. It is possible that this is true, but we are more inclined to believe that $\QID(\bR^d)$ will be dense in $\cP(\bR^d)$ if and only if $d=1$. An indication that this might be the case is the fact that the set of all zero-free complex valued continuous functions on $[0,1]^d$ is dense in the set of all continuous complex valued functions on $[0,1]^d$ with respect to uniform convergence if and only if $d=1$, see Pears \cite[Prop. 3.3.2]{Pears}. Knowing this, we wonder if even the set of all probability distributions on $\bR^d$ having zero-free characteristic function is dense in $\cP(\bR^d)$ if and only if $d=1$, but again we do not know the answer. We intend to invest this question more deeply in the future.

\section{Conditions for weak convergence} \label{S6}
\setcounter{equation}{0}

Weak convergence of infinitely divisible distributions can be characterised in terms of the characteristic triplet, see Sato \cite[Thms. 8.7, 56.1]{sato}.
This characterisation cannot be fully generalised to quasi-infinitely divisible distributions, since the class of quasi-infinitely divisible distributions is not closed with respect to weak convergence. In \cite[Thm. 4.3]{lindner}, some sufficient conditions for weak convergence of one-dimensional quasi-infinitely divisible distributions in terms of the characteristic pair where given; the \emph{characteristic pair} of a one-dimensional quasi-infinitely divisible distribution with characteristic triplet $(A,\nu,\gamma)_c$ with respect to a representation function $c:\bR \to \bR$ is given by $(\zeta,\gamma)_c$, where $\zeta$ is a finite signed measure on $\bR$ given by $\zeta(\di x) = A \, \delta_0(\di x) + (1\wedge x^2) \, \, \nu(\di x)$. It is not so easy to generalise the characteristic pair to the multivariate setting and hence we will rather work with another characterisation of weak convergence, in line with the conditions given in \cite[Thm. 56.1]{sato} for weak convergence of infinitely divisible distributions.

Denote by $C_{\#}$ the set of all bounded, continuous functions $f: \bR^d \to \bR$ vanishing on a neighborhood of $0$. Then the following provides a sufficient condition for weak convergence of quasi-infinitely divisible distributions in terms of the characteristic triplets.

\begin{theorem}
Let $c: \bR^d \to \bR^d$ be a continuous representation function.
Let $(\mu_n)_{n \in \bN}$ be a sequence of quasi-infinitely divisible distributions on $\bR^d$ such that $\mu_n$ has characteristic triplet $(A_n, \nu_n, \gamma_n)_c$ for every $n \in \bN$ and let $\mu$ be a quasi-infinitely divisible distribution on $\bR^d$ with characteristic triplet $(A, \nu, \gamma)_c$.
Suppose that the following conditions are satisfied.
\begin{enumerate}
\item For all $f \in C_\#$ it holds
\begin{align*}
\lim_{n \to \infty} \int_{\bR^d} f(x) \nu_n^\pm(\dx)
= \int_{\bR^d} f(x) \nu^\pm (\dx).
\end{align*}
\item If $A_{n, \varepsilon}$ is defined by
\begin{align*}
\langle z, A_{n, \varepsilon} z \rangle
= \langle z, Az \rangle
	+ \int_{|x| \leq \varepsilon} \langle z, x \rangle^2 \nu_n^+(\dx)
\end{align*}
for all $n \in \bN$ and $\varepsilon > 0$, then
\begin{align*}
\lim_{\varepsilon \downarrow 0} \limsup_{n \to \infty} | \langle z, A_{n, \varepsilon} z \rangle - \langle z, Az \rangle| = 0
\quad \textrm{for all } z \in \bR^d.
\end{align*}
\item It holds
\begin{align*}
\lim_{\varepsilon \downarrow 0} \limsup_{n \to \infty} \int_{|x| \leq \varepsilon} |x|^2 \nu_n^-(\dx) = 0.
\end{align*}
\item $\gamma_n \to \gamma$ as $n \to \infty$.
\end{enumerate}
Then $\mu_n \xrightarrow{w} \mu$ as $n \to \infty$.
\end{theorem}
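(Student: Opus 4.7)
The plan is to reduce this statement to the classical weak-convergence theorem for infinitely divisible distributions on $\bR^d$ (cf.\ Sato \cite[Thms.\ 8.7, 56.1]{sato}) by writing each $\mu_n$ as a quotient of two infinitely divisible distributions and treating the two factors separately. Since Lemma \ref{nnd} guarantees that every $A_n$ is non-negative definite, let $\tau_n$ be infinitely divisible with characteristic triplet $(A_n,\nu_n^+,\gamma_n)_c$ and $\rho_n$ infinitely divisible with $(0,\nu_n^-,0)_c$. The L\'evy--Khintchine type representation of $\mu_n$ immediately gives $\widehat{\mu_n}=\widehat{\tau_n}/\widehat{\rho_n}$, i.e.\ $\mu_n\ast\rho_n=\tau_n$. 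Defining $\tau$ and $\rho$ analogously from the triplet of $\mu$, one has $\widehat{\mu}=\widehat{\tau}/\widehat{\rho}$ for the same reason (the integrand $e^{\ii\langle z,x\rangle}-1-\ii\langle z,c(x)\rangle$ is integrable against $|\nu|$).

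Next I would verify the three standard hypotheses of Sato's convergence theorem for each pair $(\tau_n,\tau)$ and $(\rho_n,\rho)$. For $\tau_n\xrightarrow{w}\tau$: the $C_\#$ convergence of L\'evy measures is exactly the \lq\lq$+$\rq\rq\ half of assumption (i); the matrix condition is precisely assumption (ii), because the Gaussian covariance of $\tau_n$ together with its small-jump variance at level $\varepsilon$ equals $A_{n,\varepsilon}$ by construction; and the location convergence is assumption (iv). For $\rho_n\xrightarrow{w}\rho$: the measure condition is the \lq\lq$-$\rq\rq\ half of (i); the location condition is trivial; and the matrix condition reads $\lim_{\varepsilon\downarrow 0}\limsup_{n\to\infty}\int_{|x|\leq\varepsilon}\langle z,x\rangle^2\,\nu_n^-(\dx)=0$, which follows from assumption (iii) via $\langle z,x\rangle^2\leq |z|^2|x|^2$.

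Applying the cited theorem then yields $\tau_n\xrightarrow{w}\tau$ and $\rho_n\xrightarrow{w}\rho$, and in particular the pointwise convergence $\widehat{\tau_n}(z)\to\widehat{\tau}(z)$ and $\widehat{\rho_n}(z)\to\widehat{\rho}(z)$ for every $z\in\bR^d$. Since $\rho$ is infinitely divisible its characteristic function is zero-free, hence
\[
\widehat{\mu_n}(z)=\frac{\widehat{\tau_n}(z)}{\widehat{\rho_n}(z)}\longrightarrow \frac{\widehat{\tau}(z)}{\widehat{\rho}(z)}=\widehat{\mu}(z)\qquad\textrm{for every }z\in\bR^d,
\]
and L\'evy's continuity theorem delivers $\mu_n\xrightarrow{w}\mu$.

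The main obstacle I anticipate is conceptual rather than technical: one has to make sure that the hypotheses (i)--(iv), which are stated symmetrically in $\nu^+$ and $\nu^-$ and use the single matrix $A$, split cleanly into the asymmetric hypotheses of the classical theorem for each factor. The most delicate point is that assumption (ii) is formulated in terms of the Gaussian covariance matrix $A$ of $\mu$ itself rather than of $\tau$; this is, however, the correct quantity against which to compare, because $\tau$ has Gaussian covariance $A$ by definition and the \lq\lq$-$\rq\rq\ contribution sits entirely on the $\rho$-side, where it is absorbed by (iii). Once this bookkeeping is done the proof reduces to an application of known results.
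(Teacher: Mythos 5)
Your proof is correct and follows essentially the same route as the paper: decompose each $\mu_n$ into the infinitely divisible factors with triplets $(A_n,\nu_n^+,\gamma_n)_c$ and $(0,\nu_n^-,0)_c$, apply Sato's Theorem 56.1 to each sequence of factors, and conclude by taking quotients of characteristic functions. The only difference is that you spell out the verification of Sato's hypotheses (reading the definition of $A_{n,\varepsilon}$ with the Gaussian covariance $A_n$ of the $n$-th factor, as intended), which the paper leaves implicit.
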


\begin{proof}
Let $\mu^1$ and $\mu^2$ be infinitely divisible distributions with characteristic triplets $(A, \nu^+, \gamma)_c$ and $(0, \nu^-,0)_c$ and for $n \in \bN$ let $\mu_n^1$ and $\mu_n^2$ be infinitely divisible distributions with characteristic triplets $(A_n, \nu_n^+, \gamma_n)_c$ and $(0, \nu_n^-,0)_c$, respectively.
Then $\mu^2 \ast \mu = \mu^1$ and $\mu_n^2 \ast \mu_n = \mu_n^1$ for every $n \in \bN$.
Further, $\mu_n^1 \xrightarrow{w} \mu^1$ and $\mu_n^2 \xrightarrow{w} \mu^2$ by \cite[Thm. 56.1]{sato} and hence $\mu_n \xrightarrow{w} \mu$ as $n \to \infty$ which follows by considering characteristic functions.
\end{proof}

The next result presents a sufficient condition for a weak limit of quasi-infinitely divisible distributions to be again quasi-infinitely divisible:

\begin{theorem}
Let $c: \bR^d \to \bR^d$ be a continuous representation function and for $n \in \bN$ let $\mu_n$ be a quasi-infinitely divisible distribution on $\bR^d$ with characteristic triplet $(A_n, \nu_n, \gamma_n)_c$.
Suppose that the sequence $(\mu_n)_{n \in \bN}$ converges weakly to some distribution $\mu$ and that there exists a L\'evy measure $\sigma$ on $\bR^d$ such that
\begin{align*}
\lim_{n \to \infty}\int_{\bR^d} f(x) \nu_n^-(\dx)
= \int_{\bR^d} f(x) \sigma(\dx)
\end{align*}
for all $f \in C_{\#}$ and
\begin{align*}
\lim_{\varepsilon \downarrow 0} \limsup_{n \to \infty}
\int_{|x| \leq \varepsilon} |x|^2 \nu_n^-(\dx) = 0.
\end{align*}
Then $\mu$ is quasi-infinitely divisible.
If we denote its characteristic triplet by $(A, \nu, \gamma)_c$, then $\nu+\sigma$ is a L\'evy measure,
\begin{align*}
\lim_{n \to \infty}\int_{\bR^d} f(x) \nu_n^+(\dx)
= \int_{\bR^d} f(x) (\nu+\sigma)(\dx),
\end{align*}
 $\gamma_n \to \gamma$ as $n \to \infty$ and for $A_{n, \varepsilon} \in \bR^{d \times d}$ defined by
\begin{align*}
\langle z, A_{n, \varepsilon} z \rangle
= \langle z, Az \rangle
	+ \int_{|x| \leq \varepsilon} \langle z, x \rangle^2 \nu_n^+(\dx)
\end{align*}
for all $n \in \bN$ and $\varepsilon > 0$ we have
\begin{align*}
\lim_{\varepsilon \downarrow 0} \limsup_{n \to \infty} | \langle z, A_{n, \varepsilon} z \rangle - \langle z, Az \rangle| = 0
\quad \textrm{for all } z \in \bR^d.
\end{align*}
\end{theorem}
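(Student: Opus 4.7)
The strategy is to reduce the claim to the classical weak convergence theorem for infinitely divisible distributions (Sato \cite[Thm.~56.1]{sato}) by constructing an auxiliary sequence of infinitely divisible distributions that ``absorbs'' the negative part $\nu_n^-$ of the quasi-L\'evy measures.

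First, for each $n \in \bN$, let $\mu_n^2$ denote the infinitely divisible distribution with characteristic triplet $(0, \nu_n^-, 0)_c$. The two standing hypotheses on $\nu_n^-$, namely convergence to $\sigma$ on $C_\#$ and the small-jump condition, together with the trivial convergences of the Gaussian matrices ($0 \to 0$) and the location parameters ($0 \to 0$), verify the hypotheses of \cite[Thm.~56.1]{sato}. Hence $\mu_n^2 \xrightarrow{w} \rho$ as $n \to \infty$, where $\rho$ is the infinitely divisible distribution with characteristic triplet $(0, \sigma, 0)_c$. Set next $\mu_n^1 := \mu_n \ast \mu_n^2$, which by Remark~\ref{convolution} is infinitely divisible with triplet $(A_n, \nu_n^+, \gamma_n)_c$. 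Since weak convergence is preserved under convolution (via pointwise convergence of characteristic functions), $\mu_n^1 \xrightarrow{w} \mu \ast \rho =: \tau$, and since the class of infinitely divisible distributions is closed under weak convergence, $\tau$ is infinitely divisible; denote its characteristic triplet by $(A_\tau, \nu_\tau, \gamma_\tau)_c$.

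From $\tau = \mu \ast \rho$ we obtain $\widehat{\mu} = \widehat{\tau}/\widehat{\rho}$, which already shows that $\mu$ is quasi-infinitely divisible. To identify its triplet, note that since $\nu_\tau$ and $\sigma$ are genuine L\'evy measures, the restriction of $\nu_\tau - \sigma$ to $\cB_0^d$ is a quasi-L\'evy type measure in the sense of Definition~\ref{qlm}, and the pointwise identity $\log \widehat{\mu} = \Psi_\tau - \Psi_\rho$ of characteristic exponents rearranges into a L\'evy--Khintchine type representation \eqref{lkf} of $\widehat{\mu}$ with triplet $(A_\tau, \nu_\tau - \sigma, \gamma_\tau)_c$. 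By the uniqueness assertion of Theorem~\ref{lk}, this is the characteristic triplet of $\mu$. In particular $A = A_\tau$, $\gamma = \gamma_\tau$, and $\nu + \sigma = \nu_\tau$ is a L\'evy measure, giving the first two stated conclusions.

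The remaining three convergence statements then follow by applying \cite[Thm.~56.1]{sato} in the other direction to the weakly convergent sequence $\mu_n^1 \xrightarrow{w} \tau$ of infinitely divisible distributions with triplets $(A_n, \nu_n^+, \gamma_n)_c \to (A, \nu+\sigma, \gamma)_c$. The necessary conditions of that theorem translate directly into $\int_{\bR^d} f(x) \, \nu_n^+(\dx) \to \int_{\bR^d} f(x) \,(\nu+\sigma)(\dx)$ for every $f \in C_\#$, $\gamma_n \to \gamma$, and the claimed Gaussian convergence condition on $A_{n,\varepsilon}$. The bulk of the argument is routine; the one subtle point is the verification that the exponent of $\widehat{\mu} = \widehat{\tau}/\widehat{\rho}$ has the claimed quasi-L\'evy measure form, which rests on the standard estimate $|e^{\ii \langle z,x\rangle} - 1 - \ii \langle z, c(x)\rangle| \leq C_z(1 \wedge |x|^2)$ to ensure integrability of the integrand with respect to both $\nu_\tau$ and $\sigma$, so that Definition~\ref{qlm}(b) legitimises the expected splitting of the integral against $\nu_\tau - \sigma$ into the difference of the two integrals on $\cB_0^d$.
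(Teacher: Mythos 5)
Your proposal is correct and follows essentially the same route as the paper: introduce the auxiliary infinitely divisible laws $\mu_n^2$ with triplets $(0,\nu_n^-,0)_c$, apply Sato's Theorem 56.1 to get $\mu_n^2 \xrightarrow{w} \rho$, pass the convolution $\mu_n^1 = \mu_n \ast \mu_n^2$ to the limit $\mu\ast\rho$ (infinitely divisible by closedness of that class), read off the triplet of $\mu$ by subtracting $\sigma$ from the Lévy measure of the limit, and invoke Theorem 56.1 again for the remaining convergence statements. Your added remarks on the integrability needed to split the integral against $\nu_\tau - \sigma$ are a correct elaboration of a step the paper leaves implicit.
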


\begin{proof}
Let $\mu_n^1$ and $\mu_n^2$ be infinitely divisible distributions with characteristic triplets $(A_n, \nu_n^+, \gamma_n)_c$ and $(0, \nu_n^-,0)_c$, respectively, and  $\mu^2$ be infinitely divisible with characteristic triplet $(0, \sigma, 0)_c$.
Then \cite[Thm. 56.1]{sato} implies $\mu_n^2 \xrightarrow{w} \mu^2$, and hence also $\mu_n^1 = \mu_n^2 \ast \mu_n \xrightarrow{w} \mu^2 \ast \mu$ as $n\to\infty$.
Since $\mu_n^1$ is infinitely divisible for each $n\in \bN$, so is $\mu^2 \ast \mu$.
If we denote its characteristic triplet by $(A, \eta, \gamma)$, then $\mu$ is quasi infinitely divisible with characteristic triplet $(A, \nu, \gamma)_c$, where $\nu = \eta_{|\cB_0} - \sigma_{|\cB_0}$.
The other implications now follow from \cite[Thm. 56.1]{sato}.
\end{proof}

\section{Support properties} \label{S7}
\setcounter{equation}{0}

For infinitely divisible distributions on $\bR$, the boundedness of the support from below can be characterised in terms of the characteristic triplet.
More precisely, an infinitely divisible distribution $\mu$ with characteristic triplet $(a, \nu, \gamma)$ has support bounded from below if and only if $a=0$ and $\nu$ is supported in $[0, \infty)$, c.f. Sato \cite[Thm. 24.7]{sato}. If $\mu$ is only quasi-infinitely divisible (and not necessarily infinitely divisible), then it was shown in \cite[Prop. 5.1]{lindner} that the following two statements (i) and (ii) are equivalent:
\begin{enumerate}
\item[(i)] $\mu$ is bounded from below, $\supp (\nu^-) \subset [0,\infty)$  and $\int_{|x|\leq 1} |x| \, \nu^-(\di x) < \infty$.
\item[(ii)] $a=0$, $\supp (\nu^+) \subset [0,\infty)$ and   $\int_{|x|\leq 1} |x| \nu^+(\dx) < \infty$.
\end{enumerate}
Observe that for infinitely divisible distributions we have $\nu^- = 0$ and $\nu^+ = \nu$, so that the above result reduces to the known characterisation for infinitely divisible distributions. Also observe that the condition \lq\lq $\mu$ is bounded from below\rq\rq~can be rewritten as \lq\lq there is $b\in \bR$ such that $b + \supp (\mu) \subset [0,\infty)$\rq\rq.   Our goal now is to extend this result to higher dimensions, and we will be working immediately with cones rather than only $[0,\infty)^d$. Following \cite[Def. 4.8]{Rocha-Sato}, by a \emph{cone} in $\bR^d$ we mean a non-empty closed convex subset $K$ of $\bR^d$ which is not $\{0\}$, does not contain a straight line through the origin and is such that with $x\in K$ and $\lambda \geq 0$ also $\lambda x\in K$. Observe that this definition is more restrictive than the usual notion of cones in linear algebra, in the sense that we require additionally a cone to be closed, convex, non-trivial and one-sided (the latter being sometimes also called \lq\lq proper\rq\rq), but in probability this seems to be more standard and they are the only cones of interest to us. Obviously, $[0,\infty)^d$ is a cone, but there are many other examples.

Coming back to the question when a quasi-infinitely divisible distribution has support contained in a translate of a cone, let us first recall the corresponding results for infinitely divisible distributions; the equivalence of (i) and (iii) below can be found in Skorohod \cite[Thm. 21 in \S 3.3]{skorohod}  or Rocha-Arteaga and Sato \cite[Thm. 4.11]{Rocha-Sato}, while  the equivalence of (ii) and (iii) is stated in Equation (4.35) of \cite{Rocha-Sato}; since the latter equivalence is only given in a remark in \cite{Rocha-Sato} we provide a short sketch of the proof for this fact.

\begin{theorem} \label{thm-rocha}
Let $L=(L_t)_{t\geq 0}$ be a L\'evy process in $\bR^d$ with characteristic triplet $(A,\nu,\gamma)$ (i.e. $\law(L_1)$ has this characteristic triplet) and let $K\subset \bR^d$ be a cone. Then the following are equivalent:
\begin{enumerate}
\item[{\rm (i)}] $\supp (\law(L_t)) \subset K$ for every $t\geq 0$.
\item[{\rm (ii)}] $\supp (\law(L_t)) \subset K$ for some $t>0$.
\item[{\rm (iii)}] $A=0$, $\supp (\nu) \subset K$, $\int_{|x|\leq 1} |x|\, \nu(\di x) < \infty$ and $\gamma^0 \in K$, where $\gamma^0$ is the drift of $\law(L_1)$.
\end{enumerate}
\end{theorem}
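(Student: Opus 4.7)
My plan is to exploit the easy implication (i) $\Rightarrow$ (ii) together with the cited equivalence (i) $\Leftrightarrow$ (iii) of Skorohod and of Rocha-Arteaga and Sato. Those two together give (iii) $\Rightarrow$ (ii) at once, so the only nontrivial step is to prove (ii) $\Rightarrow$ (i); closing that implication will automatically yield (ii) $\Leftrightarrow$ (iii), which is what the remark in \cite{Rocha-Sato} asserts.

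The heart of (ii) $\Rightarrow$ (i) will be the elementary lemma that if $X_1,\ldots,X_n$ are i.i.d.\ $\bR^d$-valued random vectors with $X_1+\cdots+X_n\in K$ almost surely, then each $X_i\in K$ almost surely. To prove it, I would use that $\supp(\law(X_1+\cdots+X_n))$ is the closure of the $n$-fold sumset of $S := \supp(\law(X_1))$; since $K$ is closed, the assumption forces $nx \in K$ for every $x\in S$, and the scaling property $\lambda y \in K$ for $\lambda\geq 0$ and $y\in K$ then gives $x = (1/n)(nx) \in K$, i.e.\ $S\subset K$.

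Given this lemma, I would apply it to the i.i.d.\ decomposition $L_t = \sum_{k=1}^n (L_{kt/n} - L_{(k-1)t/n})$ coming from (ii) to infer $\supp(\law(L_{t/n}))\subset K$ for every $n\in \bN$. Because a convex cone is closed under addition, a $k$-fold convolution then yields $\supp(\law(L_q))\subset K$ for every $q$ in the countable dense subset $D := \{kt/n : k,n\in \bN_0\}$ of $[0,\infty)$. To extend to arbitrary $s\geq 0$, I would invoke the continuity in probability of $L$: for a sequence $s_j \to s$ with $s_j\in D$, the weak convergence $\law(L_{s_j})\to \law(L_s)$ together with the Portmanteau theorem and the openness of $K^c$ gives $\law(L_s)(K^c) \leq \liminf_j \law(L_{s_j})(K^c) = 0$, whence $\supp(\law(L_s))\subset K$ for every $s\geq 0$, which is (i).

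The hard part will be more conceptual than technical: it lies in recognising that each defining feature of a cone is used in exactly one place — closedness for the Portmanteau limit and for passing from sumset inclusions to pointwise inclusions, additive stability (convexity together with the scaling property) for combining i.i.d.\ copies of $L_{t/n}$ into $L_q$, and the scaling property once more to pass from $nx\in K$ back to $x\in K$. The conditions $K\neq\{0\}$ and that $K$ contains no whole line through the origin will play no role in this direction, although of course they enter the cited Skorohod / Rocha-Arteaga--Sato step that supplies (i) $\Leftrightarrow$ (iii).
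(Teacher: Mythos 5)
Your proposal is correct and follows essentially the same route as the paper: cite (i) $\Leftrightarrow$ (iii) from Skorohod / Rocha-Arteaga--Sato, note (i) $\Rightarrow$ (ii) is trivial, and prove (ii) $\Rightarrow$ (i) by splitting $L_t$ into i.i.d.\ increments, using the scaling property of the cone to pull the support of an increment back into $K$, recombining over a dense set of times via $K+K\subset K$, and passing to general $t$ by continuity. The only (cosmetic) differences are that the paper deduces $\supp(\law(L_{t/2}))\subset K$ by a contradiction argument with small open balls and positive probabilities rather than your cleaner sumset characterisation of $\supp(\mu_1\ast\mu_2)$, uses dyadic subdivision $t/2^n$ instead of $t/n$, and closes the limit step by right-continuity of paths rather than Portmanteau.
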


\emph{Sketch of proof of the equivalence of (i) and (ii).} That (i) implies (ii) is clear. For the converse, assume that $\supp (\law(L_t)) \subset K$ for some $t>0$. Since $\law(L_t) = (\law(L_{t/2}))^{\ast 2}$ we also have $\supp (\law(L_{t/2})) \subset K$; to see that, suppose there were $y\in  \supp (\law(L_{t/2})) \setminus K$. Then also $2y \notin K$ and by the closedness of $K$ there is an open ball $U$ containing $y$ with $U \cap K = \emptyset$, $(U+U)\cap K= \emptyset$ and $P(L_{t/2} \in U) > 0$. Then also $P(L_t \in U+U) > 0$, a contradiction. Hence we have $\supp (L_{t/2}) \subset K$ and iterating this argument we obtain $\supp (\law(L_{2^{-n} t})) \subset K$ for all $n\in \bN_0$. Since $K+K\subset K$ and $K$ is closed we conclude $\supp (\law (L_{q 2^{-n} t})) \subset K$ for all $q,n\in \bN_0$. Since $\{q 2^{-n} t: q,n\in \bN_0\}$ is dense in $[0,\infty)$ we get~(i) since $L$ has right-continuous sample paths. $\Box$

We also need the following easy lemma for infinitely divisible distributions with existing drift. It is well known, but since we were unable to find a ready reference, we include a short proof.

\begin{lemma} \label{lem-drift}
Let $\mu$ be an infinitely divisible distribution on $\bR^d$ with characteristic triplet $(A,\nu,\gamma)$ such that $\int_{|x|\leq 1} |x| \, \nu(\di x) < \infty$. Then the drift of $\mu$ is an element of $\supp(\mu)$.
\end{lemma}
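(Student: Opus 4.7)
The plan is to realise $\mu$ as the one-time marginal of a L\'evy process $(L_t)_{t\geq 0}$ and to exploit the L\'evy--It\^o decomposition under the assumed finite-variation condition $\int_{|x|\leq 1}|x|\,\nu(\di x) < \infty$, which lets us separate the deterministic drift from a random remainder that can be made arbitrarily small with positive probability.

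Let $(L_t)_{t\geq 0}$ be a L\'evy process with $\law(L_1) = \mu$. The integrability hypothesis allows us to uncompensate the small-jump integral in the L\'evy--It\^o decomposition, yielding
$$L_t = \gamma^0\, t + B_t + J_t \quad \text{almost surely,}$$
where $B$ is a Brownian motion with covariance matrix $A$ and $J_t := \sum_{s\leq t}\Delta L_s$ is an independent pure-jump L\'evy process of finite variation (the sum being absolutely convergent). Showing $\gamma^0 \in \supp(\mu)$ is equivalent to proving $P(|B_1 + J_1|<\varepsilon)>0$ for every $\varepsilon>0$. Fix such an $\varepsilon$ and, for $n\in\bN$, further split $J_t = J_t^{(n)} + R_t^{(n)}$, where $J^{(n)}$ collects those jumps of $L$ with $|\Delta L_s|\geq 1/n$ and $R^{(n)}$ the remaining ones; the three processes $B$, $J^{(n)}$, $R^{(n)}$ are mutually independent because they come from disjoint parts of the Poisson random measure of jumps together with the Gaussian component.

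Three observations drive the conclusion. First, $J^{(n)}$ is compound Poisson with finite intensity $\nu(\{|x|\geq 1/n\})$, so $P(J_1^{(n)}=0) = \exp(-\nu(\{|x|\geq 1/n\}))>0$. Second, by the compensation formula for Poisson random measures,
$$E|R_1^{(n)}| \;\leq\; \int_{|x|<1/n}|x|\,\nu(\di x) \;\longrightarrow\; 0 \quad \text{as } n\to\infty,$$
so $R_1^{(n)}\to 0$ in probability. Third, $P(|B_1|<\varepsilon/2)>0$ always (equal to $1$ when $A=0$, otherwise by positivity of the Gaussian density on $\bR^d$). Combining these via independence,
$$P(|B_1+J_1|<\varepsilon) \;\geq\; P(|B_1|<\varepsilon/2)\cdot P(J_1^{(n)}=0)\cdot P(|R_1^{(n)}|<\varepsilon/2),$$
which becomes strictly positive once $n$ is chosen large enough for the third factor to exceed, say, $1/2$. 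No step here is truly difficult; the only points requiring care are verifying the three-fold independence and rewriting the L\'evy--It\^o decomposition without compensation, both of which are standard consequences of the finite-variation hypothesis on the jumps.
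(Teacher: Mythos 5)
Your proof is correct and follows essentially the same route as the paper: realise $\mu$ as $\law(L_1)$, use the Lévy--Itô decomposition under the finite-variation hypothesis to write $L_1=\gamma^0+B_1+J_1$, and observe that $0$ lies in the support of the non-drift part. The only difference is that you make explicit (via the big-jump/small-jump splitting and the compensation formula) what the paper dismisses as ``obviously in the support,'' which is a welcome amount of extra detail rather than a deviation in method.
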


\begin{proof}
Let $L=(L_t)_{t\geq 0}$ be a L\'evy process such that $\mu = \law(L_1)$. By the L\'evy--It\^o decomposition we can write $L_t = B_t + \gamma^0 t + \sum_{0<s\leq t} \Delta L_s$ for each $t\geq 0$, where $B=(B_t)_{t\geq 0}$ is a Brownian motion with drift 0 and covariance matrix $A$, $\gamma^0$ denotes the drift of $L$ (i.e. of $\law(L_1)$) and $\Delta L_s$ denotes the jump size of $L$ at $s$. Since the three components are independent and 0 is obviously in the support of both $\law(B_t)$ and of $\law(\sum_{0<s\leq t} \Delta L_s)$ we conclude that $\gamma^0 t \in \supp (\law(L_t))$ for each $t\geq 0$.
\end{proof}

With the aid of Theorem \ref{thm-rocha} and Lemma \ref{lem-drift} we can now obtain the desired generalisation for multivariate quasi-infinitely divisible distributions that are supported in cones.

\begin{theorem}
Let $\mu$ be a quasi-infinitely divisible distribution on $\bR^d$ with characteristic triplet $(A, \nu, \gamma)$ and let $K$ be a cone.
Then the following statements are equivalent:
\begin{enumerate}
\item $\supp (\mu) \subset b + K$ for some $b \in \bR^d$, $\supp (\nu^-) \subset K$ and $\int_{|x|\leq 1} |x| \, \nu^-(\dx) < \infty$.
\item $A=0$, $\supp (\nu^+) \subset K$ and $\int_{|x|\leq 1} |x| \nu^+(\dx) < \infty$.
\end{enumerate}
If the equivalent conditions (i) and (ii) are satisfied, then $\supp (\mu) \subset K$ if and only if the drift of $\mu$ lies in $K$.
\end{theorem}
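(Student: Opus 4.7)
The plan is to base the entire argument on the canonical infinitely divisible decomposition $\mu \ast \mu_2 = \mu_1$, where $\mu_1, \mu_2$ are infinitely divisible with standard triplets $(A, \nu^+, \gamma)$ and $(0, \nu^-, 0)$ respectively, and to reduce the quasi-infinitely divisible statement to the infinitely divisible result Theorem \ref{thm-rocha} together with Lemma \ref{lem-drift}. Direction (i)$\Rightarrow$(ii) is routine: under (i), shifting $\mu_2$ by its drift $\gamma_2^0 = -\int_{|x|\leq 1} x\,\nu^-(\di x)$, Theorem \ref{thm-rocha} applied to $\mu_2 \ast \delta_{-\gamma_2^0}$ yields $\supp(\mu_2) \subset \gamma_2^0 + K$. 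Convolving with $\supp(\mu) \subset b+K$ and using $K + K = K$ (which holds because $K$ is a closed convex cone containing $0$) gives $\supp(\mu_1) \subset (b + \gamma_2^0) + K$. Another application of Theorem \ref{thm-rocha} to the shift $\mu_1 \ast \delta_{-(b+\gamma_2^0)}$ reads off $A=0$, $\supp(\nu^+) \subset K$ and $\int_{|x|\leq 1} |x|\,\nu^+(\di x) < \infty$.

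For (ii)$\Rightarrow$(i) the plan is to reduce to the one-dimensional result \cite[Prop.~5.1]{lindner} by projecting along $y \in \mathrm{int}(K^{\ast})$, where $K^{\ast} = \{y \in \bR^d : \langle y, x\rangle \geq 0 \text{ for all } x \in K\}$ has nonempty interior because $K$ contains no line through the origin. By Lemma \ref{projection}, $\law(\pi_y(X))$ with $\pi_y(x) = \langle y, x\rangle$ is quasi-infinitely divisible with Gaussian variance $\langle y, Ay\rangle = 0$ and quasi-L\'evy measure $\nu_y = \pi_{y,\ast}(\nu^+) - \pi_{y,\ast}(\nu^-)$. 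Since $\supp(\nu^+) \subset K$ and $y \in K^{\ast}$, $\pi_{y,\ast}(\nu^+)$ lives in $[0,\infty)$, and a short estimate bounds $\int_{|t|\leq 1}|t|\,\pi_{y,\ast}(\nu^+)(\di t)$ by $|y|\int_{|x|\leq 1}|x|\,\nu^+(\di x) + \nu^+(\{|x|>1\}) < \infty$. By the Jordan decomposition $(\nu_y)^+ \leq \pi_{y,\ast}(\nu^+)$, so the assumptions of the one-dimensional theorem \cite[Prop.~5.1]{lindner} are verified. Its conclusion gives $\pi_y(X)$ bounded below, $\supp((\nu_y)^-) \subset [0,\infty)$ and $\int_{|t|\leq 1}|t|\,(\nu_y)^-(\di t) < \infty$. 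On $(-\infty, 0)$ the measure $\pi_{y,\ast}(\nu^+)$ vanishes, so $(\nu_y)^-|_{(-\infty,0)} = \pi_{y,\ast}(\nu^-)|_{(-\infty,0)}$, whence $\nu^-(\{x : \langle y, x\rangle < 0\}) = 0$ for every $y \in \mathrm{int}(K^{\ast})$, and the bipolar identity $K^{\ast\ast} = K$ forces $\supp(\nu^-) \subset K$.

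The main obstacle is upgrading this to $\int_{|x|\leq 1}|x|\,\nu^-(\di x) < \infty$, because $(\nu_y)^-$ can be strictly smaller than $\pi_{y,\ast}(\nu^-)$ on $[0,\infty)$ and the one-dimensional conclusion does not directly control the latter. My plan is to revisit the convolution $\pi_y(X) + \pi_y(Y) = \pi_y(Z)$ on a joint probability space with $X \perp Y$, $Z \sim \mu_1$. The infinitely divisible half already yields $\pi_y(Z) \geq \langle y, \gamma_1^0\rangle$ almost surely, and the previous step gives $\pi_y(X)$ bounded below. If $\pi_y(Y)$ were not bounded below then $P(\pi_y(Y) < -M) > 0$ for every $M>0$; on $\{\pi_y(Y) < -M\} \cap \{\pi_y(X) \leq \langle y,\gamma_1^0\rangle + M\}$ one would have $\pi_y(Z) < \langle y,\gamma_1^0\rangle$, so this event has probability zero, and independence forces $P(\pi_y(X) > \langle y,\gamma_1^0\rangle + M) = 1$ for every $M$, which is impossible. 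Hence $\pi_y(Y)$ is bounded below; since $\law(\pi_y(Y))$ is infinitely divisible with Gaussian variance $0$ and L\'evy measure $\pi_{y,\ast}(\nu^-) \subset [0,\infty)$, the classical one-dimensional criterion \cite[Thm.~24.7]{sato} gives $\int_{|t|\leq 1}|t|\,\pi_{y,\ast}(\nu^-)(\di t) < \infty$. Choosing $y \in \mathrm{int}(K^{\ast})$ with $\langle y, x\rangle \geq c|x|$ on $K$ (possible by compactness of $K \cap S^{d-1}$) and splitting into $\{|x|\leq 1/|y|\}$ and $\{1/|y| < |x| \leq 1\}$, together with the finiteness of $\nu^-$ away from $0$, yields $\int_{|x|\leq 1}|x|\,\nu^-(\di x) < \infty$.

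With the full (ii)-conditions on $\nu^-$ in hand, Theorem \ref{thm-rocha} applied to the shift $\mu_2 \ast \delta_{-\gamma_2^0}$ gives $\supp(\mu_2) \subset \gamma_2^0 + K$, and Lemma \ref{lem-drift} gives $\gamma_2^0 \in \supp(\mu_2)$. For any $x \in \supp(\mu)$ it then follows that $x + \gamma_2^0 \in \supp(\mu) + \supp(\mu_2) \subset \supp(\mu_1) \subset \gamma_1^0 + K$, so $\supp(\mu) \subset (\gamma_1^0 - \gamma_2^0) + K = \gamma^0 + K$, where $\gamma^0 = \gamma_1^0 - \gamma_2^0$ is readily identified as the drift of $\mu$. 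This is (i) with $b = \gamma^0$. The final equivalence now reads off at once: if $\gamma^0 \in K$ then $\supp(\mu) \subset \gamma^0 + K \subset K + K = K$; conversely, if $\supp(\mu) \subset K$ then $\supp(\mu_1) \subset K + (\gamma_2^0 + K) = \gamma_2^0 + K$, so $\mu_1 \ast \delta_{-\gamma_2^0}$ is infinitely divisible with support in $K$, and Theorem \ref{thm-rocha} forces its drift $\gamma_1^0 - \gamma_2^0 = \gamma^0$ to lie in $K$.
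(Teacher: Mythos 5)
Your proposal is correct, and the direction (i)$\Rightarrow$(ii) coincides with the paper's argument. For the harder direction (ii)$\Rightarrow$(i), however, you take a genuinely different and considerably longer route. The paper stays entirely in $\bR^d$: from $\mu_2 \ast \mu = \mu_1$ and Sato's Lemma 24.1 one has $\supp(\mu_2) + \supp(\mu) \subset \supp(\mu_1) \subset b_1 + K$, so fixing a single point $v \in \supp(\mu)$ shows that the translate $\mu_2 \ast \delta_{-b_1+v}$ is supported in $K$; applying Theorem \ref{thm-rocha} to this infinitely divisible translate delivers $\supp(\nu^-) \subset K$ and $\int_{|x|\leq 1}|x|\,\nu^-(\di x) < \infty$ in one stroke, and fixing a point $u \in \supp(\mu_2)$ gives $\supp(\mu) \subset b_1 - u + K$. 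You instead project onto directions $y$ in the interior of the dual cone, invoke the one-dimensional result, and then must work around the genuine cancellation problem that $(\nu_y)^-$ may be strictly smaller than the pushforward of $\nu^-$; your fix --- deducing boundedness below of $\langle y, Y\rangle$ from that of $\langle y, Z\rangle$ and the finiteness of $\langle y, X\rangle$, then applying Sato's Theorem 24.7 to the infinitely divisible law of $\langle y, Y\rangle$ and integrating the resulting bound over a direction with $\langle y, x\rangle \geq c|x|$ on $K$ --- is correct, and is essentially a scalar shadow of the support-of-convolution argument the paper runs directly in $\bR^d$. (Your first appeal to the one-dimensional quasi-infinitely divisible result is in fact redundant: everything you need about $\nu^-$ already follows from the boundedness-below argument for $\langle y, Y\rangle$.) The paper's route is cleaner and avoids the dual-cone and Jordan-decomposition technicalities; yours illustrates that a Cram\'er--Wold-type reduction to dimension one is also viable. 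The identification of the optimal translate with the drift and the final equivalence are handled the same way in both proofs.
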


\begin{proof}
Let $\mu_1$ and $\mu_2$ be infinitely divisible distributions with characteristic triplets and $(A, \nu^+, \gamma)$ and $(0, \nu^-, 0)$ , respectively, so that $\mu_1 = \mu \ast \mu_2$.
To show that (i) implies (ii), suppose that $\supp (\mu) \subset b + K$ for some $b \in \bR^d$, $\supp ( \nu^-)\subset K$ and $\int_{|x|\leq 1} |x| \nu^-(\dx) < \infty$.
Then $\mu \ast \delta_{-b}$ is supported in $K$ and denoting the drift of $\mu_2$ by $b_2 \coloneqq - \int_{|x| \leq 1} x \nu^-(\dx)$, the distribution $\mu_2 \ast \delta_{-b_2}$ is infinitely divisible with characteristic triplet $(0, \nu^-, -b_2)$ and has drift 0.
By Theorem \ref{thm-rocha}, $\mu_2 \ast \delta_{-b_2}$ is supported in $K$ as well, hence also $\mu_2 \ast \mu \ast \delta_{-b-b_2} = \mu_1 \ast \delta_{-b-b_2}$ is supported in $K$.
Using Theorem \ref{thm-rocha} again, since $\mu_1\ast \delta_{-b-b_2}$ is infinitely divisible with characteristic triplet $(A, \nu^+, \gamma-b-b_2)$, it follows that $A= 0$, $\supp(\nu^+) \subset K$ and $\int_{|x| \leq 1} |x| \, \nu^+(\dx) < \infty$.
Moreover, it follows that the drift of $\mu_1 \ast \delta_{-b-b_2}$ is an element of $K$, that is,
\begin{align} \label{support1}
\gamma - b - b_2 - \int_{|x| \leq 1} |x| \nu^+(\dx) \in K.
\end{align}
For the other direction, suppose that (ii) is satisfied and let $b_1 \coloneqq \gamma - \int_{|x| \leq 1} x \, \nu^+(\dx)$ denote the drift of $\mu_1$.
By Theorem \ref{thm-rocha}, the infinitely divisible distribution $\mu_{1} \ast \delta_{-b_1}$ is supported in $K$.
Using \cite[Lem. 24.1]{sato} we obtain $\supp (\mu_2) + \supp (\mu) \subset b_1 + \supp (\mu_1 \ast \delta_{-b_1}) \subset b_1 + K$.
Choosing arbitrary elements $u \in \supp (\mu_2)$ and $v \in \supp (\mu)$, we conclude
\begin{align} \label{support2}
-b_1 + u + \supp (\mu)
\subset -b_1 + \supp (\mu_2) + \supp (\mu_1)
\subset K,
\end{align}
and similarly $\supp (\mu_2 \ast \delta_{-b_1+v}) = -b_1 + v + \supp (\mu_2)  \subset K$.
It follows that $\supp (\mu) \subset b_1-u+K$ and by Theorem \ref{thm-rocha}, since $\mu_2 \ast \delta_{-b_1 + v}$ is infinitely divisible with characteristic triplet $(0, \nu^-, -b_1+v)$, we have that $\supp (\nu^-) \subset K$ and $\int_{|x| \leq 1} |x| \nu^-(\dx) < \infty$.

Now suppose that both (i) and (ii) are satisfied.
With the notations above, note that the drift of $\mu$ is $b_1-b_2$.
If $\mu$ is supported in $K$, then in (i) we can choose $b = 0$ and \eqref{support1} implies that $b_1-b_2 \in K$.
Conversely, if the drift $b_1-b_2$ of $\mu$ is in $K$,  then by Lemma \ref{lem-drift} we can choose  $u = b_2$ and observe from \eqref{support2} that
$\supp (\mu) \subset K + (b_1- b_2) \subset K$, finishing the proof.
\end{proof}

\section{Moments} \label{S8}
\setcounter{equation}{0}

In this section we study the finiteness of the $h$-moment of quasi-infinitely divisible distributions in the case of a submultiplicative function $h$.
Recall that a function $h: \bR^d \to \bR$ is \emph{submultiplicative} if it is non-negative and there exists a constant $C > 0$ such that
\begin{align*}
h(x+y) \leq Ch(x)h(y)
\quad \textrm{for all } x,y \in \bR^d.
\end{align*}
Given an infinitely divisible distribution $\mu$ and a locally bounded submultiplicative and measurable function $h$ on $\bR^d$, $\mu$ has finite $h$-moment if and only if the L\'evy measure $\nu$ of $\mu$ restricted to $\{x \in \bR^d: |x|>1\}$ has finite $h$-moment, i.e. $\int_{\bR^d} h(x) \, \mu(\di x) < \infty$ if and only if $\int_{|x|>1} h(x) \, \nu(\di x) < \infty$, see \cite[Thm. 25.3]{sato}. This does not generalise to quasi-infinitely divisible distributions in the sense that $\int_{\bR^d} h(x) \, \mu(\di x) < \infty$ is equivalent to $\int_{|x|>1} h(x) \, \nu^{\pm}(\di x) < \infty$ (see Example \ref{ex-moments} below), but it generalises in the sense that $\int_{|x|>1} h(x) \, \nu^{+}(\di x) < \infty$ is finite if and only if both $\int_{\bR^d} h(x) \, \mu(\di x)$ and $\int_{|x|>1} h(x) \, \nu^-(\di x)$ are finite. For univariate distributions this was shown in \cite[Thm. 6.2]{lindner}. The proof given there easily generalises to multivariate distributions. We have:

\begin{theorem}
Let $\mu$ be a quasi-infinitely divisible distribution on $\bR^d$ with standard characteristic triplet $(A,\nu,\gamma)$ and let $h: \bR^d \to [0, \infty)$ be a submultiplicative, locally bounded and measurable function.
\begin{enumerate}
\item[\textnormal{(a)}]
Then $(\nu^+)_{| \{x \in \bR^d: |x| > 1\}}$ has finite $h$-moment if and only if both
$\mu$ and $(\nu^-)_{| \{x \in \bR^d: |x| > 1\}}$ have finite $h$-moment, i.e. $\int_{|x|>1} h(x) \, \nu^+(\di x) < \infty$ if and only if $\int_{\bR^d} h(x) \, \mu(\di x) + \int_{|x|>1} h(x) \, \nu^-(\di x) < \infty$.
\item[\textnormal{(b)}]  Let $X$ be a random vector in $\bR^d$ with distribution $\mu$.
Then the following are true:
\begin{enumerate}
    \item[\textnormal{(i)}]  If $\int_{|x|>1} |x| \nu^+(\dx) < \infty$, then the expectation $\bE(X)$ of $X$ exists and is given by
    \begin{align*}
        \bE(X)
        = \gamma + \int_{|x|> 1} x \nu(\dx) = \gamma_m,
    \end{align*}
    which is  the center of $\mu$ as defined in Remark \ref{mean}.
    \item[\textnormal{(ii)}]  If $\int_{|x|>1} |x|^2 \nu^+(\dx) < \infty$, then $X$ has finite second moment and the covariance matrix $\textrm{\rm Cov}(X) \in \bR^{d\times d}$ of $X$ is given by
    \begin{align*}
        \textrm{\rm Cov}(X)
        = A + \int_{\bR^d} xx^{T} \nu(\dx).
    \end{align*}
    \item[\textnormal{(iii)}]  If $\int_{|x|>1}e^{\langle \alpha, x\rangle}\nu^+(\dx) < \infty$ for some $\alpha \in \bR^d$, then $\bE (\re^{\langle \alpha, X\rangle}) < \infty$ and
    \begin{align*}
        \bE(\re^{\langle \alpha, X \rangle})
        = \exp \left(
            \langle \alpha, \gamma \rangle
            + \frac{1}{2} \langle \alpha, A \alpha \rangle
            + \int_{\bR^d} \left(
                \re^{\langle \alpha, x \rangle}-1
                -\langle \alpha, x \rangle \1_{[0,1]}(|x|)
            \right) \nu(\dx)
        \right).
    \end{align*}
\end{enumerate}
\end{enumerate}
\end{theorem}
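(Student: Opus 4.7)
The plan is to reduce everything to the classical result for infinitely divisible distributions, namely \cite[Thm. 25.3]{sato}, by exploiting the relation $\mu \ast \mu_2 = \mu_1$ where $\mu_1$ and $\mu_2$ are infinitely divisible with L\'evy measures $\nu^+$ and $\nu^-$ respectively (as constructed in the proof of Theorem \ref{lk}). The key auxiliary tool will be the following convolution lemma: if $\mu, \mu_2$ are probability measures on $\bR^d$, $\mu_1 = \mu \ast \mu_2$, and $h$ is a locally bounded, measurable submultiplicative function with $h>0$ (a harmless reduction since $h(0)>0$ for any nontrivial submultiplicative $h$), then $\int h \, d\mu_1 < \infty$ if and only if both $\int h \, d\mu < \infty$ and $\int h\, d\mu_2<\infty$.

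The $(\Leftarrow)$ direction of this convolution lemma follows immediately from Fubini and submultiplicativity: $\int h\, d\mu_1 = \int\!\!\int h(x+y)\mu(dx)\mu_2(dy) \leq C\int h\, d\mu\int h\, d\mu_2$. For the harder $(\Rightarrow)$ direction, I would pick $y_0 \in \supp(\mu_2)$ and a radius $r>0$ so that $\mu_2(B(y_0,r))>0$ and $M := \sup_{y\in B(y_0,r)} h(-y) < \infty$ by local boundedness; applying $h(x) \leq C h(x+y) h(-y) \leq CM\, h(x+y)$ for $y\in B(y_0,r)$ and integrating with respect to $\mu(dx)\mu_2(dy)|_{B(y_0,r)}$ gives $\mu_2(B(y_0,r))\int h\, d\mu \leq CM \int h\, d\mu_1 < \infty$; the symmetric argument, using some $x_0 \in \supp(\mu)$, handles $\int h\, d\mu_2$. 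Combining this lemma with \cite[Thm. 25.3]{sato} applied to $\mu_1$ and $\mu_2$ individually (which characterises $\int h\, d\mu_i<\infty$ by $\int_{|x|>1} h\, d\nu^{\pm} < \infty$) yields part (a) at once; note in particular that $\int_{|x|>1} h\, d\nu^+ < \infty$ automatically forces $\int_{|x|>1} h\, d\nu^-<\infty$, so $\nu$ itself has finite $h$-moment on $\{|x|>1\}$.

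Part (b) is then a matter of applying (a) with appropriate submultiplicative $h$ and computing moments via the characteristic function. For (i) I would take $h(x)=1+|x|$, obtaining $\int|x|\, \mu(dx)<\infty$ together with $\int_{|x|>1}|x|\, |\nu|(dx)<\infty$ so that the center representation \eqref{lkmean} is available; then $E(X) = -\ri \nabla \widehat{\mu}(0) = \gamma_m$ follows by differentiating $\widehat{\mu}(z)=e^{\Psi_\mu(z)}$ at $z=0$, with dominated convergence justifying the interchange of derivative and integral in the $\nu$-integral. For (ii) I would take $h(x)=(1+|x|)^2$; a second differentiation at $0$, using $\widehat{\mu}(z) = e^{\Psi_\mu(z)}$ and the fact that $\partial_{z_i}\partial_{z_j}\Psi_\mu(0) = -A_{ij} - \int x_ix_j\, \nu(dx)$ while $\partial_{z_i}\Psi_\mu(0) = \ii \gamma_{m,i} = \ii E(X_i)$, gives the stated covariance formula after subtracting $E(X_i)E(X_j)$. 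For (iii), the function $h(x)=e^{\langle \alpha, x\rangle}$ is exactly multiplicative, so (a) immediately gives $E(e^{\langle \alpha, X\rangle})<\infty$; the explicit formula is then obtained by extending the L\'evy--Khintchine identity analytically in $z$ from $\bR^d$ to the complex shift $z = w - \ii \alpha$, $w\in \bR^d$, with the extension on both sides (the Laplace transform of $\mu$ on the one hand, and $\exp$ of the L\'evy--Khintchine integrand on the other) being well-defined and analytic thanks to the moment bound established via~(a).

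The main obstacle is the $(\Rightarrow)$ direction of the convolution lemma: submultiplicativity gives only an upper bound on $h(x+y)$, not a lower one, so one must invoke the positivity of $\mu_2$ on a ball around a support point and exploit local boundedness of $h$ on the \emph{reflected} ball $-B(y_0,r)$. Everything else either follows from the standard infinite-divisibility theory or amounts to routine differentiation under the integral sign, for which the moment bounds from part (a) provide the required integrable majorants.
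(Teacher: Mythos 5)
Your proposal is correct and follows essentially the same route as the paper, which simply defers to the one-dimensional proof of \cite[Thm. 6.2]{lindner}: that proof likewise writes $\mu \ast \mu_2 = \mu_1$ with $\mu_1 \sim \textnormal{ID}(A,\nu^+,\gamma)$ and $\mu_2 \sim \textnormal{ID}(0,\nu^-,0)$, invokes \cite[Thm.~25.3]{sato} for each factor, and uses exactly the convolution lemma you prove (whose hard direction, via $h(x) \leq C h(x+y) h(-y)$ integrated over a ball of positive $\mu_2$-mass, is the standard argument from Sato's Section 25). Your treatment of part (b) by differentiation of $\widehat{\mu} = \re^{\Psi_\mu}$ and analytic continuation for (iii) also matches the intended argument.
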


\begin{proof} The proof given in \cite[Thm. 6.2]{lindner} carries over word by word to the multivariate setting.
\end{proof}

As shown in \cite[Ex. 6.3]{lindner}, for univariate quasi-infinitely divisible distributions it is not true that finiteness of $\int_\bR h(x) \, \mu(\di x)$ implies finiteness of $\int_{|x|>1} h(x) \, \nu^+(\di x)$. Let us give another but simpler example of this phenomenon and also remark on the multivariate setting:

\begin{example} \label{ex-moments}
(a) Let $p\in (0,1/2)$. By  Theorem \ref{cuppens}, the Bernoulli distribution $b(1,p)$ on $\bR$ is quasi-infinitely divisible with quasi-L\'evy measure
$\nu = \sum_{k=1}^{\infty} \frac{(-1)^{k+1}}{k} \left( \frac{p}{1-p} \right)^{k} \delta_{k}$.
Especially, we obtain  $\nu^+ = \sum_{k=0}^{\infty} \frac{1}{2k+1} \left( \frac{p}{1-p} \right)^{2k+1} \delta_{2k+1}$.
Let $c > -\log \frac{p}{1-p} >0$.
The function $g: \bR \to \bR$, $x \mapsto e^{cx}$ is submultiplicative and by  the monotone convergence theorem it holds
\begin{align*}
    \int_{|x|> 1} \re^{cx} \nu^+(\dx)
    &= \sum_{k=1}^{\infty} \frac{1}{2k+1} \left( \frac{p}{1-p} \right)^{2k+1} \re^{(2k+1)c} = \infty,
\end{align*}
since $\frac{p}{1-p} \re^{c} > 1$.
Similarly, $\int_{|x|> 1} \re^{cx} \nu^-(\dx) = \infty$, although the Bernoulli-distribution $b(1,p)$ has finite $g$-moment.\\
(b) Let $p\in (0,1/2)$ and $X_1,\ldots, X_d$ be independent $b(1,p)$-distributed and define $X=(X_1,\ldots, X_d)^T$. By Proposition \ref{prop-indep}, $\law(X)$ is a quasi-infinitely divisible distribution on $\bR^d$ whose quasi-L\'evy measure $\widetilde{\nu}$ is concentrated on the axes. Let $c>-\log \frac{p}{1-p}$ and consider the function $h:\bR^d \to [0,\infty)$ given by $h(x) = \re^{\langle c e_1,x\rangle}$, where $e_1$ is the first unit vector in $\bR^d$. Then $h$ is submultiplicative and $\int_{|x|>1} h(x) \, \widetilde{\nu}^{\pm} (\di x) = +\infty$ by (a), although $\law(X)$ has bounded support and hence finite  $h$-moment.
\end{example}


\end{document}